\newtheorem{theorem}{Theorem}[section]
\newtheorem{lemma}{Lemma}[section]
\newtheorem{definition}{Definition}[section]
\newtheorem{remark}{Remark}[section]
\newtheorem{corollary}{Corollary}[section]
\def\geq{\geqslant}\def\leq{\leqslant}
\date{}
\begin{document}

\title{\bf Generalized Campanato Space Over Non-homogeneous\\ Space and Its Applications\\[10pt]}
\author{Yuxun Zhang, Jiang Zhou\thanks{Corresponding author. The research was supported by National Natural Science Foundation of China (Grant No. 12061069).}  \\[.5cm]}

\maketitle
{\bf Abstract: \rm The authors introduce generalized Campanato space with regularized condition over non-homogeneous space, and study its basic properties including the John-Nirenberg inequality and equivalent characterizations. As applications, the boundedness of fractional type Marcinkiewicz integral operator and its commutator on generalized Morrey space over non-homogeneous space is obtained.

{}\par

{\bf Key Words:\ \rm Generalized Campanato space; non-homogeneous space; John-Nirenberg inequality; Marcinkiewicz integral; commutator} 

{\bf Mathematics Subject Classification(2020): \rm42B35; 42B20; 42B25; 30L15}

\baselineskip 16.3pt

\section{Introduction}
In 1961, John and Nirenberg\cite{Joh61} first introduced the space $\mathrm{BMO}(\mathbb{R}^n)$. A function $f\in L^1_{loc}$ is in $\mathrm{BMO}(\mathbb{R}^n)$ if
\begin{equation}
\|f\|_{\mathrm{BMO}(\mathbb{R}^n)}:=\sup_{B}\fint_B|f(x)-f_B|dx<\infty,
\end{equation}
where the supremum is over all balls $B\subset\mathbb{R}^n$, $f_B:=\fint_Bf(y)dy$ denotes the mean value of $f$ on $B$.

Campanato\cite{Cam63, Cam64} introduced Campanato space and researched its fundamental natures in 1963 and 1964. Since then, for applications to the regularity of solutions of partial differential equations (e.g. in\cite{Gia83}), Campanato space has been further studied and gradually generalized by many authors, see\cite{Pee69,Nak85,Nak06}.

In 1985, Nakai and Yabuta\cite{Nak85} introduced the generalized Campanato space over $\mathbb{R}^n$ as follow: For $1\leq p<\infty$ and a measurable function $\varphi:\mathbb{R}^n\times(0,\infty)\rightarrow(0,\infty)$, the generalized Campanato space $\mathcal{L}^{p,\varphi}(\mathbb{R}^n)$ is consisted of all $f\in L_{loc}^1$ such that
\begin{equation}
\|f\|_{\mathcal{L}^{p,\varphi}(\mathbb{R}^n)}:=\sup_B\left(\frac{1}{\varphi(B)}\fint_B|f(x)-f_B|^pdx\right)^{\frac{1}{p}}<\infty,
\end{equation}
where $\varphi(B):=\varphi(x,r)$ for ball $B=B(x,r)\subset\mathbb{R}^n$. Note that if $\varphi(B)=1$ for all balls $B$, the condition (2) will equal to (1)\cite{Joh61}, so $\mathcal{L}^{p,\varphi}(\mathbb{R}^n)$ is surely the generalization of $\mathrm{BMO}(\mathbb{R}^n)$.

In 2010, in order to incorporate some spaces whose measures dissatisfies the doubling condition into a unified framework for studying, Hyt\"{o}nen\cite{Hyt10} introduced a new type of metric measure spaces called non-homogeneous space, which satisfy the geometrically doubling and upper doubling conditions. Many works about function space and operator theory over $\mathbb{R}^n$ have been generalized to non-homogeneous space, see\cite{Cao13,Fu14,Lu17}.

Hyt\"{o}nen\cite{Hyt10} also introduced $\mathrm{RBMO}(\mu)$ space over non-homogeneous space, which is the analogue of $\mathrm{BMO}(\mathbb{R}^n)$ with an additional regularized condition. In 2014, Fu, Yang and Yang\cite{Fu14} introduced $\widetilde{\mathrm{RBMO}}(\mu)$ space over non-homogeneous space via the discrete coefficient, and obtained the boundedness of Calder\'{o}n-Zygmund operators on $\widetilde{\mathrm{RBMO}}(\mu)$. Subsequently, some results for the commutators generated by several operators and $\widetilde{\mathrm{RBMO}}(\mu)$ functions are obtained, for example, in\cite{Lin17,Lu21,Lu22}. Fu, Lin, Yang and Yang\cite{Fu15} introduced Campanato space over non-homogeneous space in 2015, which is the generalization of $\widetilde{\mathrm{RBMO}}(\mu)$.

In this article, we naturally introduce the generalized Campanato space over non-homogeneous space, and obtain some of their properties. Meanwhile, considering that the boundedness of operators is an important issue in function space theory (e.g. in\cite{Dai23}), as applications, the boundedness of Marcinkiewicz integral operator $\widetilde{\mathcal{M}}_{l, \rho, s}$ and its commutator $\widetilde{\mathcal{M}}_{l, \rho, s, b}$ from generalized Morrey space $L^{p,\phi}$ to $L^{q,\phi}$ is obtained, where $b$ is in generalized Campanato space. These results extend some theorems in\cite{Lu22} that showed the boundedness of $\widetilde{\mathcal{M}}_{l, \rho, s}$ and $\widetilde{\mathcal{M}}_{l, \rho, s, b}$ on $L^{p,\phi}$, and in\cite{Ku22} that studied the boundedness of these operators over $\mathbb{R}^n$.

Throughout this paper, we use $(\mathcal{X},d,\mu)$ to denote a non-homogeneous space unless there exists a contrary description, use $L^p_{loc}$ to denote the set of all $p$-th locally integrable functions for $p\in[1,\infty)$, use $L_{c}^{\infty}$ to denote the set of all $L^{\infty}$ functions supported on a certain ball, and use $\chi_{E}$ to denote the characteristic function of $E\subset\mathcal{X}$. We use $\mathbb{Z}$ to represent the set of all integers, and $\mathbb{N}:=\mathbb{Z}\cap[0,\infty)$. For any ball $B\subset\mathcal{X}$, $c_B$ and $r_B$ denotes the centre and radius of $B$, respectively, and for $\lambda>0$, $\lambda B:=B(c_B,\lambda r_B)$. For $\phi:\mathcal{X}\times(0,\infty)\rightarrow(0,\infty)$, $\phi(B)$ denotes $\phi(c_B,r_B)$. For any $p \in(1, \infty)$, we denote $p':=p/(p-1)$. Use $A\lesssim B$ to show $A\leq CB$, where $C>0$ is independent of the main parameters, and use $A\approx B$ to show $A\lesssim B$ and $B\lesssim A$. For some parameters $\alpha_1,\alpha_2,\cdots,\alpha_n$, $C_{(\alpha_1,\alpha_2,\cdots,\alpha_n)}$ or $c_{(\alpha_1,\alpha_2,\cdots,\alpha_n)}$ denotes a positive constant only dependent on $\mathcal{X}$ and $\alpha_1,\alpha_2,\cdots,\alpha_n$. For ball $B\subset\mathcal{X}$ and $f\in L^1_{loc}$, $m_{B}(f)$ is coincide with $f_B$.

\section{Preliminaries}

In this section, we give some basic definitions and lemmas. First, we recall some concepts about non-homogeneous space introduced by Hyt\"{o}nen.

\begin{definition}\cite{Hyt10}
A metric space $(\mathcal{X}, d)$ satisfies the geometrically doubling condition if there exists $N_{0}\in \mathbb{N}$ such that, any ball $B(x, r) \subset \mathcal{X}$ can be covered by at most $N_0$ balls $\left\{B\left(x_{i}, r / 2\right)\right\}_{i}$.
\end{definition}

\begin{definition}\cite{Hyt10}
A metric space $(\mathcal{X}, d)$ satisfies the upper doubling condition if $\mu$ is a Borel measure on $\mathcal{X}$, and there exists $\lambda: \mathcal{X} \times(0, \infty) \rightarrow(0, \infty)$ satisfying that $r \rightarrow \lambda(x, r)$ is non-decreasing for given $x \in \mathcal{X}$, and for all $x \in \mathcal{X}$, $r \in(0, \infty)$,
$$\mu(B(x, r)) \leq \lambda(x, r) \leq C_{(\lambda)} \lambda(x, r / 2) .$$
\end{definition}

\begin{remark}
In\cite{Hyt12}, the authors obtain that there exists $\widetilde{\lambda}$ pointwise controlled by $\lambda$, which makes $C_{(\widetilde{\lambda})}\leq C_{(\lambda)}$, and for all $x,y\in\mathcal{X}$ with $d(x,y)\leq r_0$,
\begin{equation}
\widetilde{\lambda}(x,r_0)\leq C_{(\lambda)}\widetilde{\lambda}(y,r_0).
\end{equation}
Therefore, we always assume that $\lambda$ satisfies (3).
\end{remark}

The following is the definition of discrete coefficient.

\begin{definition}\cite{Bui13}
Let $\tau \in(1, \infty)$, balls $B \subset S\subset\mathcal{X}$. Define
$$\widetilde{K}_{B, S}^{(\tau)}=1+\sum_{k=-\lfloor\log _{\tau} 2\rfloor}^{N_{B, S}^{(\tau)}} \frac{\mu(\tau^{k} B)}{\lambda(c_{B}, \tau^{k} r_{B})},$$
where $N_{B, S}^{(\tau)}$ is the smallest integer which makes $\tau^{N_{B, S}^{(\tau)}} r_{B} \geqslant r_{S}$, and $\lfloor x\rfloor$ denotes the greatest integer not more than $x$ for $x \in \mathbb{R}$.
\end{definition}

Though the doubling condition does not always hold for all balls $B\subset\mathcal{X}$, there still exist some balls satisfying the following property.

\begin{definition}\cite{Hyt10}
For $1<\alpha, \beta <\infty$, a ball $B \subset \mathcal{X}$ is called $(\alpha, \beta)$-doubling if $\mu(\alpha B) \leq \beta \mu(B)$.
\end{definition}

\begin{remark}
From \cite[Lemma 3.2]{Hyt10} and \cite[Lemma 3.3]{Hyt10}, set $\nu=\log _{2} C_{(\lambda)}$ and $n_{0}=\log _{2} N_{0}$, where $N_{0}$ is defined in Definition 2.1, for any $1<\alpha<\infty$ and ball $B\subset\mathcal{X}$, the smallest $(\alpha, \beta_{\alpha})$-doubling ball $\alpha^{i} B$ with $i \in \mathbb{N}$ exists, which is denoted by $\widetilde{B}^{\alpha}$, where
$$\beta_{\alpha}:=\alpha^{\max \left\{n_{0}, \nu\right\}}+30^{n_{0}}+30^{\nu} .$$
To simplify writing, we use $(\alpha,\beta)$-doubling ball to denote $(\alpha,\beta_{\alpha})$-doubling ball, and the doubling ball means the $(6,\beta_6)$-doubling ball.
\end{remark}

Next, to obtain the boundedness of certain operators, we introduce the set $\mathcal{G}_{\delta}^{dec }$ as the subset of $\mathcal{G}^{d e c}$ defined in\cite{Ara18}.

\begin{definition}
For $\delta \in(0,1)$, let $\mathcal{G}_{\delta}^{dec}$ be the set of all functions $\phi: \mathcal{X} \times(0, \infty) \rightarrow(0, \infty)$ such that for any $x\in\mathcal{X}$,
\begin{equation}
\lim_{r \rightarrow 0^{+}} \phi(x, r)=+\infty,\ \ \lim_{r \rightarrow+\infty} \phi(x, r)=0,
\end{equation}
and for any $\eta>1$ and balls $B_1\subset B_2$,
$$\phi(B_1)(\mu(\eta B_1))^{\delta} \geq c_{(\phi,\eta)}(B_2)(\mu(\eta B_2))^{\delta},\ \ \phi(B_1) \mu(\eta B_1) \leq C_{(\phi,\eta)} \phi(B_2) \mu(\eta B_2).$$
\end{definition}

\begin{remark}
It follows from\cite[Proposation 3.4]{Nak08} that, if $\phi \in \mathcal{G}_{\delta}^{d e c}$ satisfies (4), then there exists $\widetilde{\phi} \in \mathcal{G}_{\delta}^{d e c}$ equivalent to $\phi$, which is continous and strictly decreasing of $r$ for fixed $x$. Without the loss of generality, we still use $\phi$ to denote the corresponding $\widetilde{\phi}$.
\end{remark}

The following definition of generalized Morrey space is different from\cite{Lu17}, but similar to\cite{Nak94}.

\begin{definition}
Let $p \in[1, \infty)$, $\eta\in(1,\infty)$ and $\phi\in\mathcal{G}^{d e c}_{\delta}$. $f\in L^p_{loc}$ is in the generalized Morrey space $L^{p,\phi,\eta}$ if
$$\|f\|_{L^{p, \phi,\eta}}:=\sup _{B}\left(\frac{1}{\phi(B) \mu(\eta B)} \int_{B}|f(x)|^{p} d \mu(x)\right)^{\frac{1}{p}}<\infty.$$
\end{definition}

\begin{remark}
Similar to the proof in \cite[Theorem 7]{Cao13}, $L^{p, \phi,\eta}$ is independent of $\eta$. Therefore, $L^{p, \phi,\eta}$ can be written as $L^{p, \phi}$.
\end{remark}

Then, we introduce the generalized Campanato space $\widetilde{\mathcal{L}}^{\psi,\tau,\gamma}$.

\begin{definition}
Let $\tau \in(1, \infty)$ and $\gamma \in[1, \infty)$, $\psi:\mathcal{X}\times(0,\infty)\rightarrow(0,\infty)$ satisfy that, there exists $C>0$ such that for any two balls $B=B(x,r)$, $B'=B(x',r)$ with $d(x,x')\leq r$,
\begin{equation}
\psi(2B)\leq C\psi(B),\ \ \frac{1}{C}\psi(B)\leq\psi(B')\leq C\psi(B).
\end{equation}
$f\in L^1_{loc}$ is in $\widetilde{\mathcal{L}}^{\psi,\tau,\gamma}$ if there exists $C>0$ such that for any ball $B\subset\mathcal{X}$,
\begin{equation}
\frac{1}{\psi(B)} \frac{1}{\mu(\tau B)} \int_{B}\left|f(x)-f_{B}\right| d \mu(x) \leq C,
\end{equation}
and for any two balls $B \subset S$,
\begin{equation}
\frac{1}{\psi(B)}\left|f_{B}-f_{S}\right| \leq C\left(\widetilde{K}_{B, S}^{(\tau)}\right)^{\gamma}.
\end{equation}
The $\widetilde{\mathcal{L}}^{\psi,\tau,\gamma}$ norm of $f$, or $\|f\|_{\widetilde{\mathcal{L}}^{\psi,\tau,\gamma}}$, is defined as the infimum of $C>0$ satisfying (6) and (7).
\end{definition}

\begin{remark}
The following conclusions show that $\widetilde{\mathcal{L}}^{\psi,\tau,\gamma}$ is the generalization of $\mathrm{\widetilde{RBMO}(\mu)}$, the Campanato space over $(\mathcal{X},d,\mu)$, and the generalized Campanato space over $\mathbb{R}^n$.

$\mathrm{(i)}$ If $\psi(x,r)=1$, then $\widetilde{\mathcal{L}}^{\psi,\tau,\gamma}=\mathrm{\widetilde{RBMO}(\mu)}$ defined in\cite{Fu14}.

$\mathrm{(ii)}$ By Remark 2.1, for $\alpha\in[0,\infty)$, $\psi(x,r)=\lambda(x,r)^{\alpha}$ satisfies (5), then $\widetilde{\mathcal{L}}^{\psi,\tau,\gamma}=\mathcal{E}^{\alpha,1}_{\tau,\tau,\gamma}$ defined in\cite{Fu15}.

$\mathrm{(iii)}$ If $(\mathcal{X},d,\mu)=(\mathbb{R}^n,|\cdot|,m_n)$, where $m_n$ denotes the $n$-dimensional Lebesgue measure, then $\widetilde{\mathcal{L}}^{\psi,1,\gamma}=\mathcal{L}^{1,\psi}(\mathbb{R}^n)$.
\end{remark}

\begin{remark}
We will prove that $\widetilde{\mathcal{L}}^{\psi,\tau,\gamma}$ is independent of $\tau$ and $\gamma$ under a certain condition.
\end{remark}

Moreover, we recall the definition of $\theta$-type generalized Calder\'{o}n-Zygmund kernel and Marcinkiewicz integral operator. 

\begin{definition}\cite{Yab85}
Let $l \geq 0$, $\theta:(0,\infty)\rightarrow[0,\infty)$ be non-decreasing and make
$$\int_{0}^{1} \frac{\theta(t)}{t} \log\frac{1}{t}d t<\infty.$$
$K_{l, \theta}\in L_{loc}^1$ defined on $\mathcal{X}^2\backslash\{(x, x): x \in \mathcal{X}\}$ is a $\theta$-type generalized Calder\'{o}n-Zygmund kernel, if for $x,y\in\mathcal{X}$,
$$|K_{l, \theta}(x, y)| \lesssim\frac{(d(x, y))^{1+l}}{\lambda(x, d(x, y))},$$
and for $x, y,z \in \mathcal{X}$ with $d(x, y) \geq d(x, z)/2$,
$$|K_{l, \theta}(x, y)-K_{l, \theta}(z, y)|-|K_{l, \theta}(y, x)-K_{l, \theta}(y, z)| \lesssim \theta\left(\frac{d(x, z)}{d(x, y)}\right) \frac{(d(x, z))^{1+l}}{\lambda(x, d(x, y))}.$$
\end{definition}

\begin{definition}\cite{Lu22}
Let $l \geqslant 0, \rho>0$ and $s \geqslant 1$, the fractional type Marcinkiewicz integral operator $\widetilde{\mathcal{M}}_{l, \rho, s}$ with $\theta$-type generalized Calder\'{o}n-Zygmund kernel $K_{l, \theta}$ is defined by
$$\widetilde{\mathcal{M}}_{l, \rho, s}(f)(x)=\left(\int_{0}^{+\infty}\left|\frac{1}{t^{l+\rho}} \int_{d(x, y) \leq t} \frac{K_{l, \theta}(x, y)}{(d(x, y))^{1-\rho}} f(y) d \mu(y)\right|^{s} \frac{d t}{t}\right)^{\frac{1}{s}}$$
for $f \in L_{c}^{\infty}(\mu)$, $x \notin \operatorname{supp}(f)$, and the commutator $\widetilde{\mathcal{M}}_{l, \rho, s, b}$ generated by $b \in \widetilde{\mathcal{L}}^{\psi,\tau,\gamma}$ and $\widetilde{\mathcal{M}}_{l, \rho, s}$ is defined by
$$\widetilde{\mathcal{M}}_{l, \rho, s, b}(f)(x)=\left(\int_{0}^{+\infty}\left|\frac{1}{t^{l+\rho}} \int_{d(x, y) \leq t} (b(x)-b(y))\frac{K_{l, \theta}(x, y)}{(d(x, y))^{1-\rho}} f(y) d \mu(y)\right|^{s} \frac{d t}{t}\right)^{\frac{1}{s}}$$
for $f \in L_{c}^{\infty}(\mu)$, $x \in \mathcal{X}$.
\end{definition}

\begin{remark}
If $(\mathcal{X},d,\mu)=(\mathbb{R}^n,|\cdot|,m_n)$, $l=0$, $\rho=1$, $K_{l,\theta}(x,y)=\displaystyle\frac{\Omega(x-y)}{|x-y|^{n-1}}$, then $\widetilde{\mathcal{M}}_{l, \rho, s}=\mathcal{M}_{\Omega}$ defined by Stein in\cite{Ste58}.
\end{remark}

The following two conditions will be used in some situations.

\begin{definition}\cite{Fu15,Fu142}
Let $\tau\in(1,\infty)$, $\mu$ satisfies the $\tau$-weak doubling condition, or $\mu\in \mathcal{D}_{\tau}$, if for all balls $B\subset\mathcal{X}$,
$$N^{(\tau)}_{B,\widetilde{B}^{\tau}}\leq C_{(\mu)}.$$
Let $\sigma \in(0, \infty)$, the function $\lambda$ defined in Definition 2.2 satisfies the $\sigma$-weak reverse doubling condition, or $\lambda\in\mathcal{R}_{\sigma}$, if for any $x \in \mathcal{X}$, $0<r<2 \operatorname{diam}(\mathcal{X})$ and $1<a<2 \operatorname{diam}(\mathcal{X}) / r$,
$$C_{(a)} \lambda(x, r)\leq \lambda(x, a r),$$
and
$$\sum_{j=1}^{\infty} \frac{1}{C_{(a^{j})}^{\sigma}}<\infty.$$
\end{definition}

To obtain the boundedness of $\widetilde{\mathcal{M}}_{l, \rho, s}$ and $\widetilde{\mathcal{M}}_{l, \rho, s,b}$, the following maximal operators are needed.

\begin{definition}\cite{Fu142}
The sharp maximal operator $\widetilde{M}^{\sharp}$ is defined as
$$\widetilde{M}^{\sharp} f(x)=\sup _{B \ni x} \frac{1}{\mu(6 B)} \int_{B}\left|f(y)-f_{B}\right| d \mu(y)+\sup _{(B, S) \in \Delta_{x}} \frac{\left|f_{B}-f_{S}\right|}{\widetilde{K}_{B, S}^{(6)}}$$
for any $f\in L^1_{loc}$ and $x \in \mathcal{X}$, where $\Delta_{x}$ is consisted of all pairs of doubling balls $(B,S)$ with $x\in B\subset S$.
\end{definition}

\begin{definition}\cite{Fu142}
Let $p \in(1, \infty)$ and $\tau \in[5, \infty)$, define
$$M_{p, \tau} f(x)=\sup _{B \ni x}\left(\frac{1}{\mu(\tau B)} \int_{B}|f(y)|^{p} d \mu(y)\right)^{\frac{1}{p}}$$
for any $f\in L^p_{loc}$, $x\in\mathcal{X}$, and
$$N f(x)=\sup _{\substack{doubling\ ball\\ B \ni x}} \fint_{B}|f(y)| d \mu(y)$$
for any $f\in L^1_{loc}$, $x \in \mathcal{X}$.
\end{definition}

\begin{definition}
Let $p \in(1, \infty)$, $\tau \in[5, \infty)$ and $\psi$ satisfy (5), define
$$M_{\psi,p, \tau} f(x)=\sup _{B \ni x}\psi(B)\left(\frac{1}{\mu(\tau B)} \int_{B}|f(y)|^{p} d \mu(y)\right)^{\frac{1}{p}}$$
for any $f\in L^p_{loc}$, $x\in\mathcal{X}$.
\end{definition}

In Section 4, we will use the following operator $T_{\lambda}$ to control $\widetilde{\mathcal{M}}_{l, \rho, s}$.

\begin{definition}
Let $\lambda$ be defined in Definition 2.2, define
$$T_{\lambda}(f)(x)=\int_{\mathcal{X}} \frac{f(y)}{\lambda(x, d(x, y))} d \mu(y)$$
for any $f \in L_{c}^{\infty}(\mu)$ and $x \notin \operatorname{supp}(f)$.
\end{definition}

Finally, we recall some lemmas about geometrically doubling metric space, discrete coefficients and several maximal operators, which will be used in Section 3 and Section 4.

\begin{lemma}\cite{Lin17}
The following propositions exist:

$\mathrm{(i)}$ There holds $\widetilde{K}_{B,R}^{(\tau)}\leq C_{(\tau)}\widetilde{K}_{B,S}^{(\tau)}$ for any $\tau\in(1,\infty)$ and balls $B\subset R\subset S$.

$\mathrm{(ii)}$ There holds $\widetilde{K}_{B,S}^{(\tau)}\leq C_{(\alpha,\tau)}$ for any $\alpha\in[1,\infty)$, $\tau\in(1,\infty)$ and balls $B\subset S$ with $r_s\leq\alpha r_B$.

$\mathrm{(iii)}$ There holds $\widetilde{K}^{(\tau)}_{B,S}\leq C_{(\tau,\beta,\nu)}$ for any $\tau,\eta,\beta\in(1,\infty)$ and concentric balls $B\subset S$, such that the $(\tau,\beta)$-doubling ball $\tau^kB$ with $k\in\mathbb{N}$ and satisfying $B\subset\tau^kB\subset S$ does not exist, where $\nu$ is defined in Remark 2.2.

$\mathrm{(iv)}$ For any $\tau\in(1,\infty)$ and balls $B\subset R\subset S$,
$$\widetilde{K}^{(\tau)}_{B,S}\leq\widetilde{K}^{(\tau)}_{B,R}+C_{(\tau,\nu)}\widetilde{K}^{(\tau)}_{R,S}.$$

$\mathrm{(v)}$ For any $\tau\in(1,\infty)$ and balls $B\subset R\subset S$, $\widetilde{K}^{(\tau)}_{R,S}\leq C_{(\tau,\nu)}\widetilde{K}^{(\tau)}_{B,S}$.

$\mathrm{(vi)}$ For any $\tau_1,\tau_2\in(1,\infty)$ and balls $B\subset S$,
$$c_{(\tau_1,\tau_2,\nu)}\widetilde{K}^{(\tau_1)}_{B,S}\leq\widetilde{K}^{(\tau_2)}_{B,S}\leq C_{(\tau_1,\tau_2,\nu)}\widetilde{K}^{(\tau_1)}_{B,S}.$$
\end{lemma}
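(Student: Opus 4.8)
The plan is to prove each of (i)--(vi) directly from the definition of $\widetilde{K}_{B,S}^{(\tau)}$ in Definition 2.3, writing $a_k^{B}:=\mu(\tau^{k}B)/\lambda(c_B,\tau^{k}r_B)$ for the generic summand so that $\widetilde{K}_{B,S}^{(\tau)}=1+\sum_{k=-\lfloor\log_\tau 2\rfloor}^{N_{B,S}^{(\tau)}}a_k^{B}$. Two facts do all the work. First, the upper doubling condition (Definition 2.2) gives $\mu(\tau^k B)\le\lambda(c_B,\tau^k r_B)$, so every summand obeys $0<a_k^B\le 1$, and it also forces the denominators to grow at a controlled geometric rate, namely $\lambda(c_B,\tau^k r_B)\le\tau^{\nu}\lambda(c_B,\tau^{k-1}r_B)$ with $\nu=\log_2 C_{(\lambda)}$. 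Second, the center regularity (3) of Remark 2.1 lets me replace the center $c_B$ by a nearby center (such as $c_R$) at the cost of a factor $C_{(\lambda)}$, which is what makes the comparisons across different balls possible.

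With this setup, (i) and (ii) are immediate. Both $\widetilde{K}_{B,R}^{(\tau)}$ and $\widetilde{K}_{B,S}^{(\tau)}$ use the \emph{same} summand $a_k^B$ anchored at $c_B,r_B$, so only the upper index differs; for (i) the inclusion $B\subset R\subset S$ gives $N_{B,R}^{(\tau)}\le N_{B,S}^{(\tau)}$ up to an additive constant controlled by the radii, whence the sum for $\widetilde{K}_{B,R}^{(\tau)}$ is a sub-sum of a fixed dilate of that for $\widetilde{K}_{B,S}^{(\tau)}$, yielding the factor $C_{(\tau)}$. For (ii), if $r_S\le\alpha r_B$ then $\tau^{N_{B,S}^{(\tau)}-1}r_B<r_S\le\alpha r_B$ bounds the number of summands by $\log_\tau\alpha+\lfloor\log_\tau 2\rfloor+2$, and since each $a_k^B\le 1$ the whole sum is at most a constant depending only on $\alpha$ and $\tau$.

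The heart of the argument is (iii)--(vi). For (iii), the hypothesis that no $(\tau,\beta)$-doubling ball $\tau^k B$ lies between $B$ and $S$ means $\mu(\tau^{k+1}B)>\beta\mu(\tau^k B)$ for every admissible $k$; iterating downward from the top index $N:=N_{B,S}^{(\tau)}$ gives $\mu(\tau^k B)<\beta^{-(N-k)}\mu(\tau^N B)\le\beta^{-(N-k)}\lambda(c_B,\tau^N r_B)$, while the controlled growth of $\lambda$ gives $\lambda(c_B,\tau^k r_B)\ge\tau^{-\nu(N-k)}\lambda(c_B,\tau^N r_B)$. Dividing, $a_k^B\le(\tau^{\nu}/\beta)^{N-k}$, so once $\beta>\tau^{\nu}$ the summands decay geometrically and $\sum_k a_k^B$ is bounded by a constant depending only on $\tau,\beta,\nu$. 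Inequalities (iv) and (v) are proved by splitting the sum for $\widetilde{K}_{B,S}^{(\tau)}$ at the index $k_0$ where $\tau^{k_0}r_B$ first exceeds $r_R$: the low part is (up to a bounded overlap) $\widetilde{K}_{B,R}^{(\tau)}$, and for the high part each term $a_k^B$ is compared to the corresponding term of $\widetilde{K}_{R,S}^{(\tau)}$ by noting that $\tau^k B$ and the ball concentric at $c_R$ of the same radius are mutually contained in bounded dilates of one another (so their measures are comparable via Remark 2.2) while their $\lambda$-values are comparable via (3); this produces the constant $C_{(\tau,\nu)}$. Finally (vi) is a change-of-base estimate: since $\tau_1^k r_B$ and $\tau_2^j r_B$ sweep the same range $[r_B,r_S]$ of radii, each summand of the $\tau_1$-sum is dominated by a bounded number of summands of the $\tau_2$-sum, again because $\lambda$ and $\mu$ change by at most a fixed factor when the radius is multiplied by $\max\{\tau_1,\tau_2\}$.

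I expect the main obstacle to be the book-keeping in (iv) and (v), where the summands are attached to the fixed center $c_B$ while $\widetilde{K}_{R,S}^{(\tau)}$ uses the center $c_R$: one must verify that over the relevant range of radii the two families of concentric balls are comparable in measure and that (3) genuinely applies, which needs $d(c_B,c_R)$ to be dominated by the radii in play, true once the radius reaches $r_R$. A secondary subtlety is (iii), where the geometric series converges only when $\beta$ exceeds the $\lambda$-growth factor $\tau^{\nu}$; for the conclusion to be meaningful the doubling threshold must be read together with the definition of $\beta_\alpha$ in Remark 2.2, ensuring it is taken large enough.
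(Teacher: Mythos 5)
Note first that the paper does not prove this lemma at all: it is imported verbatim from \cite{Lin17}, so your attempt can only be measured against the standard arguments in that literature. Your treatment of (i)--(iii) is essentially that standard argument: each summand $a_k^B:=\mu(\tau^k B)/\lambda(c_B,\tau^k r_B)$ is at most $1$ by upper doubling, (ii) follows by counting at most $\log_\tau\alpha+\log_\tau 2+2$ summands, and (iii) follows from the geometric decay $a_k^B\le(C\tau^{\nu}/\beta)^{N-k}$. You are also right to flag that (iii) as transcribed (``any $\beta\in(1,\infty)$'', with a stray $\eta$) is too strong: for Lebesgue measure with $\lambda(x,r)=Cr^n$ and $\beta<\tau^n$, no dilate $\tau^kB$ is $(\tau,\beta)$-doubling, yet $\widetilde{K}^{(\tau)}_{B,S}\approx N^{(\tau)}_{B,S}$ is unbounded; the hypothesis must be read with $\beta$ above the per-step growth rate $C_{(\lambda)}^{\lceil\log_2\tau\rceil}$ of $\lambda$, as in the choice of $\beta_\alpha$ in Remark 2.2. (In (i) your ``additive constant controlled by the radii'' silently assumes $r_R\lesssim r_S$, which bare set inclusion does not give in a general metric space, but this is the same convention the cited literature adopts.)

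The genuine gap is in your justification of the cross-center and change-of-base comparisons. For (iv)--(v) you assert that $\tau^kB$ and the concentric ball at $c_R$ of the same radius have ``measures\ldots comparable via Remark 2.2,'' and for (vi) that ``$\mu$ change[s] by at most a fixed factor when the radius is multiplied'': both statements are precisely the doubling property of $\mu$, which is what fails on a non-homogeneous space. Remark 2.2 only guarantees that \emph{some} dilate $\alpha^iB$ is $(\alpha,\beta_\alpha)$-doubling; it gives no comparability of $\mu(\tau^kB)$ with $\mu(B(c_R,\tau^kr_B))$, and for non-doubling $\mu$ these can differ arbitrarily. The step is repairable, and one-sidedly: on the relevant range $d(c_B,c_R)<r_R\le\tau^kr_B$, take the least $j$ with $\tau^jr_R\ge 2\tau^kr_B$; then $\tau^kB\subset\tau^jR$ gives $\mu(\tau^kB)\le\mu(\tau^jR)$ by monotonicity alone, while $\tau^jr_R\le 2\tau^{k+1}r_B$ together with (3) and upper doubling of $\lambda$ gives $\lambda(c_R,\tau^jr_R)\le C_{(\tau,\nu)}\lambda(c_B,\tau^kr_B)$, whence $a_k^B\le C_{(\tau,\nu)}a_j^R$ with $k\mapsto j$ boundedly many-to-one; the $O_\tau(1)$ indices $j$ exceeding $N^{(\tau)}_{R,S}$ contribute at most a constant, absorbed because every summand is $\le 1$ and $\widetilde{K}^{(\tau)}_{R,S}\ge 1$. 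Running the same one-sided scheme with the roles of the two balls (or of $\tau_1,\tau_2$) exchanged proves (v) and (vi), and with this substitution your split at $k_0$ and the rest of the bookkeeping go through; as written, however, the pivotal step rests on a measure comparability that is false in exactly the setting this paper works in.
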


\begin{lemma}\cite{Fu15}
Let $\tau>1$, $m>1$ be an integer, and $B_1\subset B_2\subset\cdots\subset B_m$ be concentric balls with radii $\tau^Nr_{B_1}$, where $N\in\mathbb{N}$. If $\widetilde{K}_{B_i,B_{i+1}}^{(\tau)}>3+\lfloor\log_{\tau}2\rfloor$ for any $i\in\{1,2,\cdots,m-1\}$, then,
$$\sum_{i=1}^{m-1}\widetilde{K}_{B_i,B_{i+1}}^{(\tau)}<(3+\lfloor\log_{\tau}2\rfloor)\widetilde{K}_{B_1,B_m}^{(\tau)}.$$
\end{lemma}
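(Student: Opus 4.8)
Write $L:=\lfloor\log_\tau 2\rfloor$ and let $c$ be the common centre of the chain. Since the balls are concentric with radii integer powers of $\tau$ times $r_{B_1}$ (the outermost being $\tau^N r_{B_1}$), I would put $B_i=B(c,\tau^{a_i}r_{B_1})$ with integers $0=a_1<a_2<\cdots<a_m=N$. My first step is to pass to a single scale profile: setting $S_j:=\mu(B(c,\tau^j r_{B_1}))/\lambda(c,\tau^j r_{B_1})$ for $j\in\mathbb{Z}$, the substitution $k\mapsto j=k+a_i$ in Definition 2.4 gives $N^{(\tau)}_{B_i,B_{i+1}}=a_{i+1}-a_i$ and turns each coefficient into a block sum,
$$\widetilde{K}^{(\tau)}_{B_i,B_{i+1}}=1+\sum_{j=a_i-L}^{a_{i+1}}S_j,\qquad \widetilde{K}^{(\tau)}_{B_1,B_m}=1+\sum_{j=-L}^{N}S_j.$$
The key structural observation is that consecutive ranges $[a_i-L,a_{i+1}]$ and $[a_{i+1}-L,a_{i+2}]$ overlap only on the block $[a_{i+1}-L,a_{i+1}]$ of length $L+1$, so the entire difficulty reduces to controlling this bounded overcounting.

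Next I would split $\widetilde{K}^{(\tau)}_{B_i,B_{i+1}}=1+P_i+Q_i$, where $P_i:=\sum_{j=a_i-L}^{a_i}S_j$ is the overlap block ($L+1$ terms) and $Q_i:=\sum_{j=a_i+1}^{a_{i+1}}S_j$ is the genuinely new part. The half-open blocks $(a_i,a_{i+1}]$ tile $(0,N]$, so the $Q_i$ telescope: $\sum_{i=1}^{m-1}Q_i=\sum_{j=1}^{N}S_j\le \widetilde{K}^{(\tau)}_{B_1,B_m}-1$. The crucial input is then the upper doubling inequality $\mu(B(x,r))\le\lambda(x,r)$ of Definition 2.2, which forces $S_j\le 1$ for every $j$ and hence $P_i\le L+1$. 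Feeding this into the hypothesis $\widetilde{K}^{(\tau)}_{B_i,B_{i+1}}>3+L$, i.e. $P_i+Q_i>2+L$, yields $Q_i>(2+L)-(L+1)=1$ for each $i$, so that $m-1<\sum_{i=1}^{m-1}Q_i\le \widetilde{K}^{(\tau)}_{B_1,B_m}-1$. This is precisely where the assumption that each coefficient is large is used: it bounds the number of balls by the total coefficient.

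For the remaining overlap blocks I would count multiplicities. A fixed index $j$ lies in $[a_i-L,a_i]$ exactly when $a_i\in[j,j+L]$, and since the $a_i$ are strictly increasing integers at most $L+1$ of them fall in an interval containing $L+1$ integers; thus each $S_j$ is used at most $L+1$ times across the $P_i$. As the indices occurring range within $[-L,N]$, this gives $\sum_{i=1}^{m-1}P_i\le (L+1)\sum_{j=-L}^{N}S_j=(L+1)\big(\widetilde{K}^{(\tau)}_{B_1,B_m}-1\big)$. Combining the three estimates,
$$\sum_{i=1}^{m-1}\widetilde{K}^{(\tau)}_{B_i,B_{i+1}}=(m-1)+\sum_{i=1}^{m-1}P_i+\sum_{i=1}^{m-1}Q_i<(L+3)\big(\widetilde{K}^{(\tau)}_{B_1,B_m}-1\big)<(3+L)\,\widetilde{K}^{(\tau)}_{B_1,B_m},$$
which is exactly the claim with constant $3+\lfloor\log_\tau 2\rfloor$.

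The main obstacle is conceptual rather than computational: one must recognise that the overlap length $L+1$ is exploited twice in different ways—once to bound $P_i$ pointwise via upper doubling, and once to bound the total multiplicity of the $P_i$—and that these, together with the telescoping of the $Q_i$, conspire to reproduce the sharp constant $3+L$. The hypothesis $\widetilde{K}^{(\tau)}_{B_i,B_{i+1}}>3+L$ enters only through $Q_i>1$, without which the count $(m-1)$ of unit contributions could not be absorbed into $\widetilde{K}^{(\tau)}_{B_1,B_m}$.
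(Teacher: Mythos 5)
Your argument is correct. A caveat on the comparison: this paper does not prove the lemma at all --- it is imported verbatim from \cite{Fu15} --- so there is no in-paper proof to measure against; what you wrote is, in substance, the standard argument for this discrete-coefficient chain estimate (pass to the single scale profile $S_j$, telescope the genuinely new blocks, and absorb the overlap of length $L+1$ using the upper doubling bound $\mu(B(x,r))\le\lambda(x,r)$, which gives $S_j\le 1$). Two small remarks. First, your multiplicity count for $\sum_i P_i$ is dispensable and slightly obscures the mechanism: since $P_i\le L+1$ pointwise and the hypothesis forces $Q_i>1$, each coefficient already satisfies $\widetilde{K}^{(\tau)}_{B_i,B_{i+1}}=1+P_i+Q_i\le (L+2)+Q_i<(L+3)Q_i$, and summing over $i$ together with the telescoping bound $\sum_i Q_i\le \widetilde{K}^{(\tau)}_{B_1,B_m}-1$ yields the claim in one line with the same constant $3+L$. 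Second, you posit the strict monotonicity $a_1<a_2<\cdots<a_m$ as part of the setup; in fact strictness is forced by the hypothesis (if $a_{i+1}=a_i$ then $Q_i$ is an empty sum and $\widetilde{K}^{(\tau)}_{B_i,B_{i+1}}\le 1+(L+1)=L+2<3+L$, a contradiction), so this deserves a sentence of justification rather than an assumption --- but that is cosmetic, and your interpretation of the (awkwardly phrased) hypothesis that the radii are integer powers $\tau^{a_i}r_{B_1}$ is the intended one.
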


\begin{lemma}\cite{Fu142}
Let $p \in(1, \infty)$ and $\tau \in[5, \infty)$, then $M_{p, \tau}$ and $N$ are bounded on $L^{p}$.
\end{lemma}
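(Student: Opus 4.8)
The plan is to handle both operators by the classical three-step scheme for maximal functions: establish a weak-type $(1,1)$ estimate, note the trivial $L^{\infty}$ bound, and interpolate by Marcinkiewicz. The only genuinely metric-geometric input is a covering lemma, and the hypotheses $\tau\in[5,\infty)$ (for $M_{p,\tau}$) and the restriction to doubling balls (for $N$) are precisely what make that covering step succeed. I would first dispose of the inner power by writing $M_{p,\tau}f=(M_{1,\tau}(|f|^{p}))^{1/p}$, where $M_{1,\tau}g(x)=\sup_{B\ni x}\frac{1}{\mu(\tau B)}\int_{B}|g|\,d\mu$, so that it suffices to analyze the two base operators $M_{1,\tau}$ and $N$, each of which averages a single power of its argument.

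For $M_{1,\tau}$ I would prove the weak-type $(1,1)$ bound as follows. Fix $\alpha>0$ and, for each $x$ in the level set $\{M_{1,\tau}g>\alpha\}$, choose a ball $B_{x}\ni x$ with $\mu(\tau B_{x})<\frac{1}{\alpha}\int_{B_{x}}|g|\,d\mu$. After truncating to radii $\leq R$ (and letting $R\to\infty$ at the end by monotone convergence), the basic $5r$-covering lemma, valid in any metric space for a family of balls of uniformly bounded radius, furnishes a disjoint subfamily $\{B_{i}\}$ with $\bigcup_{x}B_{x}\subset\bigcup_{i}5B_{i}$. Here the hypothesis $\tau\geq 5$ is decisive: since $5B_{i}\subset\tau B_{i}$, disjointness of the $B_{i}$ gives $\mu(\{M_{1,\tau}g>\alpha\})\leq\sum_{i}\mu(\tau B_{i})<\frac{1}{\alpha}\sum_{i}\int_{B_{i}}|g|\,d\mu\leq\frac{1}{\alpha}\|g\|_{L^{1}}$. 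Combined with the obvious bound $\|M_{1,\tau}g\|_{L^{\infty}}\leq\|g\|_{L^{\infty}}$ (because $\mu(B)\leq\mu(\tau B)$), Marcinkiewicz interpolation yields boundedness of $M_{1,\tau}$ on $L^{r}$ for every $r\in(1,\infty)$; the stated $L^{p}$ bound for $M_{p,\tau}$ then follows by the power-function substitution $g=|f|^{p}$ together with interpolation against the weak-type estimate.

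The operator $N$ is handled by the identical argument, with one change: its supremum runs only over doubling balls, so the selected balls $B_{i}$ are $(6,\beta_{6})$-doubling and carry genuine averages $\fint_{B_{i}}|f|\,d\mu>\alpha$. In place of the $\tau$-dilation I would now invoke the doubling property, $\mu(5B_{i})\leq\mu(6B_{i})\leq\beta_{6}\,\mu(B_{i})<\frac{\beta_{6}}{\alpha}\int_{B_{i}}|f|\,d\mu$; summing over the disjoint family again produces the weak-type $(1,1)$ estimate, and the $L^{\infty}$ bound plus Marcinkiewicz interpolation finish the proof for $N$ on $L^{p}$, $p\in(1,\infty)$.

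The step I expect to be the main obstacle is the weak-type $(1,1)$ estimate, and within it the covering argument: one must extract a disjoint subfamily and then pass from each selected ball to its dilate \emph{without losing control of the measure}. For $M_{1,\tau}$ this is exactly where $\tau\geq5$ enters, since the dilation forced by the covering lemma must be absorbed by the dilation already present in the denominator $\mu(\tau B)$; for $N$ it is the $(6,\beta_{6})$-doubling of the selected balls, available by Remark 2.2, that plays the same role. The truncation to bounded radii and the subsequent limit, needed to legitimize the covering lemma, are routine and I would relegate them to a remark.
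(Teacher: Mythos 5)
You are working against a statement the paper does not prove at all: Lemma 2.3 is quoted from \cite{Fu142}, so the only fair comparison is with the standard argument in that literature, and your scheme (weak $(1,1)$ via the $5r$-covering lemma, the trivial $L^{\infty}$ bound, Marcinkiewicz interpolation) is indeed that standard argument. Your treatment of $N$ is complete and correct: the selected balls are $(6,\beta_{6})$-doubling, so $\mu(5B_{i})\leq\mu(6B_{i})\leq\beta_{6}\mu(B_{i})<\frac{\beta_{6}}{\alpha}\int_{B_{i}}|f|\,d\mu$, and summing over the disjoint family gives the weak-type bound; likewise your identification of where $\tau\geq 5$ enters for $M_{1,\tau}$ (absorbing the $5$-dilation of the covering lemma into the dilation already present in the denominator $\mu(\tau B)$) is exactly right, and the truncation to bounded radii is the correct way to legitimize the covering step.

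There is, however, a genuine gap in the last step for $M_{p,\tau}$, and it is not repairable. The substitution $g=|f|^{p}$ converts the weak $(1,1)$ estimate for $M_{1,\tau}$ into a weak $(p,p)$ estimate for $M_{p,\tau}$, and interpolating this against the $L^{\infty}$ bound yields boundedness of $M_{p,\tau}$ on $L^{q}$ only for $q>p$; Marcinkiewicz never reaches the endpoint $q=p$. Strong $L^{p}$ boundedness of $M_{p,\tau}$ is, via $\|M_{p,\tau}f\|_{L^{p}}^{p}=\|M_{1,\tau}(|f|^{p})\|_{L^{1}}$, \emph{equivalent} to $L^{1}$-boundedness of $M_{1,\tau}$, which no such maximal operator enjoys. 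Indeed the endpoint claim is false already in the model case $(\mathbb{R}^{n},|\cdot|,m_{n})$: there $M_{p,\tau}f=\tau^{-n/p}\bigl(M(|f|^{p})\bigr)^{1/p}$ with $M$ the non-centred Hardy--Littlewood operator, and $Mg\notin L^{1}$ for any nonzero $g\in L^{1}$, so $M_{p,\tau}f\notin L^{p}$ for any nonzero $f\in L^{p}$. What your argument actually proves is the statement as it appears in the source, namely that $M_{r,\tau}$ is bounded on $L^{p}$ for $1\leq r<p<\infty$ (equivalently, $M_{p,\tau}$ is bounded on $L^{q}$ for $q\in(p,\infty)$), together with the weak-type endpoint, plus the full claim for $N$. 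You should have flagged that your own machinery stops at $q>p$ and that the lemma as transcribed here (with $r=p$) therefore cannot be correct as stated; note that the paper later uses precisely this endpoint form in the proof of Theorem 4.3 (the step $\|M_{p,\tau}(f)\|_{L^{p}(B)}\lesssim\|f\|_{L^{p}(B)}$), so the discrepancy is not merely cosmetic.
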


\begin{lemma}\cite{Fu142}
Let $f\in L_{loc}^1$ (and satisfy $\int_{\mathcal{X}} f(x) d \mu(x)=0$ if $\mu(\mathcal{X})<\infty$), and $\inf \{1, N f\} \in L^{p}$ for some $1<p<\infty$. Then,
$$\|N f\|_{L^{p}} \lesssim\left\|\widetilde{M}^{\sharp} f\right\|_{L^{p}}.$$
\end{lemma}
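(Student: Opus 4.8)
The plan is to establish the estimate through a good-$\lambda$ inequality comparing the level sets of $Nf$ and $\widetilde{M}^{\sharp}f$, and then integrate in $\lambda$. Concretely, I would aim to prove that there exist a fixed dilation constant $a>1$ and a function $\delta(\gamma)$ with $\delta(\gamma)\to0$ as $\gamma\to0^{+}$, such that for every $\lambda>0$ and every small $\gamma>0$,
$$\mu\left(\{x:Nf(x)>a\lambda,\ \widetilde{M}^{\sharp}f(x)\leq\gamma\lambda\}\right)\leq\delta(\gamma)\,\mu\left(\{x:Nf(x)>\lambda\}\right).$$
Granting this, the Cavalieri representation $\|Nf\|_{L^p}^p=\int_0^\infty p\lambda^{p-1}\mu(\{Nf>\lambda\})\,d\lambda$ together with the splitting $\{Nf>a\lambda\}\subset\{Nf>a\lambda,\ \widetilde{M}^{\sharp}f\leq\gamma\lambda\}\cup\{\widetilde{M}^{\sharp}f>\gamma\lambda\}$ yields, after a change of variables, $\|Nf\|_{L^p}^p\leq a^p\delta(\gamma)\|Nf\|_{L^p}^p+C_\gamma\|\widetilde{M}^{\sharp}f\|_{L^p}^p$; choosing $\gamma$ small enough that $a^p\delta(\gamma)<1$ lets me absorb the first term. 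The hypothesis $\inf\{1,Nf\}\in L^p$ (and the vanishing-integral condition when $\mu(\mathcal{X})<\infty$) is exactly what guarantees the absorbed quantity is finite, so I would first run the argument with $Nf$ replaced by $\min\{Nf,M\}$ to keep everything finite and then let $M\to\infty$.

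The core is the good-$\lambda$ inequality, which I would prove by localizing over a covering of $\Omega_\lambda:=\{Nf>\lambda\}$. The geometric doubling condition (Definition 2.1) supplies a Besicovitch-type covering of $\Omega_\lambda$ by a family of balls $\{B_j\}$ of bounded overlap, each chosen so that a fixed dilate of $B_j$ meets $\Omega_\lambda^{c}$. I would then fix one such ball $B_j$ and estimate $\mu(\{x\in B_j:Nf(x)>a\lambda,\ \widetilde{M}^{\sharp}f(x)\leq\gamma\lambda\})$. For such an $x$ there is a doubling ball $Q\ni x$ realizing $m_Q(|f|)>a\lambda$; the key geometric point is that, because a dilate of $B_j$ reaches a point where $Nf\leq\lambda$, the ball $Q$ must have radius comparable to $r_{B_j}$, since larger doubling balls would have average close to $\lambda<a\lambda$ once $a$ is taken large relative to the transition constants. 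Thus only doubling balls of controlled size, contained in a fixed dilate $B_j^{*}$ of $B_j$, contribute.

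On $B_j^{*}$ I would pass from the averages $m_Q(|f|)$ to oscillations $m_Q(|f-f_{B_j^{*}}|)$: the size term $m_{B_j^{*}}(|f|)\lesssim\lambda$ is controlled by the covering property, while the two ingredients of $\widetilde{M}^{\sharp}f$ govern the remainder — the first supremum controls the local mean oscillation $\frac{1}{\mu(6Q)}\int_Q|f-f_Q|$, and the second supremum, weighted by the coefficients $\widetilde{K}_{Q,S}^{(6)}$, controls the jumps $|f_Q-f_{B_j^{*}}|$ between nested doubling balls (here Lemma 2.1(i),(ii) bound these coefficients because the radii in play are comparable). Consequently $\{x\in B_j:Nf(x)>a\lambda,\ \widetilde{M}^{\sharp}f\leq\gamma\lambda\}$ is contained in a level set of a local Hardy--Littlewood-type maximal operator applied to $(f-f_{B_j^{*}})\chi_{B_j^{*}}$ at height $\approx(a-C)\lambda$, whose weak-$(1,1)$ bound (a consequence of the geometric doubling condition) gives a contribution $\lesssim\frac{\gamma\lambda\,\mu(B_j^{*})}{(a-C)\lambda}\lesssim\gamma\,\mu(B_j)$. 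Summing over $j$ using bounded overlap and $\sum_j\mu(B_j)\lesssim\mu(\Omega_\lambda)$ produces the good-$\lambda$ inequality with $\delta(\gamma)\approx\gamma$.

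I expect the main obstacle to be the non-doubling geometry: unlike the classical Fefferman--Stein argument, the mean values $f_Q$ over different doubling balls are not automatically close, so the jump terms $|f_Q-f_S|$ must be absorbed using the discrete coefficients $\widetilde{K}_{B,S}^{(\tau)}$, and one must verify these stay bounded when the balls involved have comparable radii. A second delicate point is quantifying that the doubling ball $Q$ realizing a large value of $Nf$ near the boundary of $\Omega_\lambda$ has controlled size, which again passes through the coefficient estimates of Lemma 2.1 and the choice of $a$. The finiteness issue flagged by the hypothesis $\inf\{1,Nf\}\in L^p$ is handled cleanly by the truncation described above, so the genuine difficulty lies in the interplay between doubling balls and the coefficient-weighted jumps.
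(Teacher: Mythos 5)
You should first note that the paper contains no proof of this lemma: it is imported verbatim from \cite{Fu142}, and within that line of literature the proof is exactly the good-$\lambda$ argument you outline (Tolsa's argument for non-doubling measures on $\mathbb{R}^n$, adapted to geometrically doubling metric spaces). So at the level of architecture your proposal is the standard and correct route: the good-$\lambda$ inequality, the Cavalieri integration with absorption, and the truncation $\min\{Nf,M\}$ to exploit the hypothesis $\inf\{1,Nf\}\in L^p$ (indeed $\min\{Nf,M\}\leq M\inf\{1,Nf\}\in L^p$, so the absorbed term is finite, and one lets $M\to\infty$) all match the cited proof.

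Two steps, however, would fail as literally written. First, you invoke a ``Besicovitch-type covering'' with bounded overlap as a consequence of geometric doubling; the Besicovitch covering theorem is false in general geometrically doubling metric spaces. What saves the argument is precisely that $N$ is defined over $(6,\beta_6)$-doubling balls: if $Q$ is doubling and $m_Q(|f|)>a\lambda$ with $a>1$, then every $y\in Q$ satisfies $Nf(y)\geq m_Q(|f|)>\lambda$, hence $Q\subset\Omega_\lambda$; one then selects a Vitali-disjoint subfamily $\{Q_j\}$ (the $5r$-covering lemma, which \emph{does} hold) and uses $\sum_j\mu(5Q_j)\leq\sum_j\mu(6Q_j)\leq\beta_6\sum_j\mu(Q_j)\leq\beta_6\,\mu(\Omega_\lambda)$ as the substitute for bounded overlap. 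This containment $Q\subset\Omega_\lambda$ also replaces your ``key geometric point'' that $Q$ must have radius comparable to $r_{B_j}$ --- a claim you assert but do not prove, and which is not needed: non-doubling measures give you no lower bound on how large a doubling ball with big average can be, so an argument resting on radius comparability is genuinely at risk. Second, the jump component of $\widetilde{M}^{\sharp}$ controls $|f_B-f_S|/\widetilde{K}_{B,S}^{(6)}$ only for pairs $(B,S)\in\Delta_x$ of \emph{doubling} balls, whereas your reference ball $B_j^{*}$ (a fixed dilate) need not be doubling; you must pass to the smallest doubling dilate $\widetilde{(B_j^{*})}^{6}$ and telescope via Lemma 2.1(i),(ii) and a Lemma 2.5-type estimate, which your sketch elides. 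Finally, you never locate where the normalization $\int_{\mathcal{X}}f\,d\mu=0$ for $\mu(\mathcal{X})<\infty$ enters: it is not merely a finiteness device --- without it the lemma is false (for $f\equiv c$ one has $\widetilde{M}^{\sharp}f=0$ but $Nf=|c|$), and in the proof it is what forces the averages over doubling balls comparable to all of $\mathcal{X}$ to be small, closing the case of large stopping balls that your size-control step cannot handle.
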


The last two lemmas can be directly obtained by Lemma 2.1, and Lemma 2.2 with the similar method used in\cite[Lemma 2.7]{Hyt12}, respectively. We omit the proofs here.

\begin{lemma}
Let $k>1$ and $j \in \mathbb{N}$, for all balls $B\subset\mathcal{X}$,
$$\frac{1}{\psi(B)}\left|f_{k B}-f_{B}\right| \lesssim\|f\|_{\widetilde{\mathcal{L}}^{\psi,\tau,\gamma}},$$
and
$$\frac{1}{\psi(B)}\left|f_{k^{j} B}-f_{B}\right| \lesssim j\|f\|_{\widetilde{\mathcal{L}}^{\psi,\tau,\gamma}}.$$
\end{lemma}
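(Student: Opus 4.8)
The plan is to derive both estimates directly from the two defining conditions (6) and (7) of $\widetilde{\mathcal{L}}^{\psi,\tau,\gamma}$, together with the fact (Lemma 2.1(ii)) that the discrete coefficient stays bounded on comparable balls. The first estimate is essentially immediate, and the second is obtained from the first by a telescoping argument along a geometric chain of concentric dilates; the factor $j$ will come out as the number of links in the chain.

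For the first estimate I would start from the observation that $B\subset kB$ with $r_{kB}=kr_B$, so the two concentric balls are comparable. Applying Lemma 2.1(ii) with $\alpha=k$ gives $\widetilde{K}_{B,kB}^{(\tau)}\leq C_{(k,\tau)}$, and feeding this into (7) for the pair $B\subset kB$ yields
$$\frac{1}{\psi(B)}\left|f_{kB}-f_B\right|\leq\|f\|_{\widetilde{\mathcal{L}}^{\psi,\tau,\gamma}}\left(\widetilde{K}_{B,kB}^{(\tau)}\right)^{\gamma}\leq C_{(k,\tau)}^{\gamma}\,\|f\|_{\widetilde{\mathcal{L}}^{\psi,\tau,\gamma}},$$
which is the desired bound. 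This step is routine bookkeeping with the constants from Lemma 2.1.

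For the second estimate I would insert the concentric chain $B=k^0B\subset k^1B\subset\cdots\subset k^jB$ and telescope,
$$\left|f_{k^jB}-f_B\right|\leq\sum_{i=0}^{j-1}\left|f_{k^{i+1}B}-f_{k^iB}\right|.$$
Since $k^{i+1}B$ is precisely the $k$-dilate of $k^iB$, the first estimate applied to the ball $k^iB$ controls each summand by $C_{(k,\tau)}^{\gamma}\,\psi(k^iB)\,\|f\|_{\widetilde{\mathcal{L}}^{\psi,\tau,\gamma}}$. Dividing by $\psi(B)$ and summing over the $j$ links then produces a bound of the form $C_{(k,\tau)}^{\gamma}\,\|f\|_{\widetilde{\mathcal{L}}^{\psi,\tau,\gamma}}\sum_{i=0}^{j-1}\psi(k^iB)/\psi(B)$, in which the number of terms supplies the claimed linear factor $j$, exactly as in the classical $\widetilde{\mathrm{RBMO}}(\mu)$ case $\psi\equiv1$, $\gamma=1$.

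I expect the only genuinely delicate point to be ensuring that this telescoping sum is truly linear in $j$, i.e. dominating $\sum_{i=0}^{j-1}\psi(k^iB)/\psi(B)$ by a constant multiple of $j$; this is where the regularity hypothesis (5) on $\psi$ and the uniform-in-$i$ bound $\widetilde{K}_{k^iB,k^{i+1}B}^{(\tau)}\leq C_{(k,\tau)}$ from Lemma 2.1(ii) must be used to keep the per-step constants from accumulating. It is worth contrasting the telescoping route with the more direct one of applying (7) once to the pair $B\subset k^jB$: there each of the $N_{B,k^jB}^{(\tau)}\approx j\log_\tau k$ summands in the definition of $\widetilde{K}_{B,k^jB}^{(\tau)}$ is at most $1$ by the upper doubling condition, so $\widetilde{K}_{B,k^jB}^{(\tau)}\lesssim_{(k,\tau)}j$, but after raising to the $\gamma$-th power this yields only $j^{\gamma}$. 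It is therefore the telescoping argument, where linearity in $j$ is manifest, that matches the stated bound, and the heart of the proof lies in controlling the $\psi$-quotients so that linearity is preserved.
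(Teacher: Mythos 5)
Your proof of the first estimate coincides with the paper's intended argument (the paper omits the proof, noting the lemma is ``directly obtained by Lemma 2.1''): apply (7) to $B\subset kB$ and bound $\widetilde{K}_{B,kB}^{(\tau)}$ by Lemma 2.1(ii). For the second estimate, however, your telescoping argument has a genuine gap, and it is exactly at the point you flagged. Applying the first estimate to $k^iB$ gives $|f_{k^{i+1}B}-f_{k^iB}|\lesssim\psi(k^iB)\,\|f\|_{\widetilde{\mathcal{L}}^{\psi,\tau,\gamma}}$, so after dividing by $\psi(B)$ you must show $\sum_{i=0}^{j-1}\psi(k^iB)/\psi(B)\lesssim j$. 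Condition (5) does not give this: it only provides $\psi(2B)\leq C\psi(B)$ with a constant that may exceed $1$, so the quotients $\psi(k^iB)/\psi(B)$ can grow geometrically like $C^{i\log_2 k}$, and the telescoped sum can be exponential in $j$, not linear. This is not a hypothetical pathology: the paper explicitly includes $\psi(x,r)=\lambda(x,r)^{\alpha}$, $\alpha>0$ (Remark 2.5(ii)), for which $\psi(k^iB)/\psi(B)$ is typically unbounded in $i$. The per-step constants from Lemma 2.1(ii) are under control, as you say; it is the $\psi$-quotients that accumulate, and no mechanism in (5) prevents this.

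The route the paper intends is precisely the direct one you computed and then discarded: apply (7) once to the pair $B\subset k^jB$ and use that $\widetilde{K}_{B,k^jB}^{(\tau)}$ has at most $1+\lfloor\log_{\tau}2\rfloor+N_{B,k^jB}^{(\tau)}\lesssim_{(k,\tau)} j$ summands, each at most $1$ by the upper doubling condition $\mu(\tau^iB)\leq\lambda(c_B,\tau^ir_B)$, so $\widetilde{K}_{B,k^jB}^{(\tau)}\lesssim_{(k,\tau)} j$. The crucial structural point is that (7) already normalizes by $\psi$ of the \emph{smallest} ball $B$, which matches the normalization in the lemma, so no intermediate $\psi$-quotients ever appear. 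Your objection that this yields $j^{\gamma}$ rather than $j$ is a fair criticism of the lemma's statement for $\gamma>1$, but it is a defect of the statement, not a reason to switch methods: the paper adopts $\gamma=1$ as the standing convention after Theorem 3.2 (``we still consider $\gamma=1$ in general''), every subsequent use of the second estimate is in $\widetilde{\mathcal{L}}^{\psi}=\widetilde{\mathcal{L}}^{\psi,\tau,1}$, and under $\mu\in\mathcal{D}_{\tau}$ Theorem 3.2 reduces general $\gamma$ to $\gamma=1$ anyway. So the direct argument proves the lemma in the form actually needed, while the telescoping replacement you propose does not close.
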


\begin{lemma}
Let $\tau>1$, $\mu\in\mathcal{D}_{\tau}$, then there exists $C>0$ satisfying that: For any $x\in\mathcal{X}$ and balls $B,S$ with $x\in B\subset S$ and $\widetilde{K}_{B,S}^{(\tau)}\leq C$,
$$\frac{1}{\psi(B)}|f_B-f_S|\leq C_{(x)}\widetilde{K}_{B,S}^{(\tau)},$$
then for balls $B,S$ with $x\in B\subset S$,
$$\frac{1}{\psi(B)}|f_B-f_S|\leq CC_{(x)}\widetilde{K}_{B,S}^{(\tau)}.$$
\end{lemma}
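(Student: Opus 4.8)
\section*{Proof proposal}

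My plan is to prove the inequality by \emph{chaining}: I will interpose between $B$ and $S$ a finite sequence of concentric balls whose consecutive links all carry a bounded discrete coefficient, apply the standing hypothesis to each link, and then add the resulting inequalities, using Lemma 2.2 to keep the total coefficient comparable to $\widetilde{K}^{(\tau)}_{B,S}$. Before chaining I would first reduce to the concentric case. Since $x\in B\subset S$ forces $d(c_B,c_S)<r_S$, the concentric ball $S':=B(c_B,2r_S)$ contains $S$ and has $r_{S'}\leq 2r_S$; by Lemma 2.1(ii) the pair $S\subset S'$ has bounded coefficient, and $x\in S$, so the hypothesis (for the threshold $C$ chosen below) bounds $|f_S-f_{S'}|$, while the enlargement from $S$ to $S'$ only adds $O(1)$ terms to the coefficient, whence $\widetilde{K}^{(\tau)}_{B,S'}\lesssim\widetilde{K}^{(\tau)}_{B,S}$. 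It therefore suffices to bound $\psi(B)^{-1}|f_B-f_{S'}|$ for the concentric pair $B\subset S'$.

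Put $\kappa:=3+\lfloor\log_\tau 2\rfloor$ and take the constant $C$ in the statement larger than $\kappa+1$ and than the geometric constants produced by Lemma 2.1(ii) (this is the one freedom the statement grants). Starting from $Q_0:=B$ I define $Q_{j+1}$ to be the largest $\tau$-adic concentric dilate of $Q_j$ contained in $S'$ with $\widetilde{K}^{(\tau)}_{Q_j,Q_{j+1}}\leq C$. Because a single $\tau$-dilation never increases the coefficient beyond a geometric constant (Lemma 2.1(ii)), the chain advances at every step and reaches $Q_m=S'$; and maximality forces $\widetilde{K}^{(\tau)}_{Q_j,Q_{j+1}}>\kappa$ for every link except possibly the last, which has bounded coefficient and is absorbed. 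Applying the hypothesis to each link---legitimate since $x\in B\subset Q_j$ and the link coefficient is $\leq C$---gives $|f_{Q_j}-f_{Q_{j+1}}|\leq\psi(Q_j)C_{(x)}\widetilde{K}^{(\tau)}_{Q_j,Q_{j+1}}$, so that
\[
|f_B-f_{S'}|\leq\sum_{j=0}^{m-1}|f_{Q_j}-f_{Q_{j+1}}|\leq C_{(x)}\sum_{j=0}^{m-1}\psi(Q_j)\,\widetilde{K}^{(\tau)}_{Q_j,Q_{j+1}}.
\]
Since the interior links exceed $\kappa$, Lemma 2.2 yields $\sum_j\widetilde{K}^{(\tau)}_{Q_j,Q_{j+1}}<\kappa\,\widetilde{K}^{(\tau)}_{B,S'}\lesssim\widetilde{K}^{(\tau)}_{B,S}$.

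The main obstacle is the presence of the weights $\psi(Q_j)$ in the last sum: Lemma 2.2 controls $\sum_j\widetilde{K}^{(\tau)}_{Q_j,Q_{j+1}}$, but to conclude I must show that the $\psi(Q_j)$ do not accumulate, i.e. that $\psi(Q_j)\lesssim\psi(B)$ uniformly along the chain. This is exactly where $\mu\in\mathcal{D}_\tau$ is needed. The weak doubling condition $N^{(\tau)}_{B,\widetilde{B}^\tau}\leq C_{(\mu)}$ forbids long runs of non-doubling dilations, so the doubling scales met along the chain are boundedly spaced; between them the radius grows by at most $\tau^{C_{(\mu)}}$, and the first inequality in (5) then bounds the corresponding growth of $\psi$ by a fixed factor per doubling scale, tying it to the coefficient increment rather than to the raw number of dilations. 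Carrying out this bookkeeping---balancing the controlled growth of $\psi$ permitted by (5) against the coefficient budget supplied by Lemma 2.2, and likewise discharging the $\psi(S)$-type factor from the reduction step---is the delicate core of the argument. Once it is in place, combining the three displayed bounds and choosing $C$ to dominate the accumulated geometric constants yields $\psi(B)^{-1}|f_B-f_S|\leq CC_{(x)}\widetilde{K}^{(\tau)}_{B,S}$, as claimed.
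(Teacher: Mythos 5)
Your skeleton coincides with the route the paper itself points to: the paper omits the proof of this lemma, saying it follows ``by Lemma 2.2 with the similar method used in \cite[Lemma 2.7]{Hyt12}'', i.e.\ precisely your concentric $\tau$-adic chain with links of coefficient trapped between $\kappa=3+\lfloor\log_\tau 2\rfloor$ and the threshold $C$, summed by Lemma 2.2 (your reduction to the concentric case and the maximality argument, using that each term $\mu(\tau^k B)/\lambda(c_B,\tau^k r_B)\le 1$, are standard and fine). So up to the display $|f_B-f_{S'}|\le C_{(x)}\sum_j\psi(Q_j)\widetilde{K}^{(\tau)}_{Q_j,Q_{j+1}}$ you are reproducing the cited argument.

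However, the step you defer as ``delicate bookkeeping'' --- showing $\sum_j\psi(Q_j)\widetilde{K}^{(\tau)}_{Q_j,Q_{j+1}}\lesssim\psi(B)\widetilde{K}^{(\tau)}_{B,S}$, and likewise discharging the $\psi(S)$ factor from your reduction step --- is a genuine gap, and the mechanism you sketch for it does not exist: condition (5) only bounds the growth of $\psi$ \emph{from above} ($\psi(2B)\le C\psi(B)$), and neither (5) nor $\mu\in\mathcal{D}_{\tau}$ provides any \emph{lower} bound on the per-scale increments of $\widetilde{K}^{(\tau)}$, so there is nothing to tie the growth of $\psi$ to the coefficient budget. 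Concretely, take $(\mathcal{X},d,\mu)=(\mathbb{R},|\cdot|,m_1)$ with $\lambda(x,r)=2r$: every ball is doubling (so $\mu\in\mathcal{D}_\tau$) and every increment equals $1$, whence $\widetilde{K}^{(\tau)}_{B,S}\approx 1+\log_\tau(r_S/r_B)$. With $\psi(x,r)=r$ (which satisfies (5)) and $f(x)=x$, any pair with $\widetilde{K}^{(\tau)}_{B,S}\le C$ has $r_S\le\tau^C r_B$, so $|f_B-f_S|=|c_B-c_S|\le r_S\le\tau^C\psi(B)\widetilde{K}^{(\tau)}_{B,S}$ and the hypothesis holds with $C_{(x)}=\tau^C$ uniformly in $x$; yet for $B=B(0,1)\subset S=B(R,R+2)$ one has $\psi(B)^{-1}|f_B-f_S|=R$, which is not $O(\log R)\approx\widetilde{K}^{(\tau)}_{B,S}$. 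Thus the conclusion itself fails for this $\psi$: no bookkeeping internal to your chain (whose weighted sum is of order $\psi(S')$, not $\psi(B)\widetilde{K}^{(\tau)}_{B,S'}$) can close the argument. The chaining closes in \cite{Hyt12} exactly because there $\psi\equiv 1$; to complete your proof one needs an extra standing assumption (e.g.\ $\psi$ almost decreasing in $r$, or condition (7) normalized by $\psi(S)$ instead of $\psi(B)$) --- an assumption the paper, which invokes the unweighted argument verbatim, also silently requires.
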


\section{Properties and characterizations of $\widetilde{\mathcal{L}}^{\psi,\tau,\gamma}$} 

In this section, we first prove the independence between $\widetilde{\mathcal{L}}^{\psi,\tau,\gamma}$ and some of its parameters, then obtain the John-Nirenberg inequality on $\widetilde{\mathcal{L}}^{\psi,\tau,\gamma}$.

\begin{theorem}
$\widetilde{\mathcal{L}}^{\psi,\tau,\gamma}$ is independent of $\tau>1$.
\end{theorem}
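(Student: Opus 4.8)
The plan is to prove that for any $\tau_1,\tau_2\in(1,\infty)$ the two norms are comparable, so I may assume without loss of generality that $\tau_1<\tau_2$. The defining condition (7) causes no difficulty: by Lemma 2.1(vi) the coefficients $\widetilde{K}^{(\tau_1)}_{B,S}$ and $\widetilde{K}^{(\tau_2)}_{B,S}$ are comparable with constants depending only on $\tau_1,\tau_2,\nu$, so raising to the power $\gamma$ shows that (7) for one value of $\tau$ yields (7) for the other up to a multiplicative constant. Hence the whole problem reduces to transferring the oscillation condition (6) between $\tau_1$ and $\tau_2$.

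One inclusion is immediate. Since $\tau_1<\tau_2$ gives $\tau_1 B\subset\tau_2 B$ and thus $\mu(\tau_1 B)\leq\mu(\tau_2 B)$, for every ball $B$ one has
$$\frac{1}{\psi(B)\,\mu(\tau_2 B)}\int_B|f-f_B|\,d\mu\leq\frac{1}{\psi(B)\,\mu(\tau_1 B)}\int_B|f-f_B|\,d\mu,$$
so (6) for $\tau_1$ forces (6) for $\tau_2$, giving $\|f\|_{\widetilde{\mathcal{L}}^{\psi,\tau_2,\gamma}}\lesssim\|f\|_{\widetilde{\mathcal{L}}^{\psi,\tau_1,\gamma}}$.

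The reverse inclusion is the heart of the matter: assuming (6) and (7) for the larger parameter $\tau_2$, I must recover (6) for the smaller $\tau_1$, and here the non-doubling nature of $\mu$ blocks any direct comparison of $\mu(\tau_2 B)$ with $\mu(\tau_1 B)$. The idea is a finite covering argument. Fix $B$ and set $\epsilon:=(\tau_1-1)/\tau_2\in(0,1)$; by the geometrically doubling condition (Definition 2.1) one covers $B$ by a number $M$ of balls $B_i=B(x_i,\epsilon r_B)$ with $x_i\in B$, where $M$ depends only on $N_0,\tau_1,\tau_2$, and this choice of $\epsilon$ guarantees $\tau_2 B_i\subset\tau_1 B$ for each $i$. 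Writing
$$\int_B|f-f_B|\,d\mu\leq\sum_{i=1}^{M}\int_{B_i}|f-f_{B_i}|\,d\mu+\sum_{i=1}^{M}\mu(B_i)\,|f_{B_i}-f_B|,$$
I would bound the first sum by applying (6) for $\tau_2$ on each $B_i$ together with $\mu(\tau_2 B_i)\leq\mu(\tau_1 B)$, and the second sum by estimating $|f_{B_i}-f_B|\leq|f_{B_i}-f_{\tau_1 B}|+|f_B-f_{\tau_1 B}|$ through (7), noting that $\widetilde{K}^{(\tau_2)}_{B_i,\tau_1 B}$ and $\widetilde{K}^{(\tau_2)}_{B,\tau_1 B}$ are bounded by Lemma 2.1(ii) (the radius ratios $\tau_1/\epsilon$ and $\tau_1$ being bounded), together with $\mu(B_i)\leq\mu(\tau_1 B)$. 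In both sums the factor $\psi(B_i)$ appears, which I would replace by a bounded multiple of $\psi(B)$ using property (5); dividing by $\psi(B)\mu(\tau_1 B)$ and summing the $M$ terms then gives (6) for $\tau_1$ with a constant depending only on $\tau_1,\tau_2$ and $\mathcal{X}$.

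I expect the reverse direction to be the main obstacle, and its delicate points are geometric and must be arranged so that every constant is independent of $B$. The cover has to be fine enough that each dilate $\tau_2 B_i$ fits inside $\tau_1 B$ — which forces $\epsilon<1$ and hence genuinely smaller covering balls — while simultaneously $\psi(B_i)$ stays comparable to $\psi(B)$ and the discrete coefficients $\widetilde{K}^{(\tau_2)}_{B_i,\tau_1 B}$ stay uniformly bounded. Controlling $\psi(B_i)$ by $\psi(B)$ is precisely where the doubling and center-stability of $\psi$ encoded in (5) are essential, and ensuring that the number $M$ of covering balls together with all attendant constants depend only on $\tau_1,\tau_2$ (and not on $r_B$) is the step requiring the most care.
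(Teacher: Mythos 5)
Your proposal is correct and takes essentially the same route as the paper's proof: Lemma 2.1(vi) disposes of condition (7), the monotonicity $\mu(\tau_1 B)\leq\mu(\tau_2 B)$ gives the easy inclusion, and the reverse inclusion uses exactly the paper's covering of $B$ by balls of radius $\frac{\tau_1-1}{\tau_2}\,r_B$ (finitely many by geometric doubling) so that $\tau_2 B_i\subset\tau_1 B$, with $|f_{B_i}-f_B|$ controlled through $f_{\tau_1 B}$ using (7), Lemma 2.1(ii) and Lemma 2.5. Your explicit handling of $\psi(B_i)\approx\psi(B)$ via (5) is a detail the paper leaves implicit, but the argument is the same.
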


\begin{proof}
Let $1<\tau_1<\tau_2$, by Lemma 2.1, for any balls $B\subset S$,
$$\left(\widetilde{K}_{B,S}^{(\tau_1)}\right)^{\gamma}\approx\left(\widetilde{K}_{B,S}^{(\tau_2)}\right)^{\gamma},$$
so only the condition (6) need to be considered. By $\mu(\tau_2B)\geq\mu(\tau_1B)$, $\widetilde{\mathcal{L}}^{\psi,\tau_1,\gamma}\subset\widetilde{\mathcal{L}}^{\psi,\tau_2,\gamma}$.\\
Conversely, assume that $f\in\widetilde{\mathcal{L}}^{\psi,\tau_2,\gamma}$, let $\delta=(\tau_1-1)/\tau_2$, for a fixed ball $B_0=B(x_0,r)$, by \cite[Lemma 2.3]{Hyt10}, there exists balls $B_i=B(x_i,\delta r)$ cover $B_0$, where $x_i\in B_0$, $i\in I$, and the number of elements in $I$ is not more than $N\delta^{-n}$. Since $r+\delta\tau_2r=\tau_1r$, $\tau_2B_i=B(x_i,\delta\tau_2r)\subset B(x_0,\tau_1r)=\tau_1B_0$, thus by Lemma 2.1 and Lemma 2.5,
$$\frac{1}{\psi(B_0)}|f_{B_i}-f_{B_0}|\leq\frac{1}{\psi(B_0)}(|f_{B_i}-f_{\tau_1B_0}|+|f_{\tau_1B_0}-f_{B_0}|)\lesssim\|f\|_{\widetilde{\mathcal{L}}^{\psi,\tau_2,\gamma}},$$
therefore, by Lemma 2.1,
\begin{align*}
\int_{B_0}|f-f_{B_0}|d\mu&\leq\sum_{i\in I}\int_{B_i}|f-f_{B_0}|d\mu\leq\sum_{i\in I}\left(\int_{B_i}|f-f_{B_i}|d\mu+|f_{B_i}-f_{B_0}|\mu(B_i)\right)\\
&\lesssim\sum_{i\in I}\psi(B_0)\|f\|_{\widetilde{\mathcal{L}}^{\psi,\tau_2,\gamma}}\mu(\tau_2B_i)\lesssim\|f\|_{\widetilde{\mathcal{L}}^{\psi,\tau_2,\gamma}}\psi(B_0)\mu(\tau_1B_0),
\end{align*}
hence $\|f\|_{\widetilde{\mathcal{L}}^{\psi,\tau_1,\gamma}}\lesssim\|f\|_{\widetilde{\mathcal{L}}^{\psi,\tau_2,\gamma}}$, which implies that $\widetilde{\mathcal{L}}^{\psi,\tau_2,\gamma}\subset\widetilde{\mathcal{L}}^{\psi,\tau_1,\gamma}$.
\end{proof}

\begin{theorem}
Let $\tau>1$, $\mu\in\mathcal{D}_{\tau}$, then $\widetilde{\mathcal{L}}^{\psi,\tau,\gamma}$ is independent of $\gamma\geq1$.
\end{theorem}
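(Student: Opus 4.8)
The parameter $\gamma$ enters the definition of $\widetilde{\mathcal{L}}^{\psi,\tau,\gamma}$ only through condition (7); condition (6) is entirely free of $\gamma$. Since $\widetilde{K}_{B,S}^{(\tau)}\geq1$ for all balls $B\subset S$ by Definition 2.4, the map $\gamma\mapsto(\widetilde{K}_{B,S}^{(\tau)})^{\gamma}$ is non-decreasing, so for $1\leq\gamma_1\leq\gamma_2$ any $f$ obeying (7) with exponent $\gamma_1$ automatically obeys (7) with exponent $\gamma_2$. Hence $\widetilde{\mathcal{L}}^{\psi,\tau,\gamma_1}\subset\widetilde{\mathcal{L}}^{\psi,\tau,\gamma_2}$, and in particular $\widetilde{\mathcal{L}}^{\psi,\tau,1}\subset\widetilde{\mathcal{L}}^{\psi,\tau,\gamma}$ for every $\gamma\geq1$. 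It therefore suffices to prove the reverse inclusion $\widetilde{\mathcal{L}}^{\psi,\tau,\gamma}\subset\widetilde{\mathcal{L}}^{\psi,\tau,1}$, i.e. to show that membership with some growth exponent $\gamma$ forces the linear estimate (7) with $\gamma=1$.

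The plan for this reverse inclusion is to reduce everything to the bootstrapping statement in Lemma 2.6, which is exactly where the hypothesis $\mu\in\mathcal{D}_{\tau}$ is consumed. Fix $f\in\widetilde{\mathcal{L}}^{\psi,\tau,\gamma}$ and let $C_0>1$ be the threshold constant furnished by Lemma 2.6. The idea is that the power $(\widetilde{K}_{B,S}^{(\tau)})^{\gamma}$ and the linear factor $\widetilde{K}_{B,S}^{(\tau)}$ are comparable as long as $\widetilde{K}_{B,S}^{(\tau)}$ stays bounded. Concretely, for any $x\in\mathcal{X}$ and balls $B\subset S$ with $x\in B$ and $\widetilde{K}_{B,S}^{(\tau)}\leq C_0$, the bound $1\leq\widetilde{K}_{B,S}^{(\tau)}\leq C_0$ gives $(\widetilde{K}_{B,S}^{(\tau)})^{\gamma}\leq C_0^{\gamma-1}\widetilde{K}_{B,S}^{(\tau)}$, so condition (7) yields $\tfrac{1}{\psi(B)}|f_B-f_S|\leq\|f\|_{\widetilde{\mathcal{L}}^{\psi,\tau,\gamma}}\,C_0^{\gamma-1}\,\widetilde{K}_{B,S}^{(\tau)}$. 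This is precisely the hypothesis required by Lemma 2.6, and --- a point worth checking --- the constant $\|f\|_{\widetilde{\mathcal{L}}^{\psi,\tau,\gamma}}C_0^{\gamma-1}$ is independent of $x$.

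Applying Lemma 2.6 then upgrades this to $\tfrac{1}{\psi(B)}|f_B-f_S|\leq C_0\,\|f\|_{\widetilde{\mathcal{L}}^{\psi,\tau,\gamma}}\,C_0^{\gamma-1}\,\widetilde{K}_{B,S}^{(\tau)}$ for all balls $B\subset S$ with $x\in B$; choosing $x=c_B$ covers an arbitrary pair $B\subset S$, so $f$ satisfies (7) with $\gamma=1$ and $\|f\|_{\widetilde{\mathcal{L}}^{\psi,\tau,1}}\lesssim\|f\|_{\widetilde{\mathcal{L}}^{\psi,\tau,\gamma}}$. This gives $\widetilde{\mathcal{L}}^{\psi,\tau,\gamma}\subset\widetilde{\mathcal{L}}^{\psi,\tau,1}$, and combined with the first paragraph yields $\widetilde{\mathcal{L}}^{\psi,\tau,\gamma}=\widetilde{\mathcal{L}}^{\psi,\tau,1}$ for every $\gamma\geq1$, which is the claimed independence. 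I expect no genuine obstacle beyond Lemma 2.6 itself: once that bootstrapping lemma is in hand, the argument is a short comparison of $(\widetilde{K}_{B,S}^{(\tau)})^{\gamma}$ with $\widetilde{K}_{B,S}^{(\tau)}$ on the bounded range. The only care needed is to confirm that condition (6) plays no role (it is identical for all $\gamma$) and that the constant produced in verifying the hypothesis of Lemma 2.6 does not depend on the base point $x$.
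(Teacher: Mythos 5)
Your proposal is correct and takes essentially the same route as the paper's proof: the easy inclusion $\widetilde{\mathcal{L}}^{\psi,\tau,1}\subset\widetilde{\mathcal{L}}^{\psi,\tau,\gamma}$ from $\widetilde{K}_{B,S}^{(\tau)}\geq1$, then the comparison $(\widetilde{K}_{B,S}^{(\tau)})^{\gamma}\leq C^{\gamma-1}\widetilde{K}_{B,S}^{(\tau)}$ on the range $\widetilde{K}_{B,S}^{(\tau)}\leq C$ followed by the bootstrap of Lemma 2.6, which is exactly where the hypothesis $\mu\in\mathcal{D}_{\tau}$ is consumed. Your explicit check that the constant fed into Lemma 2.6 is uniform in the base point $x$ is a detail the paper leaves implicit, but the argument is the same.
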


\begin{proof}
Since $\widetilde{K}_{B,S}^{(\tau)}\geq1$, for $\gamma\geq1$, $\widetilde{\mathcal{L}}^{\psi,\tau,1}\subset\widetilde{\mathcal{L}}^{\psi,\tau,\gamma}$. Conversely, assume that $f\in\widetilde{\mathcal{L}}^{\psi,\tau,\gamma}$, then for $x\in B\subset S$ such that $\widetilde{K}_{B,S}^{(\tau)}\leq C$, we have
$$\frac{1}{\psi(B)}|f_B-f_S|\leq\left(\widetilde{K}_{B,S}^{(\tau)}\right)^{\gamma}\|f\|_{\widetilde{\mathcal{L}}^{\psi,\tau,\gamma}}\leq C^{\gamma-1}\widetilde{K}_{B,S}^{(\tau)}\|f\|_{\widetilde{\mathcal{L}}^{\psi,\tau,\gamma}}.$$
Therefore, by Lemma 2.6, for any balls $B\subset S$,
$$\frac{1}{\psi(B)}|f_B-f_S|\lesssim C^{\gamma-1}\widetilde{K}_{B,S}^{(\tau)}\|f\|_{\widetilde{\mathcal{L}}^{\psi,\tau,\gamma}},$$
which implies that $\|f\|_{\widetilde{\mathcal{L}}^{\psi,\tau,1}}\lesssim \|f\|_{\widetilde{\mathcal{L}}^{\psi,\tau,\gamma}}$, thus $\widetilde{\mathcal{L}}^{\psi,\tau,\gamma}\subset\widetilde{\mathcal{L}}^{\psi,\tau,1}$.
\end{proof}

Even if the $\tau$-weak doubling condition does not hold, we still consider $\gamma=1$ in general. By Theorem 3.1, the space $\widetilde{\mathcal{L}}^{\psi,\tau,1}$ can be written as $\widetilde{\mathcal{L}}^{\psi}$.

The proof of the John-Nirenberg inequality needs some lemmas.

\begin{lemma}
Let $\alpha>1$, for all balls $B\subset\mathcal{X}$, there holds $\widetilde{K}_{B,\widetilde{B}^{\alpha}}^{(\alpha)}\leq C$, where $\widetilde{B}^{\alpha}$ is defined in Remark 2.2.
\end{lemma}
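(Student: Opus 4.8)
The plan is to exploit the minimality of $\widetilde{B}^{\alpha}$ to make the summands defining $\widetilde{K}_{B,\widetilde{B}^{\alpha}}^{(\alpha)}$ decay geometrically. Write $B_k:=\alpha^k B$ and $\lambda_k:=\lambda(c_B,\alpha^k r_B)$, and set $i:=N_{B,\widetilde{B}^{\alpha}}^{(\alpha)}$, so that $\widetilde{B}^{\alpha}=\alpha^i B=B_i$, $r_{\widetilde{B}^{\alpha}}=\alpha^i r_B$ forces $N_{B,\widetilde{B}^{\alpha}}^{(\alpha)}=i$, and
\[
\widetilde{K}_{B,\widetilde{B}^{\alpha}}^{(\alpha)}=1+\sum_{k=-\lfloor\log_{\alpha}2\rfloor}^{i}\frac{\mu(B_k)}{\lambda_k}.
\]
Since $B_i$ is the \emph{smallest} $(\alpha,\beta_\alpha)$-doubling ball of the form $\alpha^j B$ with $j\in\mathbb{N}$, each of $B_0,\dots,B_{i-1}$ fails to be $(\alpha,\beta_\alpha)$-doubling, that is, $\mu(B_{k+1})>\beta_\alpha\mu(B_k)$ for $0\le k\le i-1$. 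Iterating this inequality yields the measure decay $\mu(B_k)\le\beta_\alpha^{-(i-k)}\mu(B_i)$ for every $0\le k\le i$, which will be the principal source of smallness.

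Next I would pair this with a matching control of the denominators coming from the upper doubling condition. From $\lambda(x,r)\le C_{(\lambda)}\lambda(x,r/2)$ one obtains, for any $a\ge1$,
\[
\lambda(x,ar)\le\lambda\!\left(x,2^{\lceil\log_2 a\rceil}r\right)\le C_{(\lambda)}^{\lceil\log_2 a\rceil}\lambda(x,r)\le C_{(\lambda)}\,a^{\nu}\lambda(x,r),
\]
where $\nu=\log_2 C_{(\lambda)}$. Applying this with $a=\alpha^{i-k}$ and base radius $\alpha^k r_B$ gives $\lambda_i\le C_{(\lambda)}\alpha^{\nu(i-k)}\lambda_k$, hence $\lambda_k\ge C_{(\lambda)}^{-1}\alpha^{-\nu(i-k)}\mu(B_i)$ after invoking $\mu(B_i)\le\lambda_i$. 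Combining this with the measure decay of the previous step, every summand with $0\le k\le i$ obeys
\[
\frac{\mu(B_k)}{\lambda_k}\le C_{(\lambda)}\left(\frac{\alpha^{\nu}}{\beta_\alpha}\right)^{i-k}.
\]
Because $\beta_\alpha=\alpha^{\max\{n_0,\nu\}}+30^{n_0}+30^{\nu}>\alpha^{\nu}$, the quotient $q:=\alpha^{\nu}/\beta_\alpha$ lies in $(0,1)$, so $\sum_{k=0}^{i}C_{(\lambda)}q^{i-k}\le C_{(\lambda)}/(1-q)$ is bounded uniformly in $i$.

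Finally I would dispose of the finitely many terms with $-\lfloor\log_{\alpha}2\rfloor\le k<0$: each satisfies $\mu(B_k)\le\lambda_k$ by upper doubling, so these contribute at most $\lfloor\log_{\alpha}2\rfloor$. Adding the leading $1$ then produces $\widetilde{K}_{B,\widetilde{B}^{\alpha}}^{(\alpha)}\le 1+\lfloor\log_{\alpha}2\rfloor+C_{(\lambda)}/(1-q)=:C$, a constant depending only on $\alpha$ and $\mathcal{X}$. I expect the one delicate point to be the growth estimate for $\lambda$: one must use the \emph{direct} bound $\lambda_i\le C_{(\lambda)}\alpha^{\nu(i-k)}\lambda_k$ rather than telescoping the one-step estimate $\lambda_{k+1}\le C_{(\lambda)}^{\lceil\log_2\alpha\rceil}\lambda_k$. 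The latter accumulates an extra factor $C_{(\lambda)}$ at every scale, and its per-step ratio $C_{(\lambda)}^{\lceil\log_2\alpha\rceil}/\beta_\alpha$ need not be less than $1$, whereas the direct estimate isolates a single harmless factor $C_{(\lambda)}$ and leaves the genuinely summable base $\alpha^{\nu}/\beta_\alpha<1$, which is exactly what the choice of $\beta_\alpha$ guarantees.
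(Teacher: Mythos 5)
Your proof is correct and takes essentially the same route as the paper's: there, the failure of $(\alpha,\beta_\alpha)$-doubling at the intermediate scales gives $\mu(\alpha^kB)\le\beta_\alpha^{k-j}\mu(\alpha^jB)$, and this is paired with the direct growth bound $\lambda(c_B,\alpha^jr_B)\le C_{(\lambda)}\gamma^{j-k}\lambda(c_B,\alpha^kr_B)$ for $\gamma=C_{(\lambda)}^{\log_2\alpha}=\alpha^{\nu}$, after which the series with ratio $\gamma/\beta_\alpha<1$ is summed exactly as you do. Your closing observation about preferring the direct $\lambda$-estimate over telescoping the one-step bound is precisely the choice the paper encodes in its definition of $\gamma$.
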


\begin{proof}
Set $\gamma=C_{(\lambda)}^{\log_2\alpha}$, then $\beta>\gamma$, and
$$\lambda(x,\alpha^kr)\leq C_{(\lambda)}\gamma^k\lambda(x,r).$$
Let $\widetilde{B}^{\alpha}=\alpha^jB$, by the minimality of $j$,
$$\widetilde{K}_{B,\widetilde{B}^{\alpha}}^{(\alpha)}\leq1+\sum_{k=-\lfloor\log_{\alpha}2\rfloor}^{j}\frac{\mu(\alpha^kB)}{\lambda(c_B,\alpha^kr_B)}\lesssim1+\sum_{k=1}^{j}\frac{\beta^{k-j}\mu(\alpha^jB)}{\gamma^{k-j}\lambda(c_B,\alpha^jr_B)}\lesssim1+\sum_{k=1}^{j}\left(\frac{\gamma}{\beta}\right)^{j-k}\leq C,$$
which completes the proof.
\end{proof}

\begin{lemma}
Let $C_1,C_2>0$, $f\in\widetilde{\mathcal{L}}^{\psi}$, for all balls $B_1=B(x_1,r_1),B_2=B(x_2,r_2)$ with
$$C_1d(x_1,x_2)\leq \max(r_1,r_2)\leq C_2d(x_1,x_2),$$
there holds
$$\frac{1}{\psi(B_1)}|f_{B_1}-f_{B_2}|\lesssim\lVert f\rVert_{\widetilde{\mathcal{L}}^{\psi}}.$$
\end{lemma}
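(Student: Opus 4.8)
The plan is to convert the two-ball estimate into two nested-ball estimates by inserting a single ambient ball that contains both $B_1$ and $B_2$. First I would extract the geometric content of the hypothesis: since $C_1 d(x_1,x_2)\le \max(r_1,r_2)\le C_2 d(x_1,x_2)$, the distance $d(x_1,x_2)$ is comparable to $R:=\max(r_1,r_2)$, and in the configuration at hand the two radii are themselves of one order, so that $r_1\approx r_2\approx d(x_1,x_2)$. Setting $\Lambda:=1+1/C_1$ and $S:=B\bigl(x_1,\Lambda R\bigr)$, I would check directly that $B_1\subset S$ and $B_2\subset S$: the first holds because $r_1\le R\le\Lambda R$, and the second because $d(x_1,x_2)+r_2\le R/C_1+R=\Lambda R$. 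Thus $S$ is a common ball whose radius $r_S=\Lambda R$ is comparable to each of $r_1,r_2$.

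Next I would apply the triangle inequality $|f_{B_1}-f_{B_2}|\le |f_{B_1}-f_S|+|f_S-f_{B_2}|$ and estimate each term through the nested-ball condition (7) for $f\in\widetilde{\mathcal{L}}^{\psi}$ (the case $\gamma=1$). Since $r_S=\Lambda R$ is comparable to each $r_{B_i}$, Lemma 2.1(ii) (with $\alpha$ a constant depending on $C_1,C_2$) yields $\widetilde K_{B_i,S}^{(\tau)}\lesssim 1$ for $i=1,2$. Hence (7) gives $|f_{B_i}-f_S|\le \psi(B_i)\,\widetilde K_{B_i,S}^{(\tau)}\,\|f\|_{\widetilde{\mathcal{L}}^{\psi}}\lesssim \psi(B_i)\,\|f\|_{\widetilde{\mathcal{L}}^{\psi}}$, so that $|f_{B_1}-f_{B_2}|\lesssim\bigl(\psi(B_1)+\psi(B_2)\bigr)\|f\|_{\widetilde{\mathcal{L}}^{\psi}}$.

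It then remains to replace $\psi(B_2)$ by $\psi(B_1)$. Because $B_1$ and $B_2$ have comparable radii and mutually close centres, the regularity condition (5) transfers the weight: applying the doubling estimate $\psi(2B)\le C\psi(B)$ to match radii and the centre-shift comparability $\tfrac1C\psi(B)\le\psi(B')\le C\psi(B)$ to move from $x_2$ to $x_1$, one gets $\psi(B_2)\approx\psi(B_1)$. Dividing the previous display by $\psi(B_1)$ then yields $\frac{1}{\psi(B_1)}|f_{B_1}-f_{B_2}|\lesssim\|f\|_{\widetilde{\mathcal{L}}^{\psi}}$, with implied constant depending only on $C_1,C_2,\tau$ and the constant in (5).

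The main obstacle is keeping the Hytönen coefficients $\widetilde K_{B_i,S}^{(\tau)}$ bounded: this works precisely because $S$ has radius comparable to \emph{both} $r_1$ and $r_2$, which is exactly the point at which the comparability of the two radii (a lower bound on $\min(r_1,r_2)$) is indispensable — if one ball were far smaller than $S$, the corresponding $\widetilde K_{B_i,S}^{(\tau)}$ would grow logarithmically and the estimate would fail. A secondary technical subtlety is the weight transfer in the last step: since $d(x_1,x_2)$ may exceed the common radius by a bounded factor, condition (5) must be invoked not once but along a short chain of intermediate balls of a fixed radius, the number of links being controlled by $C_1,C_2$.
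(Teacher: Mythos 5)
Your argument is correct and essentially the paper's own: the paper likewise inserts a single containing ball centered at $x_1$ of radius comparable to $\max(r_1,r_2)$ (its $mB_1$ is your $S$), controls the two resulting nested-ball differences by condition (7) together with boundedness of the discrete coefficient when the radius ratio is bounded (you cite Lemma 2.1(ii); the paper uses Lemma 2.5 for $|f_{B_1}-f_{mB_1}|$ and estimates the sum defining $\widetilde{K}^{(\tau)}_{B_2,mB_1}$ by hand), and silently performs the same transfer $\psi(B_2)\approx\psi(B_1)$ via condition (5) that you carry out explicitly. The one caveat — your claim $r_1\approx r_2\approx d(x_1,x_2)$ is not literally implied by the stated hypothesis, which constrains only $\max(r_1,r_2)$ — applies equally to the paper, whose asserted containments $B_1\cup B_2\subset mB_1\subset MB_2$ with uniform $m,M$ tacitly force $\min(r_1,r_2)\gtrsim d(x_1,x_2)$, i.e.\ exactly the strengthened hypothesis under which the lemma is in fact invoked in the proof of Theorem 3.3.
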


\begin{proof}
By the assumption of this lemma, there exists $m,M>0$ such that $B_1\cup B_2\subset mB_1$, and $mB_1\subset MB_2$. Then,
$$\frac{1}{\psi(B_1)}|f_{B_1}-f_{B_2}|\leq\frac{1}{\psi(B_1)}|f_{B_1}-f_{mB_1}|+\frac{1}{\psi(B_1)}|f_{mB_1}-f_{B_2}|.$$
By Lemma 2.5,
$$\frac{1}{\psi(B_1)}|f_{B_1}-f_{mB_1}|\lesssim\|f\|_{\widetilde{\mathcal{L}}^{\psi}},$$
and
$$\frac{1}{\psi(B_1)}|f_{mB_1}-f_{B_2}|\lesssim\widetilde{K}^{(\tau)}_{B_2,mB_1}\|f\|_{\widetilde{\mathcal{L}}^{\psi}}\leq\left(1+\sum_{k=-\lfloor\log_{\tau}2\rfloor}^{N^{(\tau)}_{B_2,mB_1}}\frac{\mu(\tau^kB_2)}{\lambda(c_{B_1},\tau^kmr_{B_1})}\right)\|f\|_{\widetilde{\mathcal{L}}^{\psi}}\lesssim\|f\|_{\widetilde{\mathcal{L}}^{\psi}},$$
which completes the proof.
\end{proof}

\begin{theorem}
Let $\tau>1$, $\mu\in\mathcal{D}_{\tau}$, then there exists $C>0$ such that, for any $f\in \widetilde{\mathcal{L}}^{\psi}$, $t>0$ and ball $B=B(x_0,r)\subset\mathcal{X}$,
$$\mu\left(\left\{x\in B:\frac{|f(x)-f_{B}|}{\psi(B)}>t\right\}\right)\leq2\exp\left(-\frac{Ct}{\|f\|_{\widetilde{\mathcal{L}}^{\psi}}}\right)\mu(\tau B).$$
\end{theorem}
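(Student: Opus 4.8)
The plan is to carry out the classical John--Nirenberg iteration, adapted to the $\tau$-average $\frac{1}{\mu(\tau B)}\int_{B}$ and to the non-constant weight $\psi$. After dividing $f$ by $\|f\|_{\widetilde{\mathcal{L}}^{\psi}}$ I may assume $\|f\|_{\widetilde{\mathcal{L}}^{\psi}}=1$. I introduce the normalized distribution
\[
A(t):=\sup_{B}\frac{1}{\mu(\tau B)}\,\mu\!\left(\left\{x\in B:|f(x)-f_{B}|>t\,\psi(B)\right\}\right),
\]
so that $A$ is non-increasing, $A(0)\leq 1$, and by Chebyshev together with (6) one has $A(t)\leq 1/t$. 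The whole theorem then reduces to a self-improving recursion: there exist a fixed large level $\Lambda>0$ and $\theta\in(0,1)$ such that $A(s+\Lambda)\leq\theta\,A(s)$ for all $s\geq 0$. Granting this, $A(n\Lambda)\leq\theta^{n}$ for every $n\in\mathbb{N}$, and writing an arbitrary $t$ as $t\in[n\Lambda,(n+1)\Lambda)$ gives $A(t)\leq\theta^{n}\leq\theta^{-1}\exp(-Ct)$ with $C=|\log\theta|/\Lambda$; choosing $\lambda_{0}$ below so that $\theta\in[1/2,1)$ turns $\theta^{-1}$ into the prefactor $2$ and, after restoring $\|f\|_{\widetilde{\mathcal{L}}^{\psi}}$, yields exactly the assertion.

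To produce the recursion I fix a ball $B$, set $h:=f-f_{B}$, and perform a Calder\'{o}n--Zygmund decomposition of $h$ over $B$ at a fixed high level $\lambda_{0}$, selecting the stopping balls among doubling balls. Using the geometrically doubling condition (Definition 2.1) to run a Besicovitch-type covering of the level set of the $\tau$-maximal function $\sup_{x\in B'}\frac{1}{\mu(\tau B')}\int_{B'}|h|\,d\mu$, I extract a family of doubling balls $\{B_{j}\}$ of bounded overlap with $\frac{1}{\mu(\tau B_{j})}\int_{B_{j}}|h|\,d\mu>\lambda_{0}\,\psi(B)$ on each $B_{j}$, while $|h(x)|\leq\lambda_{0}\,\psi(B)$ for $\mu$-a.e.\ $x\in B\setminus\bigcup_{j}B_{j}$; the latter bound is legitimate because Lebesgue differentiation holds along shrinking doubling balls, on which the $\tau$-average and the ordinary average are comparable. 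Since $\int_{B_{j}}|h|\,d\mu>\lambda_{0}\,\psi(B)\,\mu(\tau B_{j})$ on each selected ball, the bounded overlap together with (6) applied on a fixed dilate $cB$ of $B$ yields the packing estimate $\sum_{j}\mu(\tau B_{j})\leq(C/\lambda_{0})\,\mu(\tau B)$; here $\mu\in\mathcal{D}_{\tau}$, Lemma 3.1 and Lemma 2.1 are used to keep $\widetilde{K}^{(\tau)}_{B,\widetilde{B}^{\tau}}$ bounded and hence to control $\int_{cB}|h|\,d\mu$.

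The decisive point is the transfer from $f_{B}$ to the local averages $f_{B_{j}}$. Because each $B_{j}$ is doubling, its maximality gives $\frac{1}{\mu(\tau B_{j})}\int_{B_{j}}|h|\,d\mu\leq C\lambda_{0}\,\psi(B)$, whence
\[
|f_{B_{j}}-f_{B}|\leq\frac{1}{\mu(B_{j})}\int_{B_{j}}|h|\,d\mu\leq\frac{\mu(\tau B_{j})}{\mu(B_{j})}\cdot C\lambda_{0}\,\psi(B)\leq c\,\lambda_{0}\,\psi(B),
\]
where $\mu(\tau B_{j})\leq\beta\mu(B_{j})$ is the doubling of $B_{j}$; note that this step needs only the averaging bound and doubling, not (7). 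Consequently, for $x\in B_{j}$ one has $|f(x)-f_{B_{j}}|\geq|f(x)-f_{B}|-c\lambda_{0}\psi(B)$, so with $\Lambda:=(c+1)\lambda_{0}$,
\[
\left\{x\in B:|f-f_{B}|>(s+\Lambda)\psi(B)\right\}\subset\bigcup_{j}\left\{x\in B_{j}:|f-f_{B_{j}}|>s\,\psi(B)\right\}.
\]
Using $B_{j}\subset cB$ and the regularity (5) to compare $\psi(B_{j})$ with $\psi(B)$, the level $s\,\psi(B)$ on $B_{j}$ dominates $s\,\psi(B_{j})$, so $\mu\{x\in B_{j}:|f-f_{B_{j}}|>s\,\psi(B)\}\leq A(s)\,\mu(\tau B_{j})$. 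Summing over $j$ and invoking the packing estimate gives $\mu\{x\in B:|f-f_{B}|>(s+\Lambda)\psi(B)\}\leq A(s)\sum_{j}\mu(\tau B_{j})\leq(C/\lambda_{0})A(s)\,\mu(\tau B)$; taking the supremum over $B$ and fixing $\lambda_{0}$ so that $\theta:=C/\lambda_{0}<1$ closes the recursion.

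I expect the main obstacle to be precisely the presence of the non-constant weight $\psi$ in the distribution function. In the classical ($\psi\equiv 1$) argument the estimate on $B_{j}$ is self-similar after translation and dilation, but here rescaling to $B_{j}$ replaces $\psi(B)$ by $\psi(B_{j})$, and the recursion survives only if this replacement does not degrade the level, i.e.\ if $\psi(B_{j})$ does not exceed $\psi(B)$ up to an admissible constant. Controlling this comparison uniformly over all stopping balls $B_{j}\subset cB$, no matter how small, is exactly where the regularity hypothesis (5) on $\psi$ is indispensable; combined with the $\tau$-weak doubling condition $\mu\in\mathcal{D}_{\tau}$ — needed to select the doubling stopping balls, to pass between $\tau$-averages and ordinary averages, and to keep the coefficients bounded via Lemma 3.1 — it is what lets the geometric recursion $A(s+\Lambda)\leq\theta A(s)$ close and hence produces the exponential decay.
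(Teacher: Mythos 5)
Your overall blueprint (stopping-time decomposition over doubling balls, a covering argument, a packing estimate of size $C/\lambda_0$, then iteration) is the same as the paper's, but the specific recursion you run has a genuine gap at exactly the point you flag as delicate: the renormalization from $\psi(B)$ to $\psi(B_j)$. You define $A(t)$ with the weight $\psi(B')$ of the ball over which the oscillation is measured, and then claim that ``the level $s\,\psi(B)$ on $B_j$ dominates $s\,\psi(B_j)$,'' i.e.\ $\psi(B_j)\leq\psi(B)$ with constant $1$. Condition (5) gives at best $\psi(B_j)\leq C\,\psi(B)$ with some $C>1$ (and even this upper comparison for arbitrarily small $B_j\subset cB$ is not literally contained in (5); it needs $\psi$ almost non-decreasing in $r$, as in the model case $\psi=\lambda^{\alpha}$). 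With the honest constant, your step reads $\mu\{x\in B_j:|f-f_{B_j}|>s\psi(B)\}\leq A(s/C)\,\mu(\tau B_j)$, so the recursion becomes $A(s+\Lambda)\leq\theta\,A(s/C)$. This is fatal: iterating it, $A(u)\leq\theta^{k}$ only for $u\gtrsim\Lambda C^{k}$, i.e.\ $A(u)\lesssim u^{-|\log\theta|/\log C}$ --- polynomial, not exponential, decay. The weight mismatch enters the level \emph{multiplicatively} in your sup-function formulation, whereas the additive transfer $|f_{B_j}-f_B|\leq c\lambda_0\psi(B)$ (which you do correctly) is harmless. The paper avoids this trap structurally: it never re-normalizes. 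All level sets at every generation are measured against the fixed $\psi(B)$ of the original ball (note the displayed iteration, where $\psi(B)$ sits in every denominator $|f(x)-f_{B^{\lambda_1,\dots,\lambda_m}}|/\psi(B)$), and every comparison of $\psi$-values ($\psi(B'_x)/\psi(B)$, $\psi(\sqrt{\tau}B)/\psi(B)$, etc.) appears only in additive error terms that are absorbed once and for all into the choice of $G\geq\max_i C_i\|f\|_{\widetilde{\mathcal{L}}^{\psi}}$, with a single constant uniform over all generations since every descendant ball lies in a fixed dilate of $B$. To repair your argument you would have to abandon the self-improving recursion for $A$ and prove the generational estimate $\mu\{x\in B:|f-f_B|>2mG\,\psi(B)\}\leq 2^{-m}\mu(\tau B)$ directly, which is precisely the paper's route.

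A secondary but real problem is your covering step: the Besicovitch covering theorem (bounded overlap for balls of arbitrary radii centered in a set) fails in general geometrically doubling metric spaces, so you cannot invoke a ``Besicovitch-type covering'' here. The paper instead uses the Vitali $5r$-covering lemma (via \cite[Theorem 1.2]{Hei01} and \cite[Lemma 2.5]{Hyt10}), takes the stopping balls $B'_{x_\lambda}$ pairwise \emph{disjoint}, and covers with the dilates $B^{\lambda_1}=5B'_{x_\lambda}$; the price of the dilation --- the averages $f_{5B'_{x_\lambda}}$ no longer being stopping averages --- is then paid by the extra terms $|f_B-f_{B''_{x_\lambda}}|$ and $|f_{B''_{x_\lambda}}-f_{5B'_{x_\lambda}}|$, controlled through the smallest doubling dilate $B''_x=\widetilde{(\alpha B'_x)}^{\alpha}$ together with Lemma 2.5, Lemma 3.1 (where $\mu\in\mathcal{D}_{\tau}$ enters, as you correctly anticipated) and Lemma 3.2. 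Your remaining ingredients --- the a.e.\ selection of shrinking doubling balls for the stopping argument ($\mu$-a.e.\ differentiation along doubling balls, i.e.\ \cite[Corollary 3.6]{Hyt10}), the doubling bound $\mu(\tau B_j)\leq\beta\mu(B_j)$ in the transfer estimate, and the packing estimate --- do match the paper's proof.
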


\begin{proof}
Let $\alpha=5\tau$, $G>0$ will be determined later. By \cite[Corollary 3.6]{Hyt10}, for $\mu$-a.e. $x\in B$ with $|f(x)-f_{B}|/\psi(B)>G$, there exist $(\alpha,\beta)$-doubling balls $B(x,\alpha^{-i}r)$ for $i\in\mathbb{N}$ making
$$B(x,\alpha^{-i}r)\subset\sqrt{\tau}B\text{ \ and \ }\frac{|f_{B(x,\alpha^{-i}r)}-f_{B}|}{\psi(B)}>G.$$
Let $B'_x$ be the biggest ball satisfying such properties, then,
\begin{align*}
\frac{1}{\mu(B'_x)}\int_{B'_x}\frac{|f(y)-f_{B}|}{\psi(B)}d\mu(y)&\geq\frac{|f_{B'_x}-f_{B}|}{\psi(B)}-\frac{1}{\mu(B'_x)}\int_{B'_x}\frac{|f(y)-f_{B'_x}|}{\psi(B)}d\mu(y)\\
&>G-\frac{\psi(B'_x)}{\psi(B)}\beta\|f\|_{\widetilde{\mathcal{L}}^{\psi}}=G-C_1\|f\|_{\widetilde{\mathcal{L}}^{\psi}}\geq\frac{G}{2},
\end{align*}
provided that $G\geq 2C_1\|f\|_{\widetilde{\mathcal{L}}^{\psi}}$.\\
Denote $B''_x:=\widetilde{(\alpha B'_x)}^{\alpha}$, by the maximality of $B'_x$,
$$B''_x\not\subset\sqrt{\tau}B\text{ \ or \ }\frac{|f_{B''_x}-f_{B}|}{\psi(B)}\leq G.$$
Moreover, if $B''_x\not\subset\sqrt{\tau}B$, let $\alpha^jB'_x$ be the smallest ball $\alpha^kB'_x\ (k\in\mathbb{N})$ satisfying $\alpha^kB'_x\not\subset\sqrt{\tau}B$, there holds
$$r_{\alpha^jB'_x}\approx r_{B}\text{ \ and \ }\alpha^jB'_x\subset3\alpha\sqrt{\tau}B.$$
Therefore, by Lemma 2.5, Lemma 3.1 and Lemma 3.2,
\begin{align*}
\frac{|f_{B''_x}-f_{B}|}{\psi(B)}&\leq\frac{|f_{B''_x}-f_{\alpha^jB'_x}|}{\psi(B)}+\frac{|f_{\alpha^jB'_x}-f_{3\alpha\sqrt{\tau}B}|}{\psi(B)}+\frac{|f_{3\alpha\sqrt{\tau}B}-f_{B}|}{\psi(B)}\\
&\lesssim\frac{\psi(B''_x)}{\psi(B)}\|f\|_{\widetilde{\mathcal{L}}^{\psi}}+\frac{\psi(3\alpha\sqrt{\tau}B)}{\psi(B)}\left(\widetilde{K}_{\alpha^jB'_x,3\alpha\sqrt{\tau}B}^{(\tau)}+\widetilde{K}_{B,3\alpha\sqrt{\tau}B}^{(\tau)}\right)\|f\|_{\widetilde{\mathcal{L}}^{\psi}}\\
&\lesssim C_2\|f\|_{\widetilde{\mathcal{L}}^{\psi}}\leq G,
\end{align*}
provided that $G\geq C_2\|f\|_{\widetilde{\mathcal{L}}^{\psi}}$. Thus, if $G\geq C_2\|f\|_{\widetilde{\mathcal{L}}^{\psi}}$, there holds
$$\frac{|f_{B''_x}-f_{B}|}{\psi(B)}\leq G.$$
Furthermore, by \cite[Theorem 1.2]{Hei01} and\cite[Lemma 2.5]{Hyt10}, there exist pairwise disjoint balls $\{B'_{x_\lambda}\}_{\lambda\in \Lambda}$ satisfying $x_\lambda\in B$ for any $\lambda\in \Lambda$, and
$$B\subset\bigcup_{x\in B}B'_x\subset\bigcup_{\lambda\in \Lambda}5B'_{x_\lambda}.$$
Denote $B^{\lambda_1}=5B'_{x_\lambda}$ for $\lambda\in \Lambda$, then, for any integer $m>1$, if $x\in B$ and $|f(x)-f_{B}|/\psi(B)>mG$, there exists $\lambda\in \Lambda$ such that $x\in B^{\lambda_1}$. By Lemma 3.1,
\begin{align*}
\frac{|f(x)-f_{B^{\lambda_1}}|}{\psi(B)}&\geq\frac{|f(x)-f_{B}|}{\psi(B)}-\frac{|f_{B}-f_{B''_{x_\lambda}}|}{\psi(B)}-\frac{|f_{B''_{x_\lambda}}-f_{5B'_{x_\lambda}}|}{\psi(B)}\\
&>mG-G-\frac{\psi(B''_{x_\lambda})}{\psi(B)}\widetilde{K}_{5B'_{x_\lambda},B''_{x_\lambda}}^{(\tau)}\\
&\geq (m-1)G-C_3\|f\|_{\widetilde{\mathcal{L}}^{\psi}}\geq (m-2)G,
\end{align*}
provided that $G\geq C_3\|f\|_{\widetilde{\mathcal{L}}^{\psi}}$.\\
By Lemma 2.5, we further get
\begin{align*}
\sum_{\lambda\in \Lambda}\mu(\tau B^{\lambda_1})&=\sum_{\lambda\in \Lambda}\mu(\alpha B'_{x_\lambda})\leq\beta\sum_{\lambda\in \Lambda}\mu(B'_{x_\lambda})\leq\frac{2\beta}{G}\sum_{\lambda\in\Lambda}\int_{B'_{x_\lambda}}\frac{|f(y)-f_{B}|}{\psi(B)}d\mu(y)\\
&\leq\frac{2\beta}{G}\left(\int_{\sqrt{\tau}B}\frac{|f(y)-f_{\sqrt{\tau}B}|}{\psi(B)}d\mu(y)+\frac{|f_{\sqrt{\tau}{B}}-f_{B}|\mu(\sqrt{\tau}B)}{\psi(B)}\right)\\
&\lesssim\frac{1}{G}\frac{\psi(\sqrt{\tau}B)}{\psi(B)}\mu(\tau B)\|f\|_{\widetilde{\mathcal{L}}^{\psi}}\leq\frac{C_4}{G}\mu(\tau B)\|f\|_{\widetilde{\mathcal{L}}^{\psi}}\leq\frac{1}{2}\mu(\tau B),
\end{align*}
provided that $G\geq2C_4\|f\|_{\widetilde{\mathcal{L}}^{\psi}}$.\\
Replace $B$ with $B^{\lambda_1}$ and iteratively define $B^{\lambda_1,\lambda_2}$, $B^{\lambda_1,\lambda_2,\lambda_3}$ and so on. The similar calculation to above implies that
\begin{align*}
\left\{x\in B:\frac{|f(x)-f_{B}|}{\psi(B)}>2mG\right\}&\subset\bigcup_{\lambda_1}\left\{x\in B^{\lambda_1}:\frac{|f(x)-f_{B^{\lambda_1}}|}{\psi(B)}>2(m-1)G\right\}\\
&\subset\cdots\subset\bigcup_{\lambda_1,\lambda_2,\cdots,\lambda_m}\left\{x\in B^{\lambda_1,\lambda_2,\cdots,\lambda_m}:\frac{|f(x)-f_{B^{\lambda_1,\lambda_2,\cdots,\lambda_m}}|}{\psi(B)}>0\right\},
\end{align*}
therefore,
\begin{align*}
\mu\left(\left\{x\in B:\frac{|f(x)-f_{B}|}{\psi(B)}>2mG\right\}\right)&\leq\sum_{\lambda_1,\lambda_2,\cdots,\lambda_m}\mu(B^{\lambda_1,\lambda_2,\cdots,\lambda_m})\\
&\leq\frac{1}{2}\sum_{\lambda_1,\lambda_2,\cdots,\lambda_{m-1}}\mu(\tau B^{\lambda_1,\lambda_2,\cdots,\lambda_{m-1}})\\
&\leq\cdots\leq\frac{1}{2^m}\mu(\tau B).
\end{align*}
Take $G=C_0\|f\|_{\widetilde{\mathcal{L}}^{\psi}}$, and fix $n\in\mathbb{N}$ satisfying $t\in[2mG,2(m+1)G)$, then,
\begin{align*}
\mu\left(\left\{x\in B:\frac{|f(x)-f_{B}|}{\psi(B)}>t\right\}\right)&\leq\mu\left(\left\{x\in B:\frac{|f(x)-f_{B}|}{\psi(B)}>2mG\right\}\right)\\
&\leq\frac{1}{2^m}\mu(\tau B)\leq2\exp\left(-\frac{Ct}{\|f\|_{\widetilde{\mathcal{L}}^{\psi}}}\right)\mu(\tau B),
\end{align*}
which completes the proof.
\end{proof}

Finally, as an application of Theorem 3.3, the following equivalent characterization of generalized Campanato spaces can be obtained.

\begin{corollary}
Let $\tau>1$, $\mu\in\mathcal{D}_{\tau}$, $1<p<\infty$, if $f \in \widetilde{\mathcal{L}}^{\psi}$, then for any ball $B\subset\mathcal{X}$,
$$\frac{1}{\psi(B)}\left(\frac{1}{\mu(\tau B)} \int_B\left|f(x)-f_B\right|^{p} d \mu(x)\right)^{\frac{1}{p}}\approx\|f\|_{\widetilde{\mathcal{L}}^{\psi}}.$$
\end{corollary}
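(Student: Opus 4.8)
I read the claim as the norm equivalence $\|f\|_{\widetilde{\mathcal{L}}^{\psi}} \approx \sup_B \frac{1}{\psi(B)}\big(\frac{1}{\mu(\tau B)}\int_B|f-f_B|^{p}\,d\mu\big)^{1/p}$ (the per-ball left-hand side cannot literally majorize a supremum, so the lower bound must be read with the sup). This is the $L^{p}$ upgrade of the defining $L^{1}$ control. One inequality is the genuine payoff of the John--Nirenberg inequality (Theorem 3.3); the reverse splits into an elementary oscillation estimate recovering condition (6) and a more delicate recovery of the coefficient condition (7).

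The plan is to first prove $\frac{1}{\psi(B)}\big(\frac{1}{\mu(\tau B)}\int_B|f-f_B|^{p}\,d\mu\big)^{1/p} \lesssim \|f\|_{\widetilde{\mathcal{L}}^{\psi}}$ for each fixed $B$. I would express the $L^{p}$ integral through the distribution function, $\int_B|f-f_B|^{p}\,d\mu = p\int_0^{\infty} t^{p-1}\mu(\{x\in B:|f(x)-f_B|>t\})\,dt$, and substitute $t=\psi(B)s$ to match the scaling in Theorem 3.3. Inserting the John--Nirenberg bound $\mu(\{x\in B:|f(x)-f_B|/\psi(B)>s\}) \le 2\exp(-Cs/\|f\|_{\widetilde{\mathcal{L}}^{\psi}})\mu(\tau B)$ factors out $\mu(\tau B)$ and $\psi(B)^{p}$, leaving the convergent gamma integral $\int_0^{\infty} s^{p-1}\exp(-Cs/\|f\|_{\widetilde{\mathcal{L}}^{\psi}})\,ds = \Gamma(p)(\|f\|_{\widetilde{\mathcal{L}}^{\psi}}/C)^{p}$; its finiteness is exactly where $p<\infty$ and the exponential decay are used. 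Dividing by $\psi(B)^{p}\mu(\tau B)$, taking $p$-th roots, and passing to the supremum over $B$ settles this direction with a constant depending only on $p$ and $C$.

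For the reverse inequality I would verify the two defining conditions in turn. Condition (6) is immediate from Hölder: since $\int_B|f-f_B|\,d\mu \le (\int_B|f-f_B|^{p}\,d\mu)^{1/p}\mu(B)^{1/p'}$ and $\mu(B)\le\mu(\tau B)$, one gets $\frac{1}{\mu(\tau B)}\int_B|f-f_B|\,d\mu \le (\frac{1}{\mu(\tau B)}\int_B|f-f_B|^{p}\,d\mu)^{1/p}$, so dividing by $\psi(B)$ bounds the $L^{1}$ seminorm by the supremum of the left-hand side. The coefficient condition (7) is the part I expect to be the main obstacle, since the $L^{p}$ oscillation a priori carries no information about $|f_B-f_S|$ for distant balls. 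I would handle it by invoking Lemma 2.6 (which uses $\mu\in\mathcal{D}_{\tau}$) to reduce to pairs $B\subset S$ with $\widetilde{K}_{B,S}^{(\tau)}$ bounded, and for such pairs telescope along a chain of doubling dilations $\tau^{k}B$: each step $|f_{\tau^{k}B}-f_{\tau^{k+1}B}|$ is controlled by the oscillation of $f$ over the larger ball, the $\tau$-weak doubling keeps the chain finite with uniformly bounded measure ratios, and the regularity (5) lets me replace $\psi(\tau^{k}B)$ by $\psi(B)$ up to constants. Summing the bounded number of steps yields $\frac{1}{\psi(B)}|f_B-f_S|\lesssim \widetilde{K}_{B,S}^{(\tau)}\sup_B(\cdots)$, completing the estimate. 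The recurring subtlety is that the measure is non-doubling, so every passage between a ball and its dilate must be routed through doubling balls and the coefficient $\widetilde{K}^{(\tau)}$ rather than taken for granted as in the Euclidean case.
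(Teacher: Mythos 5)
Your two main estimates coincide exactly with the paper's proof: for the upper bound the paper also writes $\int_B|f-f_B|^p\,d\mu=p\int_0^\infty t^{p-1}\mu(\{x\in B:|f(x)-f_B|>t\})\,dt$, inserts Theorem 3.3, and evaluates the resulting gamma-type integral to obtain $\mu(\tau B)\bigl(\|f\|_{\widetilde{\mathcal{L}}^{\psi}}/\psi(B)\bigr)^p$; for the reverse direction the paper invokes only the H\"older inequality, exactly as in your recovery of condition (6). Up to that point your proposal is correct and essentially identical to the paper's argument, and your observation that the ``$\gtrsim$'' half can only be meaningful with a supremum over balls on the left is a fair reading of a loosely phrased statement.

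The divergence is the extra step you add to recover the coefficient condition (7), and that step contains a genuine flaw. First, note the paper never attempts this: its ``$\gtrsim$'' consists solely of the H\"older estimate, so the corollary is implicitly the standard self-improvement statement that (6) may be replaced by its $L^p$ analogue while (7) is retained as part of the definition. Second, the argument you sketch would not close. The hypothesis $\mu\in\mathcal{D}_\tau$ bounds the number of dilation steps $N^{(\tau)}_{B,\widetilde{B}^{\tau}}$ to the nearest doubling ball, not the measure ratios along the chain: writing $A$ for the supremum of the left-hand side of the corollary, the per-step telescoping estimate has the form $|f_{\tau^{k}B}-f_{\tau^{k+1}B}|\le \frac{\mu(\tau^{k+2}B)}{\mu(\tau^{k}B)}\,\psi(\tau^{k+1}B)\,A$, and the ratio $\mu(\tau^{k+2}B)/\mu(\tau^{k}B)$ is unbounded at non-doubling scales, so ``uniformly bounded measure ratios'' is simply not available from $\mathcal{D}_\tau$. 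Moreover, the reduction via Lemma 2.6 to pairs with $\widetilde{K}^{(\tau)}_{B,S}\le C$ does not bound the chain length $N^{(\tau)}_{B,S}$, since the summands $\mu(\tau^k B)/\lambda(c_B,\tau^k r_B)$ in the discrete coefficient can be arbitrarily small, so the telescoping need not be finite in the sense you require. In fact no repair should be expected: over genuinely non-doubling measures, the oscillation condition (6) alone defines a strictly larger space than the regularized one (with $\psi\equiv 1$ this is exactly the gap between the $\rho$-dilated BMO and $\mathrm{RBMO}(\mu)$, which is the very reason the regularity condition (7) appears in Hyt\"onen's definition), so (7) must be carried as a hypothesis, as the paper does, rather than derived from the $L^p$ oscillation.
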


\begin{proof}
The ``$\gtrsim$'' part is directly obtained by the H\"{o}lder inequality. Conversely, by Theorem 3.3,
\begin{align*}
\int_B|f(x)-f_B|^pd\mu(x)&=\int_0^{\infty}pt^{p-1}\mu(\{x\in B:|f(x)-f_B|>t\})dt\\
&\lesssim\mu(\tau B)\int_0^{\infty}t^{p-1}\exp\left(-\frac{Ct\psi(B)}{\|f\|_{\widetilde{\mathcal{L}}^{\psi}}}\right)dt\\
&\approx\mu(\tau B)\left(\frac{\|f\|_{\widetilde{\mathcal{L}}^{\psi}}}{\psi(B)}\right)^p,
\end{align*}
which shows the ``$\lesssim$'' part.
\end{proof}

\section{The Boundedness of $\widetilde{\mathcal{M}}_{l, \rho, s}$ and $\widetilde{\mathcal{M}}_{l, \rho, s, b}$} 

By using the conclusions in Section 3, now we obtain the boundedness of $\widetilde{\mathcal{M}}_{l, \rho, s}$ and $\widetilde{\mathcal{M}}_{l, \rho, s, b}$ on generalized Morrey space.

\begin{lemma}
Let $1<p<\infty$, $0<\delta<1$, $\phi \in \mathcal{G}_{\delta}^{\text {dec }}$, $\lambda\in\mathcal{R}_{\sigma}$ for some $\sigma \in(0, \delta / p)$, and $T_{\lambda}$ be bounded on $L^{2}$, then $T_{\lambda}$ is bounded on $L^{p, \phi}$.
\end{lemma}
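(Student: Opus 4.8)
The plan is to reduce the Morrey bound to a uniform estimate on a single ball and then split $f$ into a local piece and a tail. Fixing $B=B(x_0,r)$, it suffices to prove $\int_B|T_\lambda f|^p\,d\mu\lesssim\phi(B)\mu(\eta B)\|f\|_{L^{p,\phi}}^p$, and I would write $f=f\chi_{2B}+f\chi_{\mathcal{X}\setminus2B}=:f_1+f_2$. The buffer $2B$ is chosen precisely so that for $x\in B$ and $y\notin2B$ one has $d(x,y)\approx d(x_0,y)$, which is what makes the tail amenable to pure kernel estimates, while the genuinely singular near-diagonal contribution is absorbed into $f_1$.

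For the tail I would combine the kernel size with the near-constancy of $\lambda$ from Remark 2.1 (inequality (3)) to obtain, uniformly in $x\in B$,
$$|T_\lambda f_2(x)|\lesssim\sum_{k\ge1}\frac{1}{\lambda(x_0,2^kr)}\int_{R_k}|f|\,d\mu,\qquad R_k:=2^{k+1}B.$$
Hölder's inequality and the Morrey norm give $\int_{R_k}|f|\,d\mu\le\|f\|_{L^{p,\phi}}(\phi(R_k)\mu(\eta R_k))^{1/p}\mu(R_k)^{1/p'}$. The almost-decreasing condition in $\mathcal{G}^{dec}_\delta$ lets me trade $\phi(R_k)$ for $\phi(B)$ at the cost of a power of $\mu(\eta R_k)$; collecting the exponents (using $(1-\delta)/p+1/p'=1-\delta/p$) and then $\mu(\eta R_k)\le\lambda(\eta R_k)\lesssim\lambda(x_0,2^kr)$ reduces the $k$-th summand to $\lambda(x_0,2^kr)^{-\delta/p}$ times the target. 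The reverse-doubling inequality $\lambda(x_0,2^kr)\ge C_{(2^k)}\lambda(B)$ then converts the series into $\sum_k C_{(2^k)}^{-\delta/p}$, which converges because $\sigma<\delta/p$ forces $C_{(2^k)}^{-\delta/p}\le C_{(2^k)}^{-\sigma}$ and $\lambda\in\mathcal{R}_\sigma$ gives $\sum_k C_{(2^k)}^{-\sigma}<\infty$. A short comparison using $\mu(\eta B)\le\lambda(\eta B)\lesssim\lambda(B)$ yields $|T_\lambda f_2(x)|\lesssim(\phi(B)\mu(\eta B)/\mu(B))^{1/p}\|f\|_{L^{p,\phi}}$, and integrating over $B$ closes the tail.

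For the local piece I would use the $L^2$-boundedness of $T_\lambda$. When $1<p\le2$, Hölder in the exponent $2/p$ gives $\int_B|T_\lambda f_1|^p\le\|T_\lambda f_1\|_{L^2}^p\mu(B)^{1-p/2}\lesssim\|f\|_{L^2(2B)}^p\mu(B)^{1-p/2}$, and a second Hölder turns the right side into $\int_{2B}|f|^p$ multiplied by $(\mu(B)/\mu(2B))^{1-p/2}\le1$. To land on $\mu(\eta B)$ rather than on the uncontrolled $\mu(2\eta B)$, I would exploit the $\eta$-independence of the Morrey norm (Remark 2.4): taking enlargement $\eta/2$ for the ball $2B$ gives $\int_{2B}|f|^p\le\phi(2B)\mu(\eta B)\|f\|_{L^{p,\phi,\eta/2}}^p\lesssim\phi(2B)\mu(\eta B)\|f\|_{L^{p,\phi}}^p$, and the almost-decreasing condition yields $\phi(2B)\lesssim\phi(B)$, completing the local estimate in this range.

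The hard part is the local piece for $p>2$: here the $L^2$-bound no longer controls $\int_B|T_\lambda f_1|^p$, and the kernel size alone diverges on the diagonal, since $T_\lambda$ is a critical, order-zero potential that is \emph{not} of Calder\'on--Zygmund type (already $T_\lambda\mathbf{1}=+\infty$ on an unbounded space). I expect to resolve this by first upgrading the hypothesis to $L^p$-boundedness of $T_\lambda$ for all $1<p<\infty$, extracted from the $L^2$-bound and the size estimate alone --- either by a sparse (good-$\lambda$) domination, which needs only weak-type $(2,2)$ boundedness and the size bound, or by duality, noting that the kernel is symmetric up to constants by (3). Once $L^p$-boundedness is available, the local estimate closes exactly as in the paragraph above, and combining the local and tail bounds and taking the supremum over $B$ gives $\|T_\lambda f\|_{L^{p,\phi}}\lesssim\|f\|_{L^{p,\phi}}$; I regard this $L^2\to L^p$ upgrade for the singular near-diagonal part as the main obstacle, with the sharp summation condition $\sigma<\delta/p$ being the decisive technical point in the tail.
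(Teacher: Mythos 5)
Your decomposition $f=f\chi_{2B}+f\chi_{(2B)^c}$ and your entire tail estimate coincide with the paper's proof of Lemma 4.1: the paper bounds $|T_{\lambda}(f_2)(x)|$ by the same annular sum over $2^{j+1}B$, applies H\"older's inequality and the Morrey norm, trades $\phi(2^{j+1}B)(\mu(2^{j+1}B))^{\delta}$ for $\phi(B)(\mu(B))^{\delta}$ via the $\mathcal{G}^{dec}_{\delta}$ condition, and sums the series using $\mu\leq\lambda$ together with $\lambda\in\mathcal{R}_{\sigma}$, $\sigma<\delta/p$, exactly as you describe. Your local estimate for $1<p\leq2$ (H\"older down to $L^2$, H\"older back up on $2B$, the $\eta$-independence of Remark 2.4, and the almost-decrease of $\phi$) is correct, but note the paper does not argue this way: for the local term $I_1$ it simply invokes, for all $1<p<\infty$ at once, the argument of \cite[Theorem 1]{Tao15}.

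The genuine gap is the one you flag yourself: the local piece for $p>2$, and neither of your proposed repairs can close it, because both rely only on hypotheses (kernel size, symmetry up to constants, weak or strong $(2,2)$) that provably do not self-improve. Sparse or good-$\lambda$ domination is not available from a size bound plus weak $(2,2)$: the known sparse-domination theorems require kernel regularity (a Dini-type condition), which $K(x,y)=1/\lambda(x,d(x,y))$ does not possess; moreover, since $T_{\lambda}$ is a positive operator, a pointwise sparse bound would already imply $L^p$-boundedness for every $p\in(1,\infty)$ with no $L^2$ hypothesis at all, which cannot follow from the size bound alone --- as you yourself observe via $T_{\lambda}\mathbf{1}=+\infty$. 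Duality fares no better: symmetry of the kernel up to constants shows only that $L^{p'}$-boundedness of $T_{\lambda}$ implies $L^{p}$-boundedness, so starting from the single exponent $p=2$ it yields nothing new. Concretely, a rank-one positive symmetric kernel $a(x)a(y)$ with $a\in L^2\setminus L^p$ gives an operator that is positive, symmetric and $L^2$-bounded but not $L^p$-bounded, so any correct argument must exploit the specific structure $1/\lambda(x,d(x,y))$ rather than these general principles; that is precisely the step the paper imports from \cite[Theorem 1]{Tao15} instead of re-deriving. In sum, your proposal is faithful to the paper on the tail and complete for $1<p\leq2$, but for $p>2$ it identifies the obstacle without overcoming it, and the suggested mechanisms would fail as stated.
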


\begin{proof}
For any fixed ball $B$, decompose
$$f=f_{1}+f_{2}:=f \chi_{2 B}+f \chi_{(2 B)^c} .$$
Then write
\begin{align*}
&\left(\frac{1}{\phi(B) \mu(\eta B)} \int_{B}\left|T_{\lambda}(f)(x)\right|^{p} d \mu(x)\right)^{\frac{1}{p}}\\
&\ \ \ \ \leq\left(\frac{1}{\phi(B) \mu(\eta B)} \int_{B}\left|T_{\lambda}(f_{1})(x)\right|^{p} d \mu(x)\right)^{\frac{1}{p}}+\left(\frac{1}{\phi(B) \mu(\eta B)} \int_{B}\left|T_{\lambda}(f_{2})(x)\right|^{p} d \mu(x)\right)^{\frac{1}{p}} \\
&\ \ \ \ =:I_{1}+I_{2} .
\end{align*}
By the argument similar to \cite[Theorem 1]{Tao15}, $I_{1} \lesssim\|f\|_{L^{p, \phi}}$. For $I_{2}$, by the H\"{o}lder inequality,
\begin{align*}
\left|T_{\lambda}(f_{2})(x)\right| & \leq \int_{(2B)^c} \frac{|f(y)|}{\lambda(x, d(x, y))} d \mu(y) \\
&\lesssim \sum_{j=1}^{\infty} \frac{1}{\lambda\left(c_{B}, 2^{j+1} r_{B}\right)} \int_{2^{j+1} B}|f(y)| d \mu(y) \\
&\leq \sum_{j=1}^{\infty} \frac{1}{\lambda\left(c_{B}, 2^{j+1} r_{B}\right)}\left(\int_{2^{j+1} B}|f(y)|^{p} d \mu(y)\right)^{\frac{1}{p}}(\mu(2^{j+1} B))^{1-\frac{1}{p}} \\
&\leq\|f\|_{L^{p, \phi}} \sum_{j=1}^{\infty} \frac{(\phi(2^{j+1} B))^{\frac{1}{p}} \mu(2^{j+1} B)}{\lambda(c_{B}, 2^{j+1} r_{B})} \\
&\lesssim\|f\|_{L^{p, \phi}}(\phi(B))^{\frac{1}{p}}(\mu(B))^{\frac{\delta}{p}} \sum_{j=1}^{\infty} \frac{1}{(\lambda(c_{B}, 2^{j+1} r_{B}))^{\frac{\delta}{p}}} \\
&\lesssim\|f\|_{L^{p, \phi}}(\phi(B))^{\frac{1}{p}},
\end{align*}
which follows that $I_{2} \lesssim\|f\|_{L^{p, \phi}}$. Then we obtain that $\left\|T_{\lambda}(f)\right\|_{L^{p, \phi}} \lesssim\|f\|_{L^{p, \phi}}$.
\end{proof}

\begin{theorem}
Let $1<p<\infty$, $0<\delta<1$, $\phi \in \mathcal{G}_{\delta}^{\text {dec }}$, $\lambda\in\mathcal{R}_{\sigma}$ for some $\sigma \in(0, \delta / p)$, and $T_{\lambda}$ be bounded on $L^{2}$, then $\widetilde{\mathcal{M}}_{l, \rho, s}$ is bounded on $L^{p, \phi}$.
\end{theorem}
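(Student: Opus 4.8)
The plan is to reduce the statement to Lemma 4.1 by establishing the pointwise domination
$$\widetilde{\mathcal{M}}_{l, \rho, s}(f)(x) \lesssim T_{\lambda}(|f|)(x)$$
for every $f \in L_c^{\infty}(\mu)$ and $x \notin \operatorname{supp}(f)$. Granting this, since the $L^{p,\phi}$ norm depends on $f$ only through $|f|$ (so that $\||f|\|_{L^{p,\phi}} = \|f\|_{L^{p,\phi}}$), the boundedness of $T_{\lambda}$ on $L^{p,\phi}$ furnished by Lemma 4.1 under exactly the present hypotheses on $p,\delta,\phi,\lambda,\sigma$ immediately yields $\|\widetilde{\mathcal{M}}_{l, \rho, s}(f)\|_{L^{p,\phi}} \lesssim \|T_{\lambda}(|f|)\|_{L^{p,\phi}} \lesssim \|f\|_{L^{p,\phi}}$, which is the asserted estimate.

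To obtain the pointwise bound, I would first insert the size condition of the kernel $K_{l,\theta}$. Over the region $d(x,y) \le t$ the integrand of the inner integral is controlled by
$$\frac{1}{t^{l+\rho}} \frac{|K_{l,\theta}(x,y)|}{(d(x,y))^{1-\rho}} |f(y)| \lesssim \frac{(d(x,y))^{l+\rho}}{t^{l+\rho}} \frac{|f(y)|}{\lambda(x, d(x,y))},$$
where the exponents combine as $(1+l)-(1-\rho)=l+\rho$. Next I would apply Minkowski's integral inequality to pull the $L^{s}(dt/t)$ norm inside the $y$-integration. For a fixed $y \in \operatorname{supp}(f)$ one has $d(x,y)>0$, and the constraint $d(x,y)\le t$ turns the $t$-integral into an integral over $[d(x,y),\infty)$; computing
$$\left(\int_{d(x,y)}^{\infty} \frac{1}{t^{s(l+\rho)}} \frac{dt}{t}\right)^{\frac{1}{s}} = (s(l+\rho))^{-\frac{1}{s}} (d(x,y))^{-(l+\rho)},$$
which converges precisely because $l+\rho>0$ under the hypotheses $l \ge 0$, $\rho>0$, $s \ge 1$. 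Multiplying by the factor $(d(x,y))^{l+\rho}$ cancels the power of $d(x,y)$ and leaves exactly the kernel $1/\lambda(x,d(x,y))$ of $T_{\lambda}$, which gives the desired domination.

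The argument is mostly power bookkeeping, so the only genuine point to watch is the application of Minkowski's integral inequality together with the convergence of the $t$-integral: one must keep the characteristic function $\chi_{\{d(x,y)\le t\}}$ attached to the kernel so that the inner region $d(x,y)\le t$ is correctly encoded as the lower limit of the $t$-integration, and one must verify that the resulting exponent $-s(l+\rho)-1$ is strictly below $-1$ so that the integral is finite. Once the pointwise estimate is in hand, the conclusion follows verbatim from Lemma 4.1, with no further appeal to the fine properties of $\widetilde{\mathcal{M}}_{l, \rho, s}$ beyond its kernel size bound.
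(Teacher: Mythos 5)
Your proposal is correct and follows essentially the same route as the paper: the paper likewise applies the Minkowski integral inequality to pull the $L^{s}(dt/t)$ norm inside, evaluates $\bigl(\int_{d(x,y)}^{\infty} t^{-1-(l+\rho)s}\,dt\bigr)^{1/s} \approx (d(x,y))^{-(l+\rho)}$, and combines this with the kernel size bound to get the pointwise domination $\widetilde{\mathcal{M}}_{l,\rho,s}(f)(x) \lesssim T_{\lambda}(|f|)(x)$, concluding via Lemma 4.1. Your added bookkeeping (keeping $\chi_{\{d(x,y)\le t\}}$ attached and checking $l+\rho>0$ for convergence) is exactly the right point to verify and matches the paper's implicit computation.
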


\begin{proof}
By the Minkowski inequality,
$$
\widetilde{\mathcal{M}}_{l, \rho, s}(f)(x) \leq \int_{\mathcal{X}} \frac{\left|K_{l, \theta}(x, y)\right|}{(d(x, y))^{1-\rho}}|f(y)|\left(\int_{d(x, y)}^{\infty} \frac{d t}{t^{1+(l+\rho) s}}\right)^{\frac{1}{s}} d \mu(y) \lesssim T_{\lambda}(|f|)(x),
$$
then by Lemma 4.1,
$$\left\|\widetilde{\mathcal{M}}_{l, \rho, s}(f)\right\|_{L^{p, \phi}} \lesssim\left\|T_{\lambda}(|f|)\right\|_{L^{p, \phi}} \lesssim\|f\|_{L^{p, \phi}},$$
which completes the proof.
\end{proof}

\begin{theorem}
Let $1<p<\infty$, $0<\delta<1$, $\phi \in \mathcal{G}_{\delta}^{\text {dec }}$, $\psi$ satisfy (5), $b \in\widetilde{\mathcal{L}}^{\psi}$, $\mu\in\mathcal{D}_{\tau}$, $\lambda\in\mathcal{R}_{\sigma}$ for some $\sigma \in(0, \delta / p)$, and $T_{\lambda}$ be bounded on $L^{2}$, then,
$$\widetilde{M}^{\sharp}(\widetilde{\mathcal{M}}_{l,\rho,s,b}(f))(x)\lesssim\|b\|_{\widetilde{\mathcal{L}}^{\psi}}\left(M_{\psi,p,5}(f)(x)+M_{\psi,p,6}(\widetilde{\mathcal{M}}_{l,\rho,s}(f))(x)\right).$$
\end{theorem}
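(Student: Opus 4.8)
The plan is to bound each of the two suprema in the definition of $\widetilde{M}^{\sharp}$ separately, after linearizing the commutator pointwise. The starting point is to write $b(x)-b(y)=(b(x)-b_{B})+(b_{B}-b(y))$ inside the defining integral of $\widetilde{\mathcal{M}}_{l,\rho,s,b}$ and apply the triangle inequality for the $L^{s}(dt/t)$-norm: for every ball $B\ni x$ and every $z\in B$,
$$\widetilde{\mathcal{M}}_{l,\rho,s,b}(f)(z)\leq|b(z)-b_{B}|\,\widetilde{\mathcal{M}}_{l,\rho,s}(f)(z)+\widetilde{\mathcal{M}}_{l,\rho,s}\big((b_{B}-b)f\big)(z).$$
Splitting $f=f\chi_{\frac{6}{5}B}+f\chi_{(\frac{6}{5}B)^{c}}=:f_{1}+f_{2}$ in the second term produces three pieces, $(A)(z)=|b(z)-b_{B}|\widetilde{\mathcal{M}}_{l,\rho,s}(f)(z)$, $(B)(z)=\widetilde{\mathcal{M}}_{l,\rho,s}((b_{B}-b)f_{1})(z)$ and $(C)(z)=\widetilde{\mathcal{M}}_{l,\rho,s}((b_{B}-b)f_{2})(z)$. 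For the oscillation supremum I would use that $m_{B}(g)$ may be replaced, up to a factor $2$, by any constant, and take that constant to be $(C)(c_{B})$; the sublinearity of $\widetilde{\mathcal{M}}_{l,\rho,s}$ in $L^{s}(dt/t)$ then reduces the oscillation of the commutator to $\frac{1}{\mu(6B)}\int_{B}(A)\,d\mu+\frac{1}{\mu(6B)}\int_{B}(B)\,d\mu+\frac{1}{\mu(6B)}\int_{B}|(C)(z)-(C)(c_{B})|\,d\mu$.

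Piece $(A)$ is handled by Hölder with exponents $p,p'$: the factor $\big(\frac{1}{\mu(6B)}\int_{B}|b-b_{B}|^{p'}\big)^{1/p'}$ is at most $\psi(B)\|b\|_{\widetilde{\mathcal{L}}^{\psi}}$ by the $L^{p'}$ John--Nirenberg estimate of Corollary 3.1, while $\psi(B)^{-1}\big(\frac{1}{\mu(6B)}\int_{B}|\widetilde{\mathcal{M}}_{l,\rho,s}f|^{p}\big)^{1/p}\leq\psi(B)^{-1}M_{\psi,p,6}(\widetilde{\mathcal{M}}_{l,\rho,s}f)(x)$; the $\psi$ factors cancel, leaving $\|b\|_{\widetilde{\mathcal{L}}^{\psi}}M_{\psi,p,6}(\widetilde{\mathcal{M}}_{l,\rho,s}f)(x)$. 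For piece $(B)$ I would avoid the exponent increase caused by a crude $L^{p}$-bound: fix some $q\in(1,p)$ and use that $\widetilde{\mathcal{M}}_{l,\rho,s}f\lesssim T_{\lambda}(|f|)$ with $T_{\lambda}$, as a Calder\'on--Zygmund operator bounded on $L^{2}$, bounded on $L^{q}$. This gives $\frac{1}{\mu(6B)}\int_{B}(B)\,d\mu\lesssim\big(\frac{1}{\mu(6B)}\int_{\frac{6}{5}B}|b_{B}-b|^{q}|f|^{q}\big)^{1/q}$, and a further Hölder with exponents $p/q$ and $(p/q)'$ separates $b$ from $f$ while keeping $f$ at power $p$; applying Corollary 3.1 to the $b$-factor and using $6B=5\cdot\frac{6}{5}B$ together with $\psi(\frac{6}{5}B)\approx\psi(B)$ from (5) yields $\|b\|_{\widetilde{\mathcal{L}}^{\psi}}M_{\psi,p,5}(f)(x)$.

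Piece $(C)$ is the analytic heart. Here I would use the triangle inequality to pass the difference $(C)(z)-(C)(c_{B})$ inside the $L^{s}(dt/t)$-norm and then exploit the $\theta$-type regularity of $K_{l,\theta}$ together with $d(z,y)\approx d(c_{B},y)$ for $z\in B$, $y\in(\frac{6}{5}B)^{c}$. Summing the resulting kernel differences over the dyadic annuli $2^{j+1}B\setminus 2^{j}B$ produces a series whose $j$-th term carries a factor $\theta(2^{-j})$; on each annulus $\int_{2^{j+1}B}|b_{B}-b|\,|f|\,d\mu$ is split using $|b_{B}-b_{2^{j+1}B}|\lesssim j\,\psi(B)\|b\|_{\widetilde{\mathcal{L}}^{\psi}}$ from Lemma 2.5 and the $L^{p'}$ John--Nirenberg bound of Corollary 3.1 for $|b-b_{2^{j+1}B}|$. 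The linear growth in $j$ is summable precisely because the Dini hypothesis $\int_{0}^{1}\frac{\theta(t)}{t}\log\frac{1}{t}\,dt<\infty$ is equivalent to $\sum_{j}j\,\theta(2^{-j})<\infty$. This is the step I expect to be the main obstacle: one must verify that the upper-doubling and weak reverse-doubling hypotheses ($\lambda\in\mathcal{R}_{\sigma}$) render the geometric factors $\mu(2^{j+1}B)/\lambda(c_{B},2^{j}r_{B})$ harmless, so that $(C)$ is controlled by $\|b\|_{\widetilde{\mathcal{L}}^{\psi}}M_{\psi,p,5}(f)(x)$.

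Finally, for the second supremum, over pairs of doubling balls $(B,S)\in\Delta_{x}$, I would run the same three-term decomposition but now comparing $m_{B}$ and $m_{S}$ of the commutator, using the value of $\widetilde{\mathcal{M}}_{l,\rho,s}((b_{B}-b)f_{2})$ at $c_{S}$ as the common reference constant. The new feature is the chain of scales from $B$ to $S$: the telescoping of $|b_{B}-b_{S}|$ and of the averages of $\widetilde{\mathcal{M}}_{l,\rho,s}f$ along this chain produces exactly the discrete coefficient $\widetilde{K}_{B,S}^{(6)}$, which cancels the denominator, via the defining inequality (7) of $\widetilde{\mathcal{L}}^{\psi}$, Lemma 2.5, and the subadditivity and monotonicity properties of $\widetilde{K}$ in Lemma 2.1. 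Collecting the bounds for both suprema gives the claimed pointwise inequality.
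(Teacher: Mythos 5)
Your architecture is the paper's: the same linearization into three pieces (your $(A),(B),(C)$ are the paper's $D_1,D_2,D_3$, up to replacing the mean $m_B$ by the value at a point, which costs only a factor $2$), the same use of Corollary 3.1 and Lemma 2.5 with the linear-in-$j$ growth absorbed by the Dini condition, and the same annulus-summation mechanism for the discrete coefficient. But two steps as written would not go through. First, in the doubling-pair estimate your reference constant is mis-specified: you keep the cutoff $f_2=f\chi_{(\frac{6}{5}B)^c}$ and propose $\widetilde{\mathcal{M}}_{l,\rho,s}((b_B-b)f_2)(c_S)$ as the common constant, but $\operatorname{supp}f_2$ meets $S$, so this function oscillates on $S$ by amounts the kernel regularity cannot control. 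The paper re-cuts $f$ at the scale of $S$: with $N_0=N_{B,S}^{(6)}+1$ it compares the means of $\widetilde{\mathcal{M}}_{l,\rho,s}((b-b_B)f\chi_{(6^{N_0}B)^c})$ over $B$ and $S$ (terms $F_1,F_2$, the latter using $|b_B-b_S|\lesssim\psi(B)\widetilde{K}_{B,S}^{(6)}\|b\|_{\widetilde{\mathcal{L}}^{\psi}}$ from (7)), and the discarded annulus $6^{N_0}B\setminus\frac{6}{5}B$ produces the terms $F_3,F_4$, whose sums $\sum_{j=1}^{N_0-1}\mu(6^{j+1}B)/\lambda(c_B,6^j r_B)\lesssim\widetilde{K}_{B,S}^{(6)}$ are precisely where the discrete coefficient comes from. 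Your ``telescoping along the chain of scales'' remark gestures at this, but without the far-field re-cut at scale $6^{N_0}B$ the comparison does not close.

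Second, your treatment of $(B)$ rests on $T_\lambda$ being ``a Calder\'on--Zygmund operator bounded on $L^q$'': the kernel $1/\lambda(x,d(x,y))$ satisfies only size and comparability bounds coming from upper doubling and (3); it has no H\"older or Dini smoothness, so $L^2\Rightarrow L^q$ for $T_\lambda$ is unjustified. What is actually needed, and what the paper uses (with the specific choice $q=\sqrt{p}$, so that H\"older with exponents $\sqrt{p},(\sqrt{p})'$ produces $|f|^p$ and $|b-b_B|^{\sqrt{p}(\sqrt{p})'}$), is the $L^q$-boundedness of $\widetilde{\mathcal{M}}_{l,\rho,s}$ itself; that holds because $\widetilde{\mathcal{M}}_{l,\rho,s}$ carries a $\theta$-Dini vector-valued kernel and is $L^2$-bounded via the domination $\widetilde{\mathcal{M}}_{l,\rho,s}f\lesssim T_\lambda(|f|)$, as in \cite{Lu22}, so the fix is a citation, not a new argument. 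Two smaller inaccuracies: the difference $(C)(z)-(C)(c_B)$ generates not only the kernel-smoothness series with factors $\theta(2^{-j})$ but also a truncation-mismatch term from the region $d(y,w)\le t<d(z,w)$ and a term from the difference of the factors $(d(\cdot,w))^{\rho-1}$ (the paper's $E_2,E_3$); these carry no $\theta$ but decay geometrically like $2^{-j/s}$ and $2^{-j}$, hence still sum. And your flagged obstacle about the ratios $\mu(2^{j+1}B)/\lambda(c_B,2^j r_B)$ is vacuous: upper doubling alone gives $\mu(2^{j+1}B)\le\lambda(c_B,2^{j+1}r_B)\le C_{(\lambda)}\lambda(c_B,2^j r_B)$ uniformly in $j$; the hypothesis $\lambda\in\mathcal{R}_\sigma$ plays no role in this pointwise estimate and is needed only for Lemma 4.1 and Theorem 4.1.
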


\begin{proof}
By Definition 2.11, it suffices to show that, for all $x\in\mathcal{X}$ and balls $B\ni x$, 
\begin{equation}
\frac{1}{\mu(6 B)} \int_{B}\left|\widetilde{\mathcal{M}}_{l, \rho, s, b}(f)(y)-m_{B}\right| d \mu(y) \lesssim\|b\|_{\widetilde{\mathcal{L}}^{\psi}}\left(M_{\psi, p, 5}(f)(x)+M_{\psi, p, 6}(\widetilde{\mathcal{M}}_{l, \rho, s}(f))(x)\right),
\end{equation}
and for all doubling balls $B \subset S$ with $B \ni x$,
\begin{equation}
|m_B-m_S|\lesssim\widetilde{K}_{B,S}^{(6)}\|b\|_{\widetilde{\mathcal{L}}^{\psi}}\left(M_{\psi, p, 5}(f)(x)+M_{\psi, p, 6}(\widetilde{\mathcal{M}}_{l, \rho, s}(f))(x)\right),
\end{equation}
where
$$m_{B}:=m_{B}\left(\widetilde{\mathcal{M}}_{l, \rho, s}((b-b_{B}) f \chi_{\left(\frac{6}{5} B\right)^c})\right),\ \ m_{S}:=m_{S}\left(\widetilde{\mathcal{M}}_{l, \rho, s}((b-b_{S}) f \chi_{\left(\frac{6}{5} S\right)^c})\right).$$
To estimate (8), decompose
$$f=f_{1}+f_{2}:=f \chi_{\frac{6}{5} B}+f \chi_{\left(\frac{6}{5} B\right)^c}.$$
Then, write
\begin{align*}
&\frac{1}{\mu(6 B)} \int_{B}\left|\widetilde{\mathcal{M}}_{l, \rho, s, b}(f)(y)-m_{B}\right| d \mu(y) \\
&\ \ \ \ \leq \frac{1}{\mu(6 B)} \int_{B}\left|(b(y)-b_{B}) \widetilde{\mathcal{M}}_{l, \rho, s}(f)(y)\right| d \mu(y)+\frac{1}{\mu(6 B)} \int_{B}\left|\widetilde{\mathcal{M}}_{l, \rho, s}((b(\cdot)-b_{B}) f_{1})(y)\right| d \mu(y) \\
&\ \ \ \ \ \ \ \ +\frac{1}{\mu(6 B)} \int_{B}\left|\widetilde{\mathcal{M}}_{l, \rho, s}((b(\cdot)-b_{B}) f_{2})(y)-m_{B}\right| d \mu(y) \\
&\ \ \ \ =: D_{1}+D_{2}+D_{3}.
\end{align*}
By the H\"{o}lder inequality and Corollary 3.1,
\begin{align*}
D_{1} & \leq \frac{1}{\psi(B)}\left(\frac{1}{\mu(6 B)} \int_{B}\left|b(y)-b_{B}\right|^{p^{\prime}} d \mu(y)\right)^{\frac{1}{p^{\prime}}} \psi(B)\left(\frac{1}{\mu(6 B)} \int_{B}\left|\widetilde{\mathcal{M}}_{l, \rho, s}(f)(y)\right|^{p} d \mu(y)\right)^{\frac{1}{p}} \\
& \lesssim\|b\|_{\widetilde{\mathcal{L}}^{\psi}}\left(M_{\psi, p, 6}\left(\widetilde{\mathcal{M}}_{l, \rho, s}(f)\right)(x)\right) .
\end{align*}
By the H\"{o}lder inequality, Lemma 2.3, Corollary 3.1 and Lemma 2.5,
\begin{align*}
D_{2} & \leq \frac{1}{\mu(6 B)}\left(\int_{B}\left|\widetilde{\mathcal{M}}_{l, \rho, s}((b(\cdot)-b_{B}) f_{1})(y)\right|^{\sqrt{p}} d \mu(y)\right)^{\frac{1}{\sqrt{p}}}(\mu(B))^{1-\frac{1}{\sqrt{p}}} \\
& \lesssim\left(\frac{1}{\mu(6 B)} \int_{\frac{6}{5} B}\left|(b(y)-b_{B}) f(y)\right|^{\sqrt{p}} d \mu(y)\right)^{\frac{1}{\sqrt{p}}} \\
& \leq\left(\frac{1}{\mu(6 B)} \int_{\frac{6}{5} B}|f(y)|^{\sqrt{p} \sqrt{p}} d \mu(y)\right)^{\frac{1}{\sqrt{p}} \frac{1}{\sqrt{p}}}\left(\frac{1}{\mu(6 B)} \int_{\frac{6}{5} B}\left|b(y)-b_{B}\right|^{\sqrt{p}(\sqrt{p})^{\prime}} d \mu(y)\right)^{\frac{1}{\sqrt{p}} \frac{1}{(\sqrt{p})^{\prime}}}\\
&\leq \psi\left(\frac{6}{5} B\right)\left(\frac{1}{\mu(6 B)} \int_{\frac{6}{5} B}|f(y)|^{p} d \mu(y)\right)^{\frac{1}{p}} \\
&\ \ \ \times\frac{1}{\psi\left(\frac{6}{5} B\right)}\left[\left(\frac{1}{\mu(6 B)} \int_{\frac{6}{5} B}\left|b(y)-b_{\frac{6}{5} B}\right|^{\sqrt{p}(\sqrt{p})^{\prime}} d \mu(y)\right)^{\frac{1}{\sqrt{p}} \frac{1}{(\sqrt{p})^{\prime}}}+\left|b_{\frac{6}{5} B}-b_{B}\right|\right] \\
&\lesssim\|b\|_{\widetilde{\mathcal{L}}^{\psi}} M_{\psi, p, 5}(f)(x).
\end{align*}
Since
\begin{align*}
D_{3} & =\frac{1}{\mu(6 B)} \int_{B}\left|\widetilde{\mathcal{M}}_{l, \rho, s}((b(\cdot)-b_{B}) f_{2})(y)-\frac{1}{\mu(B)} \int_{B} \widetilde{\mathcal{M}}_{l, \rho, s}((b(\cdot)-b_{B}) f_{2})(z) d \mu(z)\right| d \mu(y) \\
& \leq \frac{1}{\mu(6 B)} \frac{1}{\mu(B)} \int_{B} \int_{B}\left|\widetilde{\mathcal{M}}_{l, \rho, s}((b(\cdot)-b_{B}) f_{2})(y)-\widetilde{\mathcal{M}}_{l, \rho, s}((b(\cdot)-b_{B}) f_{2})(z)\right| d \mu(y) d \mu(z),
\end{align*}
in order to estimate $D_{3}$, we estimate
$$E:=\left|\widetilde{\mathcal{M}}_{l, \rho, s}((b(\cdot)-b_{B}) f_{2})(y)-\widetilde{\mathcal{M}}_{l, \rho, s}((b(\cdot)-b_{B}) f_{2})(z)\right|.$$
By the Minkowski inequality, write
\begin{align*}
E&=\left|\left(\int_{0}^{+\infty}\left|\frac{1}{t^{l+\rho}} \int_{d(y, w) \leq t} (b(w)-b_{B})\frac{K_{l, \theta}(y, w)}{(d(y, w))^{1-\rho}} f_{2}(w) d \mu(w)\right|^{s} \frac{d t}{t}\right)^{\frac{1}{s}}\right.\\
&\ \ \ -\left.\left(\int_{0}^{+\infty}\left|\frac{1}{t^{l+\rho}} \int_{d(z, w) \leq t}(b(w)-b_{B}) \frac{K_{l, \theta}(z, w)}{(d(z, w))^{1-\rho}} f_{2}(w) d \mu(w)\right|^{s} \frac{d t}{t}\right)^{\frac{1}{s}} \right| \\
&\leq\left(\int_{0}^{+\infty}\left| \int_{d(y, w) \leq t} (b(w)-b_{B})\frac{K_{l, \theta}(y, w)}{(d(y, w))^{1-\rho}} f_{2}(w) d \mu(w)\right.\right. \\
&\ \ \ -\left.\left.\int_{d(z, w) \leq t}(b(w)-b_{B}) \frac{K_{l, \theta}(z, w)}{(d(z, w))^{1-\rho}} f_{2}(w) d \mu(w)\right|^{s} \frac{d t}{t^{1+(l+\rho) s}}\right)^{\frac{1}{s}} \\
&\leq\left(\int_{0}^{+\infty}\left|\int_{d(y, w) \leq t} (b(w)-b_{B})\frac{K_{l, \theta}(y, w)-K_{l, \theta}(z, w)}{(d(y, w))^{1-\rho}} f_{2}(w) d \mu(w)\right|^{s} \frac{d t}{t^{1+(l+\rho) s}}\right)^{\frac{1}{s}} \\
&\ \ \ +\left(\int_{0}^{+\infty}\left|\int_{d(y, w) \leq t<d(z, w)} (b(w)-b_{B})\frac{K_{l, \theta}(z, w)}{(d(y, w))^{1-\rho}} f_{2}(w) d \mu(w)\right|^{s} \frac{d t}{t^{1+(l+\rho) s}}\right)^{\frac{1}{s}} \\
&\ \ \ +\left(\int_{0}^{+\infty}\left|\int_{d(z, w) \leq t}(b(w)-b_{B})\left(\frac{K_{l, \theta}(z, w)}{(d(y, w))^{1-\rho}}-\frac{K_{l, \theta}(z, w)}{(d(z, w))^{1-\rho}}\right) f_{2}(w) d \mu(w)\right|^{s} \frac{d t}{t^{1+(l+\rho) s}}\right)^{\frac{1}{s}} \\
&=:E_{1}+E_{2}+E_{3}.
\end{align*}
For any $y, z \in B$, by the Minkowski inequality, the H\"{o}lder inequality, Corollary 3.1 and Lemma 2.5,
\begin{align*}
E_{1}&\leq \int_{\left(\frac{6}{5} B\right)^c} |b(w)-b_{B}|\frac{\left|K_{l, \theta}(y, w)-K_{l, \theta}(z, w)\right|}{(d(y, w))^{1-\rho}}|f(w)|\left(\int_{d(y, w)}^{+\infty} \frac{d t}{t^{1+(l+\rho) s}}\right)^{\frac{1}{s}} d \mu(w) \\
&\lesssim \sum_{j=1}^{\infty} \int_{\left(\frac{6}{5}\right)^{j+1} B \backslash\left(\frac{6}{5}\right)^{j} B} \theta\left(\frac{d(y, z)}{d\left(c_{B}, w\right)}\right)\left(\frac{d(y, z)}{d\left(c_{B}, w\right)}\right)^{1+l} \frac{|b(w)-b_{B}||f(w)|}{\lambda\left(c_{B}, d(y, w)\right)} d \mu(w) \\
&\lesssim \sum_{j=1}^{\infty}\left(\frac{6}{5}\right)^{-j(1+l)} \theta\left(\frac{1}{(\frac{6}{5})^{j}}\right) \frac{1}{\lambda\left(c_{B},(\frac{6}{5})^{j} r_{B}\right)} \int_{\left(\frac{6}{5}\right)^{j+1} B}\left|b(w)-b_{B} \| f(w)\right| d \mu(w) \\
&\leq \sum_{j=1}^{\infty}\left(\frac{6}{5}\right)^{-j(1+l)} \theta\left(\frac{1}{(\frac{6}{5})^{j}}\right) \frac{1}{\lambda\left(c_{B},(\frac{6}{5})^{j} r_{B}\right)}\left(\left|b_{\left(\frac{6}{5}\right)^{j+1} B}-b_{B}\right| \int_{\left(\frac{6}{5}\right)^{j+1} B}|f(w)| d \mu(w)\right.\\
&\ \ \ +\left.\int_{\left(\frac{6}{5}\right)^{j+1} B}\left|b(w)-b_{\left(\frac{6}{5}\right)^{j+1} B}\right||f(w)| d \mu(w)\right) \\
&\leq \sum_{j=1}^{\infty}\left(\frac{6}{5}\right)^{-j(1+l)} \theta\left(\frac{1}{(\frac{6}{5})^{j}}\right) \frac{1}{\lambda\left(c_{B},(\frac{6}{5})^{j} r_{B}\right)}\left[\frac{1}{\psi((\frac{6}{5})^{j+1} B)}\left|b_{\left(\frac{6}{5}\right)^{j+1} B}-b_{B}\right|\right. \\
&\ \ \ \times \psi\left((\frac{6}{5})^{j+1} B\right)\left(\int_{\left(\frac{6}{5}\right)^{j+1} B}|f(w)|^{p} d \mu(w)\right)^{\frac{1}{p}}\left(\mu\left((\frac{6}{5})^{j+1} B\right)\right)^{1-\frac{1}{p}}+\psi\left((\frac{6}{5})^{j+1} B\right) \\
&\ \ \ \times\left.\left(\int_{\left(\frac{6}{5}\right)^{j+1} B}|f(w)|^{p} d \mu(w)\right)^{\frac{1}{p}} \frac{1}{\psi((\frac{6}{5})^{j+1} B)}\left(\int_{\left(\frac{6}{5}\right)^{j+1} B}\left|b(w)-b_{\left(\frac{6}{5}\right)^{j+1} B}\right|^{p^{\prime}} d \mu(w)\right)^{\frac{1}{p'}}\right] \\
&\lesssim \sum_{j=1}^{\infty}\left(\frac{6}{5}\right)^{-j(1+l)} \theta\left(\frac{1}{(\frac{6}{5})^{j}}\right) \frac{1}{\lambda\left(c_{B},(\frac{6}{5})^{j} r_{B}\right)}\left[j\|b\|_{\widetilde{\mathcal{L}}^{\psi}}\left(\mu\left(5 \times(\frac{6}{5})^{j+1} B\right)\right)^{-\frac{1}{p}}\right. \\
&\ \ \ \times\psi\left((\frac{6}{5})^{j+1} B\right)\left(\int_{\left(\frac{6}{5}\right)^{j+1} B}|f(w)|^{p} d \mu(w)\right)^{\frac{1}{p}}\left(\mu\left(5 \times(\frac{6}{5})^{j+1} B\right)\right)^{\frac{1}{p}}\left(\mu\left((\frac{6}{5})^{j+1} B\right)\right)^{1-\frac{1}{p}} \\
&\ \ \ +\psi\left((\frac{6}{5})^{j+1} B\right)\left(\mu\left(5 \times(\frac{6}{5})^{j+1} B\right)\right)^{-\frac{1}{p}}\left(\int_{\left(\frac{6}{5}\right)^{j+1} B}|f(w)|^{p} d \mu(w)\right)^{\frac{1}{p}} \mu\left(5 \times(\frac{6}{5})^{j+1} B\right) \\
&\ \ \ \times\left.\frac{1}{\psi\left((\frac{6}{5})^{j+1} B\right)}\left(\frac{1}{\mu\left(5 \times(\frac{6}{5})^{j+1} B\right)} \int_{\left(\frac{6}{5}\right)^{j+1} B}\left|b(w)-b_{\left(\frac{6}{5}\right)^{j+1} B}\right|^{p^{\prime}} d \mu(w)\right)^{\frac{1}{p'}}\right] \\
&\lesssim\|b\|_{\widetilde{\mathcal{L}}^{\psi}} M_{\psi, p, 5}(f)(x) \sum_{j=1}^{\infty} j\left(\frac{6}{5}\right)^{-j(1+l)} \theta\left(\frac{1}{(\frac{6}{5})^{j}}\right) \frac{\mu\left(5 \times(\frac{6}{5})^{j+1} B\right)}{\lambda\left(c_{B},(\frac{6}{5})^{j} r_{B}\right)} \\
&\lesssim\|b\|_{\widetilde{\mathcal{L}}^{\psi}} M_{\psi, p, 5}(f)(x) \sum_{j=1}^{\infty}\left(\frac{6}{5}\right)^{-j(1+l)} \int_{\left(\frac{6}{5}\right)^{-j}}^{\left(\frac{6}{5}\right)^{-j+1}} \theta\left(\frac{1}{(\frac{6}{5})^{j}}\right)\left|\log \left(\frac{6}{5}\right)^{-j}\right| \frac{d t}{t} \\
&\lesssim\|b\|_{\widetilde{\mathcal{L}}^{\psi}} M_{\psi, p, 5}(f)(x) \sum_{j=1}^{\infty}\left(\frac{6}{5}\right)^{-j(1+l)} \int_{0}^{1} \frac{\theta(t)}{t} \log\frac{1}{t} d t \\
&\lesssim\|b\|_{\widetilde{\mathcal{L}}^{\psi}} M_{\psi, p, 5}(f)(x),
\end{align*}
by the similar calculation to $E_1$, we also have
\begin{align*}
E_{2}&\lesssim \int_{\left(\frac{6}{5} B\right)^c} \frac{1}{\lambda(z, d(z, w))}|b(w)-b_{B}||f(w)|\left(\left(\frac{d(z, w)}{d(y, w)}\right)^{(l+\rho) s}-1\right)^{\frac{1}{s}} d \mu(w) \\
&\lesssim \int_{\left(\frac{6}{5} B\right)^c} \frac{1}{\lambda(z, d(z, w))}|b(w)-b_{B}|| f(w)|\left(\frac{d(y, z)}{d(y, w)}\right)^{\frac{1}{s}} d \mu(w) \\
&\lesssim\|b\|_{\widetilde{\mathcal{L}}^{\psi}} \sum_{j=1}^{\infty} \frac{\left(\frac{6}{5}\right)^{-\frac{j}{s}}}{\lambda\left(c_{B},(\frac{6}{5})^{j} r_{B}\right)} \psi\left((\frac{6}{5})^{j+1} B\right) \int_{\left(\frac{6}{5}\right)^{j+1} B}|f(w)| d \mu(w) \\
&\ \ \ +\sum_{j=1}^{\infty} \frac{\left(\frac{6}{5}\right)^{-\frac{j}{s}}}{\lambda\left(c_{B},(\frac{6}{5})^{j} r_{B}\right)} \int_{\left(\frac{6}{5}\right)^{j+1} B \backslash\left(\frac{6}{5}\right)^{j} B}\left|b(w)-b_{\frac{6}{5} B}\right||f(w)| d \mu(w) \\
&\leq\|b\|_{\widetilde{\mathcal{L}}^{\psi}} \sum_{j=1}^{\infty} \frac{\left(\frac{6}{5}\right)^{-\frac{j}{s}}}{\lambda\left(c_{B},(\frac{6}{5})^{j} r_{B}\right)} \psi\left((\frac{6}{5})^{j+1} B\right)\left(\int_{\left(\frac{6}{5}\right)^{j+1} B}|f(w)|^{p} d \mu(w)\right)^{\frac{1}{p}} \\
&\ \ \ \times\left(\mu\left((\frac{6}{5})^{j+1} B\right)\right)^{1-\frac{1}{p}}+\sum_{j=1}^{\infty} \frac{\left(\frac{6}{5}\right)^{-\frac{j}{s}}}{\lambda\left(c_{B},(\frac{6}{5})^{j} r_{B}\right)}\left|b_{\frac{6}{5} B}-b_{\left(\frac{6}{5}\right)^{j+1} B}\right| \int_{\left(\frac{6}{5}\right)^{j+1} B}|f(w)| d \mu(w) \\
&\ \ \ +\sum_{j=1}^{\infty} \frac{\left(\frac{6}{5}\right)^{-\frac{j}{s}}}{\lambda\left(c_{B},(\frac{6}{5})^{j} r_{B}\right)} \int_{\left(\frac{6}{5}\right)^{j+1} B}\left|b(w)-b_{\left(\frac{6}{5}\right)^{j+1} B}\right||f(w)| d \mu(w) \\
&\lesssim\|b\|_{\widetilde{M}_{\psi}} M_{\psi, p, 5}(f)(x) \sum_{j=1}^{\infty}\left(\frac{6}{5}\right)^{-\frac{j}{s}} \frac{\mu\left(5 \times(\frac{6}{5})^{j+1} B\right)}{\lambda\left(c_{B},(\frac{6}{5})^{j} r_{B}\right)}\\
&\ \ \ +\|b\|_{\widetilde{\mathcal{L}}^{\psi}}\sum_{j=1}^{\infty} \frac{j\left(\frac{6}{5}\right)^{-\frac{j}{s}}}{\lambda\left(c_{B},(\frac{6}{5})^{j} r_{B}\right)} \psi\left((\frac{6}{5})^{j+1} B\right)\left(\int_{\left(\frac{6}{5}\right)^{j+1} B}|f(w)|^{p} d \mu(w)\right)^{\frac{1}{p}}\left(\mu\left((\frac{6}{5})^{j+1} B\right)\right)^{1-\frac{1}{p}}\\
&\ \ \ +\sum_{j=1}^{\infty} \frac{\left(\frac{6}{5}\right)^{-\frac{2}{s}}}{\lambda\left(c_{B},(\frac{6}{5})^{j} r_{B}\right)} \psi\left((\frac{6}{5})^{j+1} B\right)\left(\frac{1}{\mu\left(5 \times(\frac{6}{5})^{j+1} B\right)} \int_{\left(\frac{6}{5}\right)^{j+1} B}|f(w)|^{p} d \mu(w)\right)^{\frac{1}{p}}\\
&\ \ \ \times\frac{1}{\psi\left((\frac{6}{5})^{j+1} B\right)}\left(\frac{1}{\mu\left(5 \times(\frac{6}{5})^{j+1} B\right)} \int_{\left(\frac{6}{5}\right)^{j+1} B}\left| b(w)-b_{\left(\frac{6}{5}\right)^{j+1}B}\right|^{p'} d \mu(w)\right)^{\frac{1}{p^{\prime}}} \mu\left(5 \times(\frac{6}{5})^{j+1} B\right) \\
&\lesssim\|b\|_{\widetilde{\mathcal{L}}^{\psi}} M_{\psi, p, 5}(f)(x) \sum_{j=1}^{\infty} j\left(\frac{6}{5}\right)^{-\frac{j}{s}} \frac{\mu\left(5 \times(\frac{6}{5})^{j+1} B\right)}{\lambda\left(c_{B},(\frac{6}{5})^{j} r_{B}\right)} \\
&\lesssim\|b\|_{\widetilde{\mathcal{L}}^{\psi}} M_{\psi, p, 5}(f)(x),
\end{align*}
and,
\begin{align*}
E_{3} &\lesssim \int_{\left(\frac{6}{5} B\right)^c} \frac{1}{\lambda(z, d(z, w))} \frac{d(y, z)}{d(z, w)}|b(w)-b_{B}||f(w)| d \mu(w) \\
& \lesssim\|b\|_{\widetilde{\mathcal{L}}^{\psi}} M_{\psi, p, 5}(f)(x) \sum_{j=1}^{\infty} j\left(\frac{6}{5}\right)^{-j} \frac{\mu\left(5 \times(\frac{6}{5})^{j+1} B\right)}{\lambda\left(c_{B},(\frac{6}{5})^{j} r_{B}\right)} \\
& \lesssim\|b\|_{\widetilde{\mathcal{L}}^{\psi}} M_{\psi, p, 5}(f)(x),
\end{align*}
which, together with above estimates for $D_{1}$ and $D_{2}$, imply (8).\\
Then we show (9). Let $N_0:=N_{B, S}^{(6)}+1$, we have
\begin{align*}
|m_{B}-m_{S}|&\leq\left|m_{B}\left(\widetilde{\mathcal{M}}_{l, \rho, s}((b-b_{B}) f \chi_{(6^{N_0}B)^c})\right)-m_{S}\left(\widetilde{\mathcal{M}}_{l, \rho, s}((b-b_{B}) f \chi_{(6^{N_0}B)^c})\right)\right| \\
&\ \ \ +\left|m_{S}\left(\widetilde{\mathcal{M}}_{l, \rho, s}((b-b_{S}) f \chi_{(6^{N_0}B)^c})\right)-m_{S}\left(\widetilde{\mathcal{M}}_{l, \rho, s}((b-b_{B}) f \chi_{(6^{N_0}B)^c})\right)\right| \\
&\ \ \ +\left|m_{B}\left(\widetilde{\mathcal{M}}_{l, \rho, s}((b-b_{B}) f \chi_{(6^{N_0}B)\backslash\left(\frac{6}{5} B\right)})\right)\right|+\left|m_{S}\left(\widetilde{\mathcal{M}}_{l, \rho, s}((b-b_{S}) f \chi_{(6^{N_0}B)\backslash\left(\frac{6}{5} S\right)})\right)\right| \\
&=: F_{1}+F_{2}+F_{3}+F_{4}.
\end{align*}
By a similar argument to $D_{3}$,
$$F_{1} \lesssim \|b\|_{\widetilde{\mathcal{L}}^{\psi}}M_{\psi, p, 5}(f)(x),$$
and
$$F_{2} \lesssim  \widetilde{K}_{B, S}^{(6)}\|b\|_{\widetilde{\mathcal{L}}^{\psi}}M_{\psi, p, 6}\left(\widetilde{\mathcal{M}}_{l, \rho, s}(f)\right)(x).$$
For $y \in B$, by the Minkowski inequality, Lemma 2.5, the H\"{o}lder inequality and Corollary 3.1,
\begin{align*}
G&:=\widetilde{\mathcal{M}}_{l, \rho, s}\left((b-b_{B}) f \chi_{(6^{N_0}B)\backslash\left(\frac{6}{5} B\right)}(y)\right)\\
&=\left(\int_{0}^{+\infty}\left|\int_{d(y, w) \leq t} (b(w)-b_{B})\frac{K_{l, \theta}(y, w)}{(d(y, w))^{1-\rho}} f(w) \chi_{(6^{N_0}B)\backslash\left(\frac{6}{5} B\right)}(w) d \mu(w)\right|^{s} \frac{d t}{t^{1+(l+\rho) s}}\right)^{\frac{1}{s}} \\
&\leq \int_{(6^{N_0}B)\backslash\left(\frac{6}{5} B\right)} |b(w)-b_{B}|\frac{\left|K_{l, \theta}(y, w)\right|}{(d(y, w))^{1-\rho}}|f(w)|\left(\int_{d(y, w)}^{+\infty} \frac{d t}{t^{1+(l+\rho) s}}\right)^{\frac{1}{s}} d \mu(w) \\
&\lesssim \int_{(6^{N_0}B)\backslash\left(\frac{6}{5} B\right)} \frac{1}{\lambda(y, d(y, w))}|b(w)-b_{B}||f(w)| d \mu(w) \\
&\lesssim \sum_{j=1}^{N_0-1} \frac{1}{\lambda(c_{B}, 6^{j} r_{B})} \int_{6^{j+1} B}|b(w)-b_{B}|| f(w)| d \mu(w)+\frac{1}{\lambda(c_{B}, \frac{6}{5} r_{B})} \int_{6 B}|b(w)-b_{B}||f(w)| d \mu(w) \\
&\leq \sum_{j=1}^{N_0-1} \frac{1}{\lambda(c_{B}, 6^{j} r_{B})}\left(|b_{6^{j+1} B}-b_{B}| \int_{6^{j+1} B}|f(w)| d \mu(w)+\int_{6^{j+1} B}|b(w)-b_{6^{j+1} B}||f(w)| d \mu(w)\right) \\
&\ \ \ +\frac{1}{\lambda(c_{B}, \frac{6}{5} r_{B})}\left(|b_{6 B}-b_{B}| \int_{6 B}|f(w)| d \mu(w)+\int_{6 B}|b(w)-b_{6 B}||f(w)| d \mu(w)\right) \\
&\lesssim \sum_{j=1}^{N_0-1} \frac{1}{\lambda(c_{B}, 6^{j} r_{B})}\left[j\|b\|_{\widetilde{\mathcal{L}}^{\psi}}\psi(6^{j+1} B)\left(\int_{6^{j+1} B}|f(w)|^{p} d \mu(w)\right)^{\frac{1}{p}}(\mu(6^{j+1} B))^{1-\frac{1}{p}}\right. \\
&\ \ \ +\left.\left(\int_{6^{j+1} B}|f(w)|^{p} d \mu(w)\right)^{\frac{1}{p}}\left(\int_{6^{j+1} B}|b(w)-b_{6^{j+1} B}|^{p^{\prime}} d \mu(w)\right)^{\frac{1}{p'}}\right] \\
&\ \ \ +\frac{1}{\lambda(c_{B}, \frac{6}{5} r_{B})}\left[\|b\|_{\widetilde{\mathcal{L}}^{\psi}} \psi(6 B)\left(\int_{6 B}|f(w)|^{p} d \mu(w)\right)^{\frac{1}{p}}(\mu(6 B))^{1-\frac{1}{p}}\right. \\
&\ \ \ +\left.\left(\int_{6 B}|f(w)|^{p} d \mu(w)\right)^{\frac{1}{p}}\left(\int_{6 B}|b(w)-b_{6 B}|^{p^{\prime}} d \mu(w)\right)^{\frac{1}{p^{\prime}}}\right] \\
&\lesssim\|b\|_{\widetilde{\mathcal{L}}^{\psi}} M_{\psi, p, 5}(f)(x) \sum_{j=1}^{N_0-1} \frac{\mu(6^{j+1} B)}{\lambda(c_{B}, 6^{j} r_{B})}+\|b\|_{\widetilde{\mathcal{L}}^{\psi}} M_{\psi, p, 5}(f)(x) \frac{\mu(6 B)}{\lambda(c_{B}, \frac{6}{5} r_{B})} \\
&\lesssim\widetilde{K}_{B, S}^{(6)}\|b\|_{\widetilde{\mathcal{L}}^{\psi}} M_{\psi, p, 5}(f)(x),
\end{align*}
taking the mean over ball $B$, there exists
$$F_{3} \lesssim\widetilde{K}_{B, S}^{(6)}\|b\|_{\widetilde{\mathcal{L}}^{\psi}} M_{\psi, p, 5}(f)(x).$$
By a similar argument to $F_{3}$,
$$F_{4} \lesssim\widetilde{K}_{B, S}^{(6)}\|b\|_{\widetilde{\mathcal{L}}^{\psi}} M_{\psi, p, 5}(f)(x),$$
which combining above estimates with $F_{1}, F_{2}$ and $F_{3}$, yields (9).
\end{proof}

\begin{theorem}
Let $1<p\leq q<\infty$, $0<\delta<1$, $\phi \in \mathcal{G}_{\delta}^{\text {dec }}$, $\psi$ satisfy
\begin{equation}
\psi(B) \phi(B)^{\frac{1}{p}} \lesssim \phi(B)^{\frac{1}{q}}
\end{equation}
for all balls $B\subset\mathcal{X}$, then $M_{\psi, p, \tau}$ is bounded from $L^{p, \phi}$ to $L^{q, \phi}$.
\end{theorem}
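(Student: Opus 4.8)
The plan is to fix a ball $B_0=B(c_{B_0},r_{B_0})$ and to estimate $\frac{1}{\phi(B_0)\mu(\eta_2 B_0)}\int_{B_0}\big(M_{\psi,p,\tau}f\big)^q\,d\mu$ uniformly in $B_0$. By the $\eta$-independence of the Morrey norm (Remark 2.4) both the input norm $\|f\|_{L^{p,\phi}}$ and the output norm may be computed with whatever dilation parameters are convenient; I would use $\eta_1=\tau$ for the input and $\eta_2=3\tau$ for the output. Since $M_{\psi,p,\tau}$ is sublinear, I split the supremum defining $M_{\psi,p,\tau}f(x)$ according to whether the competing ball $B\ni x$ has $r_B\le r_{B_0}$ (local part $A(x)$) or $r_B>r_{B_0}$ (global part $G(x)$), so that $M_{\psi,p,\tau}f(x)\le A(x)+G(x)$, and I treat the two ranges separately.

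For the local part I would use a Hedberg-type balancing. If $x\in B_0$ and $r_B\le r_{B_0}$ then $B\subset 3B_0$, so only $f_1:=f\chi_{3B_0}$ matters, and for such $B$ there are two bounds: the Morrey bound $\psi(B)\big(\frac{1}{\mu(\tau B)}\int_B|f_1|^p\big)^{1/p}\le\psi(B)\phi(B)^{1/p}\|f\|_{L^{p,\phi}}\lesssim\phi(B)^{1/q}\|f\|_{L^{p,\phi}}$ (via condition (10)), and the maximal bound $\psi(B)\big(\frac{1}{\mu(\tau B)}\int_B|f_1|^p\big)^{1/p}\le\phi(B)^{1/q-1/p}M_{p,\tau}f_1(x)$ (again by (10)). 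Writing $t=\phi(B)$, the quantity $A(x)$ is at most the supremum of the minimum of the increasing function $t^{1/q}\|f\|_{L^{p,\phi}}$ and the decreasing function $t^{1/q-1/p}M_{p,\tau}f_1(x)$; this minimum is maximised at their crossing point, giving $A(x)\lesssim\big(M_{p,\tau}f_1(x)\big)^{p/q}\|f\|_{L^{p,\phi}}^{1-p/q}$. Raising to the power $q$, integrating over $B_0$, and invoking the $L^p$-boundedness of $M_{p,\tau}$ (Lemma 2.3) together with $\int_{3B_0}|f|^p\le\phi(3B_0)\mu(3\tau B_0)\|f\|_{L^{p,\phi}}^p$ and the concentric monotonicity $\phi(3B_0)\lesssim\phi(B_0)$ (a consequence of the first defining inequality of $\mathcal{G}_\delta^{dec}$), I expect $\big(\frac{1}{\phi(B_0)\mu(3\tau B_0)}\int_{B_0}A^q\big)^{1/q}\lesssim\|f\|_{L^{p,\phi}}$.

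For the global part the essential point is that a supremum over balls of arbitrary centre must be reduced to concentric dilations of $B_0$, since $\mathcal{G}_\delta^{dec}$ gives no comparability of $\phi$ under translation of the centre. For $x\in B_0$ and $r_B>r_{B_0}$ one has $B\subset a^{k}B_0$ with $r_B\approx a^{k}r_{B_0}$ for an appropriate scale $k$ and fixed base $a$; here I would use condition (5) to replace $\psi(B)$ by $\psi(a^{k}B_0)$, the hypothesis $\tau\ge5$ (Definition 2.15) to secure the concentric inclusion $a^{k}B_0\subset\tau B$ (so that $\mu(\tau B)$ is bounded below by the measure of a concentric dilate, the remaining bounded dilation factors being absorbed by the upper doubling of $\mu$), and the Morrey bound on the concentric ball. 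Condition (10) then converts $\psi(a^{k}B_0)\phi(a^{k}B_0)^{1/p}$ into $\phi(a^{k}B_0)^{1/q}$, and since $\phi$ is decreasing along concentric dilates the supremum over $k$ is controlled by $\phi(B_0)^{1/q}$. This yields the $x$-uniform pointwise bound $G(x)\lesssim\phi(B_0)^{1/q}\|f\|_{L^{p,\phi}}$, whence $\big(\frac{1}{\phi(B_0)\mu(3\tau B_0)}\int_{B_0}G^q\big)^{1/q}\lesssim\|f\|_{L^{p,\phi}}\big(\mu(B_0)/\mu(3\tau B_0)\big)^{1/q}\le\|f\|_{L^{p,\phi}}$. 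Combining the two ranges and taking the supremum over $B_0$ gives the claim.

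The main obstacle I anticipate is precisely this global reduction: because the ambient measure is non-doubling and $\phi\in\mathcal{G}_\delta^{dec}$ need not be comparable under translation of the centre, one cannot dominate $\phi(B)$ for a far ball $B$ by $\phi(B_0)$ directly. The delicate bookkeeping is to show that the supremum over all large balls containing $x$ is dominated, uniformly in $x\in B_0$, by a supremum over concentric dilations of $B_0$ — exactly where $\tau\ge5$ and the two-sided comparability (5) of $\psi$ enter, with the finitely many small scales handled by hand through the upper-doubling constant. The local part is comparatively routine once the Hedberg balancing is in place; the one point to watch is that the whole fractional gain from $p$ to $q$ must come from condition (10), not from any integrability of $f$ beyond what $\|f\|_{L^{p,\phi}}$ provides.
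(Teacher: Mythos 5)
Your proposal has a genuine gap in the global part, and it sits exactly where you yourself flagged the danger. To reduce a large ball $B=B(y,r)\ni x$ to a concentric dilate $a^kB_0$ you need the upper bound $\psi(B)\lesssim\psi(a^kB_0)$, where $r_{a^kB_0}\approx 3r>r$. But condition (5) is one-sided in precisely the wrong direction: it gives $\psi(2B)\leq C\psi(B)$ (the \emph{larger} concentric ball controlled by the smaller) plus two-sided comparability only for balls of \emph{equal} radius with nearby centres. Chaining these yields $\psi(a^kB_0)\lesssim\psi(B)$, never the reverse; nothing in (5) forbids $\psi(x,r)=e^{-r}$, say, for which $\psi(B(y,r))/\psi(B(x_0,3r))\approx e^{2r}$ is unbounded. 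The fallback of applying (10) at $B$ itself and then claiming $\phi(B)\lesssim\phi(B_0)$ fails for the same reason: $\mathcal{G}_{\delta}^{dec}$ gives $\phi(B_2)\lesssim\phi(B_1)$ for $B_1\subset B_2$ (small controls big), while your competitor ball $B$ is neither contained in nor comparable from above by $B_0$ — the second defining inequality only bounds $\phi(B)$ by $\phi(3B)\,\mu(3\eta B)/\mu(\eta B)$, and that measure ratio is unbounded for non-doubling $\mu$. So the pointwise bound $G(x)\lesssim\phi(B_0)^{1/q}\|f\|_{L^{p,\phi}}$ is not established by the tools you invoke. (There is also a borderline failure of your inclusion bookkeeping at $\tau=5$, where $B\subset a^kB_0$ and $\eta_1 a^kB_0\subset\tau B$ cannot hold simultaneously with $\eta_1>1$, but that is minor by comparison.)

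The paper's proof shows the local/global split is unnecessary for the $\psi$-weighted operator: your two bounds (a) and (b) and the crossing-point minimisation are valid for \emph{every} ball $B\ni x$, not just those with $r_B\leq r_{B_0}$, because neither bound compares $\psi$ or $\phi$ across different balls. Concretely, the paper normalises $\|f\|_{L^{p,\phi}}=1$, fixes $u$ with $\phi(c_B,u)=M_{p,\tau}(f)(x)^p$ (using the continuity and strict monotonicity from Remark 2.3 and (4)), and argues per ball: if $\phi(B)\geq M_{p,\tau}(f)(x)^p$, use (10) as $\psi(B)\lesssim\phi(B)^{1/q-1/p}$ together with the trivial bound $(\frac{1}{\mu(\tau B)}\int_B|f|^p)^{1/p}\leq M_{p,\tau}(f)(x)$; if $\phi(B)<M_{p,\tau}(f)(x)^p$, use $\psi(B)\phi(B)^{1/p}\lesssim\phi(B)^{1/q}$ together with the Morrey bound. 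This yields the pointwise estimate (11), $M_{\psi,p,\tau}(f)(x)\lesssim M_{p,\tau}(f)(x)^{p/q}$, uniformly over all scales, with no inter-ball transfer of $\psi$ or $\phi$ whatsoever — which is exactly what defeats your obstruction. After (11), the only remaining task is Morrey boundedness of the $\psi$-free operator $M_{p,\tau}$, and \emph{there} your localisation machinery is the right tool: split $f=f\chi_{3B_0}+f\chi_{(3B_0)^c}$, handle the local piece by Lemma 2.3 and $\phi(3B_0)\lesssim\phi(B_0)$, and for the tail reduce to a ball $B'$ concentric with $B_0$ containing $B$, which now only needs $\int_B\leq\int_{B'}$, $\mu(\eta B')\leq\mu(\tau B)$, and $\phi(B')\lesssim\phi(B_0)$ with $B_0\subset B'$ — the benign direction of the first $\mathcal{G}_{\delta}^{dec}$ inequality, with no $\psi$ in sight. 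Indeed the paper is itself terse at this point (it writes $\|M_{p,\tau}f\|_{L^p(B)}\lesssim\|f\|_{L^p(B)}$ citing only the $L^p$ bound of Lemma 2.3, which as a restricted inequality is not literally what that lemma gives), so your instinct to localise responds to a real need — it just must be deployed on $M_{p,\tau}$ after the pointwise reduction, not on $M_{\psi,p,\tau}$ itself.
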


\begin{proof}
Without the loss of generality, assume $\|f\|_{L^{p, \phi}}=1$, we firstly prove
\begin{equation}
M_{\psi, p, \tau}(f)(x) \lesssim M_{p, \tau}(f)(x)^{\frac{p}{q}}.
\end{equation}
That is, for any $B=B(y, r)\ni x$,
\[\psi(B)\left(\frac{1}{\mu(\tau B)} \int_{B}|f(y)|^{p} d \mu(y)\right)^{\frac{1}{p}} \lesssim M_{p, \tau}(f)(x)^{\frac{p}{q}} .\]
Fix $u>0$ which makes $\phi(y, u)=M_{p, \tau}(f)(x)^{p}$. If $u\geq r$, $\phi(B)=\phi(y, r) \geq M_{p, \tau}(f)(x)^{p}$, and $\phi(B)^{\frac{1}{q}-\frac{1}{p}} \leq M_{p, \tau}(f)(x)^{\frac{p}{q}-1}$. By (10),
\[\psi(B)\left(\frac{1}{\mu(\tau B)} \int_{B}|f(y)|^{p} d \mu(y)\right)^{\frac{1}{p}} \lesssim \phi(B)^{\frac{1}{q}-\frac{1}{p}} M_{p, \tau}(f)(x) \leq M_{p, \tau}(f)(x)^{\frac{p}{q}} .\]
If $u<r$, $\phi(B)=\phi(y, r) \leq M_{p, \tau}(f)(x)^{p}$, and $\phi(B)^{\frac{1}{q}} \leq M_{p, \tau}(f)(x)^{\frac{p}{q}}$. By $\|f\|_{L^{p, \phi}(\mu)}=1$ and (10),
\begin{align*}
\psi(B)\left(\frac{1}{\mu(\tau B)}\int_{B}|f(y)|^{p} d \mu(y)\right)^{\frac{1}{p}}&=\psi(B) \phi(B)^{\frac{1}{p}}\left(\frac{1}{\phi(B) \mu(\tau B)} \int_{B}|f(y)|^{p} d \mu(y)\right)^{\frac{1}{p}} \\
&\leq \psi(B) \phi(B)^{\frac{1}{p}}\lesssim \phi(B)^{\frac{1}{q}}\leq M_{p, \tau}(f)(x)^{\frac{p}{q}}.
\end{align*}
Then, by (11) and Lemma 2.3,
\begin{align*}
\left\|M_{\psi, p, \tau}(f)\right\|_{L^{q, \phi}}&=\sup _{B} \phi(B)^{-\frac{1}{q}} \mu(\tau B)^{-\frac{1}{q}}\left\|M_{\psi, p, \tau}(f)\right\|_{L^{q}(B)}\\
&\lesssim \sup _{B} \phi(B)^{-\frac{1}{q}} \mu(\tau B)^{-\frac{1}{q}}\left\|\left(M_{p, \tau}(f)\right)^{\frac{p}{q}}\right\|_{L^{q}(B)} \\
&=\sup _{B} \phi(B)^{-\frac{1}{q}} \mu(\tau B)^{-\frac{1}{q}}\left\|M_{p, \tau}(f)\right\|_{L^{p}(B)}^{\frac{p}{q}} \\
&\lesssim \sup _{B} \phi(B)^{-\frac{1}{q}} \mu(\tau B)^{-\frac{1}{q}}\|f\|_{L^{p}(B)}^{\frac{p}{q}} \\
&=\left(\sup _{B} \phi(B)^{-\frac{1}{p}} \mu(\tau B)^{-\frac{1}{p}}\|f\|_{L^{p}(B)}\right)^{\frac{p}{q}} \\
&=\|f\|_{L^{p, \phi}}^{\frac{p}{q}}=1,
\end{align*}
which completes the proof.
\end{proof}

Finally, we obtain the boundedness of commutator $\widetilde{\mathcal{M}}_{l, \rho, s, b}$ for $b\in\widetilde{\mathcal{L}}^{\psi}$. This result is even new when $\psi(x,r)=\lambda(x,r)^{\alpha}$ for $\alpha\geq0$, in which case $b$ is in Campanato space, see Remark 2.5.

\begin{theorem}
Let $1<p\leq q<\infty$, $0<\delta<1$, $\phi \in \mathcal{G}_{\delta}^{\text {dec }}$, $\psi$ satisfy (10), $b \in\widetilde{\mathcal{L}}^{\psi}$, $\mu\in\mathcal{D}_{\tau}$, $\lambda\in\mathcal{R}_{\sigma}$ for some $\sigma \in(0, \delta / p)$, and $T_{\lambda}$ be bounded on $L^{2}$, then  for $f\in L^{p,\phi}$,
$$\left\|\widetilde{\mathcal{M}}_{l, \rho, s, b}(f)\right\|_{L^{q, \phi}}\lesssim\|b\|_{\widetilde{\mathcal{L}}^{\psi}}\|f\|_{L^{p, \phi}}.$$
\end{theorem}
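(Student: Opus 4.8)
The plan is to obtain the commutator bound by combining the pointwise sharp maximal estimate of Theorem 4.2 with the Morrey mapping property of $M_{\psi,p,\tau}$ from Theorem 4.3 and the $L^{p,\phi}$ boundedness of $\widetilde{\mathcal{M}}_{l,\rho,s}$ from Theorem 4.1. Write $h:=\widetilde{\mathcal{M}}_{l,\rho,s,b}(f)$. Since $h\in L^1_{loc}$, a Lebesgue differentiation argument along doubling balls gives $|h(x)|\le Nh(x)$ for $\mu$-a.e.\ $x$, so that
$$\|h\|_{L^{q,\phi}}\le\|Nh\|_{L^{q,\phi}}.$$
Thus it suffices to bound $\|Nh\|_{L^{q,\phi}}$, and the whole estimate will follow once I can pass from $N$ to the sharp maximal operator $\widetilde{M}^{\sharp}$.

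The central step, and the one I expect to be the main obstacle, is a Morrey-space upgrade of the sharp maximal inequality of Lemma 2.4, namely
$$\|Nh\|_{L^{q,\phi}}\lesssim\bigl\|\widetilde{M}^{\sharp}h\bigr\|_{L^{q,\phi}}.$$
The plan is to prove this by localization: fixing a ball $B_0$, split $h=h\chi_{cB_0}+h\chi_{(cB_0)^c}$ for a suitable dilation constant $c$, apply the $L^q$ inequality of Lemma 2.4 to the local piece (after verifying its hypothesis $\inf\{1,Nh\}\in L^q$), and absorb the contribution of the averages over large doubling balls into $\widetilde{M}^{\sharp}h$ by means of the coefficient estimates in Lemma 2.1. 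To keep every quantity finite and to legitimize Lemma 2.4, I would first argue for $f\in L^\infty_c$, checking that $Nh\in L^{q,\phi}$ (and that $\int_{\mathcal{X}}h\,d\mu=0$ when $\mu(\mathcal{X})<\infty$), and then reach general $f\in L^{p,\phi}$ by density together with the resulting a priori bound.

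Granting this inequality, I would insert the pointwise bound of Theorem 4.2 and use that the $L^{q,\phi}$ norm is monotone under pointwise domination of nonnegative functions to get
$$\bigl\|\widetilde{M}^{\sharp}h\bigr\|_{L^{q,\phi}}\lesssim\|b\|_{\widetilde{\mathcal{L}}^{\psi}}\Bigl(\bigl\|M_{\psi,p,5}(f)\bigr\|_{L^{q,\phi}}+\bigl\|M_{\psi,p,6}(\widetilde{\mathcal{M}}_{l,\rho,s}(f))\bigr\|_{L^{q,\phi}}\Bigr).$$
For the first term, Theorem 4.3 with $\tau=5$ gives $\|M_{\psi,p,5}(f)\|_{L^{q,\phi}}\lesssim\|f\|_{L^{p,\phi}}$; for the second, Theorem 4.3 with $\tau=6$ followed by Theorem 4.1 gives
$$\bigl\|M_{\psi,p,6}(\widetilde{\mathcal{M}}_{l,\rho,s}(f))\bigr\|_{L^{q,\phi}}\lesssim\bigl\|\widetilde{\mathcal{M}}_{l,\rho,s}(f)\bigr\|_{L^{p,\phi}}\lesssim\|f\|_{L^{p,\phi}}.$$
Chaining the four displays yields $\|\widetilde{\mathcal{M}}_{l,\rho,s,b}(f)\|_{L^{q,\phi}}\lesssim\|b\|_{\widetilde{\mathcal{L}}^{\psi}}\|f\|_{L^{p,\phi}}$. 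Note that the gain of integrability from $p$ to $q$ is produced entirely by Theorem 4.3 through hypothesis (10), while the sharp-maximal reduction is carried out at the single exponent $q$.
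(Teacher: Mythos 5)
Your proposal follows essentially the same route as the paper's proof: the pointwise domination $|h|\leq Nh$ $\mu$-a.e., passage from $N$ to $\widetilde{M}^{\sharp}$ via Lemma 2.4, insertion of the pointwise estimate of Theorem 4.2, and then Theorem 4.3 (with $\tau=5$ and $\tau=6$) combined with Theorem 4.1 — this is exactly the paper's final chain of inequalities. Where you diverge is instructive: the paper applies Lemma 2.4 \emph{inside} the supremum over balls, in effect asserting $\|Nh\|_{L^q(B)}\lesssim\|\widetilde{M}^{\sharp}h\|_{L^q(B)}$ ball by ball without comment, whereas you correctly observe that Lemma 2.4 is a global $L^q$ inequality and that the Morrey-norm version $\|Nh\|_{L^{q,\phi}}\lesssim\|\widetilde{M}^{\sharp}h\|_{L^{q,\phi}}$ needs a separate localization argument. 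Your sketch is the standard way to supply it, but note the crux you still owe: $\widetilde{M}^{\sharp}(h\chi_{cB_0})$ is \emph{not} pointwise dominated by $\widetilde{M}^{\sharp}h$, since the oscillation of the truncation over balls straddling the boundary of $cB_0$ degenerates into plain averages of $|h|$; these tail averages must then be telescoped back to $\widetilde{M}^{\sharp}h$ using the $\widetilde{K}$-coefficient estimates of Lemma 2.1 together with the decay $\lim_{r\to\infty}\phi(x,r)=0$ from Definition 2.6. As a proposal this is the right idea, and it is more careful than the paper's own treatment of this step.

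One genuine misstep in your plan: $L^{\infty}_{c}$ is \emph{not} dense in the Morrey space $L^{p,\phi}$ (its closure is a proper subspace), so the final reduction ``by density together with the a priori bound'' fails as stated — a uniform bound on a non-dense subspace does not extend by continuity. The repair is routine but different in kind: approximate $f\in L^{p,\phi}$ pointwise by truncations $f_n=f\chi_{B(x_0,n)}\chi_{\{|f|\leq n\}}$, note $\|f_n\|_{L^{p,\phi}}\leq\|f\|_{L^{p,\phi}}$, pass to the limit in $\widetilde{\mathcal{M}}_{l,\rho,s,b}(f_n)(x)$ by dominated convergence for a.e.\ $x$, and conclude with a Fatou-type argument in the $L^{q,\phi}$ norm. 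With that substitution, and the localization lemma actually carried out, your argument closes and coincides in substance with the paper's.
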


\begin{proof}
By Lemma 2.4, Theorem 4.2, Theorem 4.3 and Theorem 4.1,
\begin{align*}
&\left\|\widetilde{\mathcal{M}}_{l, \rho, s, b}(f)\right\|_{L^{q, \phi}}=\sup_B\phi(B)^{-\frac{1}{q}}\mu(\eta B)^{-\frac{1}{q}}\left\|\widetilde{\mathcal{M}}_{l, \rho, s, b}(f)\right\|_{L^q(B)}\\
&\ \ \ \ \lesssim\sup_B\phi(B)^{-\frac{1}{q}}\mu(\eta B)^{-\frac{1}{q}}\left\|N(\widetilde{\mathcal{M}}_{l, \rho, s, b}(f))\right\|_{L^q(B)}\\
&\ \ \ \ \lesssim\sup_B\phi(B)^{-\frac{1}{q}}\mu(\eta B)^{-\frac{1}{q}}\left\|\widetilde{M}^{\sharp}(\widetilde{\mathcal{M}}_{l, \rho, s, b}(f))\right\|_{L^q(B)}\\
&\ \ \ \ \lesssim\|b\|_{\widetilde{\mathcal{L}}^{\psi}}\sup_B\phi(B)^{-\frac{1}{q}}\mu(\eta B)^{-\frac{1}{q}}\left(\|M_{\psi,p,5}(f)\|_{L^q(B)}+\|M_{\psi,p,6}(\widetilde{\mathcal{M}}_{l,\rho,s}(f))\|_{L^q(B)}\right)\\
&\ \ \ \ \leq\|b\|_{\widetilde{\mathcal{L}}^{\psi}}\left(\|M_{\psi,p,5}(f)\|_{L^{q,\phi}}+\|M_{\psi,p,6}(\widetilde{\mathcal{M}}_{l,\rho,s}(f))\|_{L^{q,\phi}}\right)\\
&\ \ \ \ \lesssim\|b\|_{\widetilde{\mathcal{L}}^{\psi}}\left(\|f\|_{L^{p, \phi}}+\left\|\widetilde{\mathcal{M}}_{l, \rho, s}(f)\right\|_{L^{p, \phi}}\right)\lesssim\|b\|_{\widetilde{\mathcal{L}}^{\psi}}\|f\|_{L^{p, \phi}},
\end{align*}
which completes the proof.
\end{proof}

\bigskip \medskip\noindent
\textbf{\bf Acknowledgments}\\
The authors thank the referees for their careful reading and helpful comments which indeed improved the presentation of this article.\\
\textbf{\bf Funding information}\\
The research was supported by National Natural Science Foundation of China (Grant No. 12061069).\\
\textbf{\bf Authors contributions}\\
All authors have accepted responsibility for the entire content of this manuscript and approved its submission.\\
\textbf{\bf Conflict of interest}\\
Authors state no conflict of interest.
\bigskip \medskip

\noindent Yuxun Zhang and Jiang Zhou\\
\medskip
\noindent
College of Mathematics and System Sciences, Xinjiang University, Urumqi 830046\\
\smallskip
\noindent{E-mail }:
\texttt{zhangyuxun64@163.com} (Yuxun Zhang);
\texttt{zhoujiang@xju.edu.cn} (Jiang Zhou)

\bibliographystyle{plain}
\bibliography{Reference.bib}

\end{document}